
\documentclass[12pt]{amsart}
\usepackage[margin=1in]{geometry}
\usepackage{comment}
\usepackage{mathrsfs}
\usepackage{latexsym}
\usepackage{float}
\usepackage{amssymb}
\usepackage[all]{xy}
\usepackage{epsfig}
\usepackage{color}
\usepackage{graphics}
\usepackage{hyperref}

\newtheorem{Thm}[equation]{Theorem}
\newtheorem{Lem}[equation]{Lemma}
\newtheorem{Cor}[equation]{Corollary}

\begin{document}
\baselineskip=12pt

\title{A classification of rational 3-tangles }
\author{Bo-hyun Kwon}
\date{Friday, May 23, 2025}
\maketitle
\begin{abstract} 

 In this paper, we define the \textit{normal form} and \textit{normal coordinate} of  a rational 3-tangle $T$ with respect to $\partial E_1$, where $E_1$ is the fixed two punctured disk in $\Sigma_{0,6}$. Among all normal coordinates of $T$  with respect to $\partial E_1$, we  investigate the collection of \textit{minimal} normal coordinates of $T$.
We show that the simplicial complex constructed with normal forms of the rational 3-tangle is contractible. As an effectiveness of  the contractibility of the simplicial complex by normal forms of $T$, we would choose  a minimal normal coordinate of $T$  with a certain rule for the representative for the rational $3$-tangle $T$. This classifies rational $3$-tangles up to isotopy.
\end{abstract}

\section{Introduction}

J. Conway~\cite{0} developed Conway's Theorem that gives a perfect classification of rational $2$-tangle with the correspondence between rational 2-tangles and rational numbers. It states that \textit{two rational (2-)tangles
are ambient isotopic if and only if their fractions are equal.} The definition of the \textit{rational} tangles satisfied a certain property among $2$-tangles does make sense due to the correspondence with rational numbers. However, unfortunately, we were not able to find a certain meaningful relation between rational $n$-tangles and rational numbers if $n\geq 3$ so far. Some topologists call  the rational tangles as \textit{trivial} tangles although trivial  in  knot diagrams means that there is no crossing up to isotopy.
Over 50 years, we were not able to extend the classification of rational $2$-tangles to a classification of rational $n$-tangles($n>2$) properly, especially rational $3$-tangles in advance. Cabrera~\cite{2}  found a pair of invariants which is defined for all rational $3$-tangles and it classifies a special set of rational $3$-tangles containing the $3$-braid group. The author~\cite{1} gave an algorithm to distinguish arbitrary two rational $3$-tangles. Currently, we defined an \textit{arc systems} of a rational $3$-tangle $T$ and made a nice operator called \textit{bridge arc replacement} which preserve the rational tangle $T$ but change the arc system of $T$. This operator is very useful tool to detect the trivial rational $3$-tangle called $\infty$ tangle. In the argument for the algorithm we developed in~\cite{1}, the main difficulty to make the algorithm was  how to check whether or not the given rational $3$-tangle is the $\infty$ tangle. The result by the bridge arc replacement gives much easier criterion to detect $\infty$ tangle. The author~\cite{3} defined  \textit{normal forms} for rational $3$-tangles. We also showed that there exists a sequence of movements called \textit{normal jump moves} that leads an arbitrary normal form to another arbitrary normal form. In this paper, we show that we can select a representative for normal forms of a given rational $3$-tangle by regarding minimality of \textit{weights}. Here, weights mean the numbers of intersections with the three simple closed curves for the standard formation to define normal forms. In order to show this, we define a \textit{complex of normal forms}
 of a rational $3$-tangle in Section~\ref{s4}.
We show that the complex of normal forms of a rational $3$-tangle is contractible in Section~\ref{s5}. Among all of normal forms of a rational $3$-tangle, we first consider normal forms with minimal intersections in terms of  $\partial E$ which consists of the disjoint non-parallel three simple closed curves enclosing two punctures respectively. In Section~\ref{s6}, we discuss a specific regulation to choose one of them for the representative  to have our classification.

\section{Arc system of a rational 3-tangle}

\subsection{Rational n-tangle and equivalence of  rational $3$-tangles}

Let $\tau=\tau_1\cup\tau_2\cup\cdot\cdot\cdot\cup \tau_n$ be a set of $n$ pairwise disjoint arcs $\tau_i$ properly embedded in a $3$-ball $B$. Then, the pair $(B,\tau)$ is called an $n$-tangle. An $n$-tangle is \textit{rational} if there exists a homeomorphism of pairs $H$ from $(B,\tau)$ to $(D^2\times I, \{p_1,p_2,...,p_n\}\times I)$, where $D^2$ is a $2$-dimensional unit disk, $I$ is a unit interval and $p_i\in$ int~$D^2$ for all $i\in\{1,2,...,n\}$. In order to classify rational $n$-tangles, we would fix the $2n$ endpoints of the $n$ arcs. We say that two rational $n$-tangles $(B,\tau)$ and $(B,\tau')$ are $\textit{isotopic}$ (or equivalent) if there exists a homeomorphism of pairs $K$ from $(B,\tau)$ to $(B,\tau')$ so that $K|_{\partial B}=$id.
 We project $B$ onto $xy$-plane bounded by a great circle $C$. Let $D$ be the disk in $xy$-plane bounded by $C$. We fix the endpoints of $n$ arcs in $C$ regularly. We assume that the projection of $\tau$ onto $D$ is in minimal general position. Moreover, throughout this paper, we assume that the intersections between graphs are in minimal general position. By considering over- and under crossing such as a knot diagram, we have a tangle diagram in $D$. For the classification of rational $2$-tangles, we mainly investigate crossing patterns and relations between the strings directly in the diagrams. However, there is no meaningful pattern from the diagrams of rational $n$-tangles to classify rational $n$-tangle if $n>2$ so far. By the definition of rationality of $n$-tangles, we note that there is a projection of $\tau$ onto the boundary of the $3$-ball simultaneously. In other words, we have a projection of $\tau$ onto $\partial B$ so that the projection of each arc of $\tau$ is a simple arc in $\partial B$ and there is no intersection between the projections of $\tau$ in $\partial B$. Let $\beta_i$ be the projection of $\tau_i$. The projection can guarantee the existence of disjoint three disks in $B$ which are bounded by $\tau_i\cup\beta_i$ respectively. The disks are called a \textit{bridge disk} and $\beta_i$ are called a \textit{bridge arc}. We  note that the disjoint $n$ bridge arcs in $\partial B$ determine the rational $n$-tangle. So, instead of tangle diagrams, we investigate the patterns of disjoint $n$ bridge arcs in six punctured sphere $\Sigma_{0,2n}$. A collection of disjoint $n$ bridge arcs in $\Sigma_{0,2n}$ is called an \textit{arc system} of the rational $n$-tangle $(B,\tau)$. In this paper, we investigate  disjoint three bridge arcs in $\Sigma_{0,6}$ to classify rational $3$-tangles. In order to parameterize simple closed curves in $\Sigma_{0,6}$, we take the three $2$-punctured disks $E_1,E_2$ and $E_2$ which contain the projections of the three strings of the trivial rational $3$-tangle as in Figure~\ref{f0} which is called $\infty$ tangle. Then the following theorem gives a classification of simple arcs in $\Sigma_{0,6}$ in terms of $E_1,E_2$ and $E_3$.

\begin{figure}[htb]
\includegraphics[scale=.3]{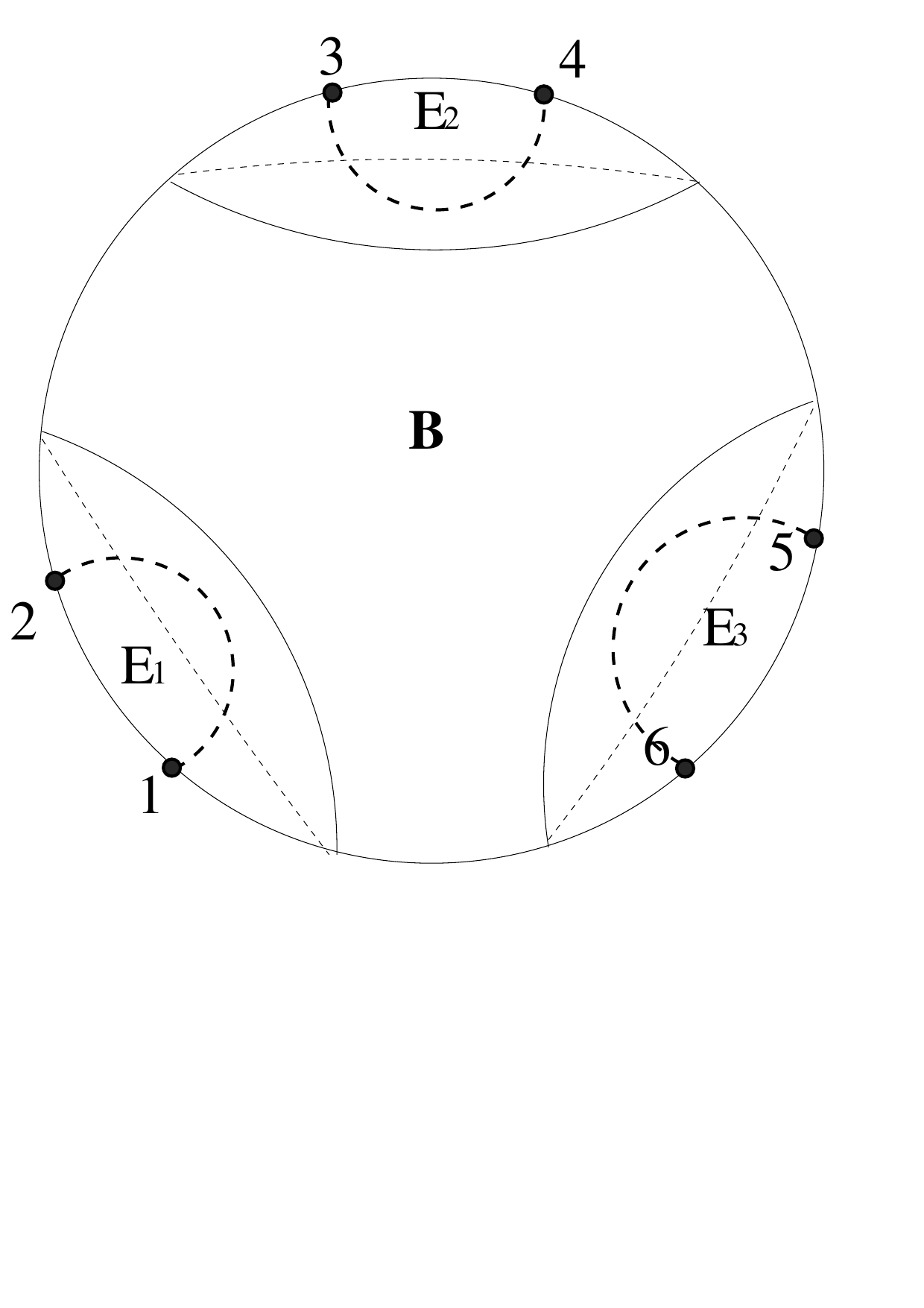}
\vskip -70pt
\caption{Two punctured disks $E_1, E_2$ and $E_3$}\label{f0}
\end{figure}

\begin{Thm}[Special case II of Dehn's theorem]\label{T-1} There is a one-to-one map $\phi :\mathcal{C}\rightarrow \mathbb{Z}^6$ so that $\phi([\beta])=
(p_1,q_1,p_2,q_2,p_3,q_3)$, where $\beta$ is a simple arc connecting two  punctures in $\Sigma_{0.6}$ and $\mathcal{C}$ is the collection of isotopy classes of simple arcs.  i.e., it classifies isotopy classes of simple arcs (bridge arcs).
\end{Thm}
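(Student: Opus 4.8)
The plan is to run the proof of the Dehn--Thurston parametrization of simple closed curves (the ``special case I'') with the modification that the curve is replaced by an arc having two ends at punctures. The relevant pants decomposition of $\Sigma_{0,6}$ is the one cut out by the three curves $c_i := \partial E_i$: cutting along $c_1\cup c_2\cup c_3$ yields the three $2$-punctured disks $E_1,E_2,E_3$ (each a pair of pants whose cuffs are $c_i$ and the two enclosed punctures) together with a central pair of pants $P_0$ with cuffs $c_1,c_2,c_3$. First I would fix once and for all a \emph{seam system}: three disjoint reference arcs in $P_0$ joining the $c_j$ in pairs, and in each $E_i$ a reference arc from $c_i$ to each of its two punctures and one from $c_i$ to itself separating the punctures. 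These seams serve to measure twisting.

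\emph{Construction of $\phi$.} Given $[\beta]\in\mathcal{C}$, put $\beta$ in minimal position with respect to $c_1\cup c_2\cup c_3$; by the bigon / innermost-arc criterion on a punctured sphere such a representative exists and is unique up to isotopy, the isotopy being taken to preserve $c_1\cup c_2\cup c_3$. Let $p_i$ record the geometric intersection number $|\beta\cap c_i|$, reserving the sign of $p_i$ (or, when $p_i=0$, a distinguished value of $q_i$) to encode the finitely many degenerate possibilities left over: which of the $E_i$ contains an end of $\beta$, and whether an arc of $\beta\cap E_i$ is $c_i$-parallel or cuts off a once-punctured disk. Now cut $\beta$ along the $c_i$. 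Each piece lies in one of the four pants, where there are only finitely many essential arc types, each determined up to isotopy by its two feet on the boundary (a piece returning to the same cuff is also determined by a choice of side). The only datum not yet pinned down is how the configurations on the two sides of each $c_i$ are glued along the $p_i$ matched feet: this is exactly one integer's worth of twisting, and I set $q_i\in\mathbb{Z}$ equal to that twist read against the fixed seams, with the convention $q_i=0$ when $\beta\cap c_i=\varnothing$. Put $\phi([\beta])=(p_1,q_1,p_2,q_2,p_3,q_3)$.

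\emph{Well-definedness and injectivity.} Uniqueness of minimal position makes each $p_i$ an isotopy invariant; for $q_i$ one checks that an isotopy between two minimal representatives that altered the twisting against the seams would either create a bigon with some $c_j$ or change some $p_j$, so $q_i$ is well defined. For injectivity I would reverse the recipe: from a tuple in the image, distribute the prescribed feet on each $c_i$, join them inside each $E_i$ and inside $P_0$ in the unique essential way permitted by the pair-of-pants structure, and glue along each $c_i$ with twisting $q_i$; since every local choice was forced and the lone gluing parameter is the integer $q_i$, the isotopy class of the resulting arc is completely determined. Equivalently, induct on $\sum_i p_i$: when $\sum_i p_i=0$ the arc is confined to a single $E_i$ joining its two punctures, which is a unique class (a short finite check disposes of the degenerate flags); when some $p_i>0$ a standard reduction move across $c_i$ lowers $p_i$ and shifts $q_i$ by an amount dictated entirely by the coordinates, so the inductive hypothesis applies.

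I expect the only genuine difficulty to be the bookkeeping for the twisting coordinates $q_i$: choosing sign conventions so that each $q_i$ is a bona fide integer rather than a $\mathbb{Z}$-torsor, folding the degenerate configurations ($p_i=0$, an end of $\beta$ inside $E_i$, $\partial$-parallel subarcs) uniformly into the single tuple $(p_1,q_1,\dots,p_3,q_3)$, and verifying that the reduction move in the injectivity induction changes $(p_i,q_i)$ exactly as the reconstruction recipe predicts; everything else is the routine cut-along-a-pants-decomposition argument. If the image of $\phi$ is also wanted, it is the set of tuples with $p_i\ge 0$ obeying the evident parity constraints relating the $p_i$ to which $E_j$ hold the two ends of $\beta$ (equivalently, the pants relations inside $P_0$ and inside each $E_i$) and with $q_i=0$ whenever $p_i=0$; but only the injectivity of $\phi$ is needed for the normal-form constructions in the sections that follow.
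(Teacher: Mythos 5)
The paper never proves this statement: it is imported as ``Special case II of Dehn's theorem,'' and Subsection~\ref{2.2} only explains how the coordinates are read off ($p_i=|\beta\cap\partial E_i|$, and $q_i=k(2n+2)+i-1$ packaging the twisting number $k$ about the barycentric disk $E_i'$ together with the index of the point $b_i$ met by the leftmost window intersection). So there is no in-paper proof to compare against; your sketch is the standard Dehn--Thurston pants-decomposition argument for exactly the decomposition the paper has in mind ($E_1,E_2,E_3$ plus the central pair of pants), your seam-measured twist is equivalent to the paper's $q_i$, and the reconstruction argument for injectivity is the right one. This is the correct route and it supplies what the paper delegates to the literature.

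Two points need repair, one cosmetic and one genuine. Cosmetically, encoding degenerate flags ``in the sign of $p_i$'' is inconsistent with $p_i$ being a geometric intersection number; the paper simply takes $q_i=0$ when $p_i=0$ (see Lemma~\ref{T6-1}), and since $\beta$ is a single arc there are no closed components, so the usual $p_i=0$ twist ambiguity does not arise. More substantively, injectivity of $\phi$ as literally stated requires the tuple to determine \emph{which two punctures} are the endpoints of $\beta$, and your proposal does not secure this: the parities of the $p_i$ only identify which disks $E_j$ contain endpoints, not which of the two punctures of $E_j$ is hit, and for symmetric tuples even the disk is ambiguous --- the arc joining the two punctures of $E_1$ inside $E_1$ and the corresponding arc inside $E_2$ both receive $(0,0,0,0,0,0)$ under your conventions, yet are not isotopic. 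The paper's normalization resolves the which-puncture ambiguity because the strand ending at puncture $1$ and the strand ending at puncture $2$ attach to $E_1'$ at distinct fixed positions, so the connection index inside $q_1$ records the endpoint; your seam system must be used the same way (the puncture-to-cuff seams have to enter the count defining $q_i$), and the residual all-zero ambiguity forces you either to add the endpoint pair to the data or to read $\mathcal{C}$ as arcs with prescribed endpoints, which is how the theorem is actually applied to bridge arcs throughout the paper. With those adjustments the argument closes.
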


In Theorem~\ref{T-1}, $p_i$ is the number of intersections between $\beta$ and $E_i$ and $q_i$ stands for the connecting pattern between the intersections of $\beta$ with $\partial E_i$ in $E_i$. Refer to the subsection~\ref{2.2} for details. We note that the intersection numbers $p_1,p_2$ and $p_3$ determine  all the arc components in $\Sigma_{0.6}\setminus E$, where $E=E_1\cup E_2\cup E_3$. We also note that a number of disjoint simple arcs also can be classified by Theorem~\ref{T-1}.

\subsection{Dehn's parametrization of an arc system of  a rational 3-tangle}\label{2.2}

Suppose that $\{\beta_1,\beta_2,\beta_3\}$ be an arc system of a rational tangle $T$. Let $1$ and $2$ be the two punctures in $E_1$ as in Figure~\ref{f0}. We define a subarc $\omega_1$ of $\partial E_1$ which contains all intersections between $\beta(=\beta_1\cup\beta_2\cup\beta_3)$ and $\partial E_1$ which is called the \textit{window} of $E_1$.
 We take a barycentric disk $E_1'$ in $E_1$ as in the first diagram of Figure~\ref{f1}. In order to find the Dehn's parameters for $\beta_1,\beta_2$ and $\beta_3$ in $E_1$, we assume that the three bridge arcs pass through $E_1'$ as follows. All of the arc components between the three bridge arcs and $E_1'$ are perpendicular to $e_1$ if they intersect with $e_1$ as in the second diagram of Figure~\ref{f1}, where the equator $e_1$ is the straight line connecting the two punctures $1$ and $2$. The arc component connecting the puncture $1$ is perpendicular to $e_1$ below the puncture $1$ and the arc component connecting the puncture $2$ is perpendicular to $e_1$ above the puncture $2$ as in the second diagram of Figure~\ref{f1}. Let $b_1,b_2,...,b_{2n+2}$ be the sequence of intersections between the arc components and $\partial E_1$ from the leftmost intersection of the bottom to the direction of counter-clockwise as in the second diagram of Figure~\ref{f1}. Then, we connect the intersections of $\beta$ with $\partial E_1$ and $b_1,b_2,...,b_{2n+2}$. Suppose that the leftmost intersection of $\beta$ with $\omega_1$  connects $b_i$. There are infinitely many different simple arcs in $E_1$ which connect the leftmost intersection and $b_i$. However, we can formulate the connecting arc by using a twisting number $n$. If the connecting arc rotates $E_1'$ $k$ times completely, we assign $k$ as the twisting number. We note that there are two directions to rotate. If the connecting arc rotates counter clockwise then $k$ should be a non-negative integer. Otherwise, $k$ should be a non-positive integer. Then we note that all connecting patterns between the intersections in $\omega_1$ and $b_1.b_2,...,b_{2n+2}$ are determined since there is no intersection between the arc components in $E_1$ which connect the intersections in $\omega$ and $b_1,b_2,...,b_{2n+2}$. We choose $q_1=k\times(2n+2)+i-1$, where $k$ is an arbitrary integer and $1\leq i\leq n$. Similarly, we can decide $q_2$ and $q_3$ by considering the connecting patterns in $E_2$ and $E_3$ respectively. The ordered sequence of integers $(p_1,q_1,p_2,q_2,p_3,q_3)$ parameterizes the arc system $\{\beta_2,\beta_2,\beta_3\}$ of $T$.

\begin{figure}[htb]
\begin{center}
\includegraphics[scale=.8]{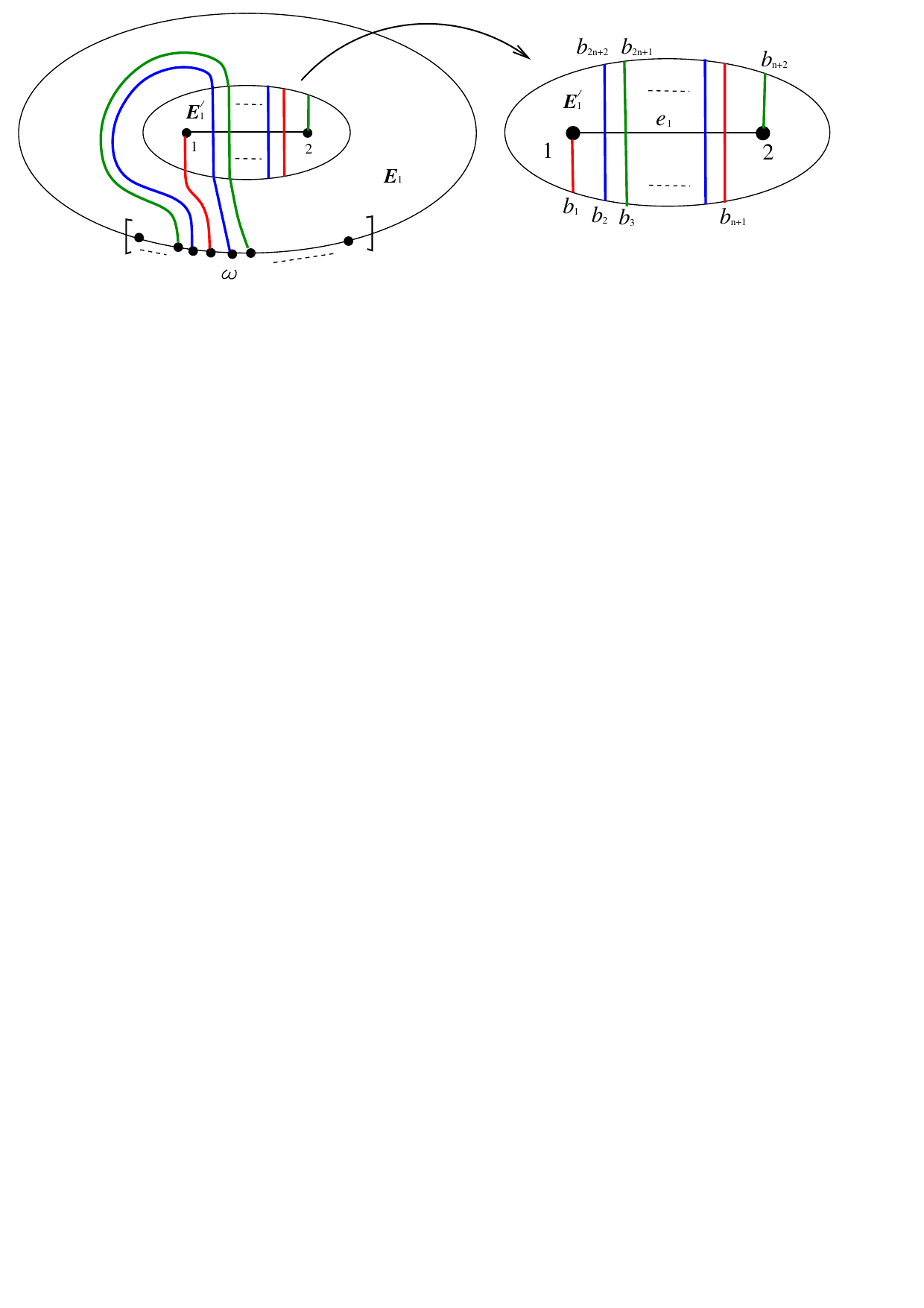}
\end{center}
\vskip -530pt
\caption{Barycentric disk $E_1'$}\label{f1}
\end{figure}

\section{Normal forms and normal jump moves}

\subsection{Normal forms of systems of rational 3-tangles }
Recall that $T=(B^3,\tau)$ is a rational $3$-tangle, $
\mathcal{B}=\{\beta_1,\beta_2,\beta_3\}$ is  an arc system of $T$ and $\beta=\beta_1\cup\beta_2\cup\beta_3$.  We 
say that an arc system $\mathcal{B}$ of $T$ is a \textit{normal form} with respect to $\partial E$ if there is no adjacent 
two intersections between $\beta$ and $\omega_i$ in $\omega_i$ which belong to the same $\beta_j$ for some $j\in\{1, 2, 3\}$. We note that there is no adjacent two intersections between $\beta$ and $e_i$ in $e_i$ as well  if $\mathcal{B}$ is a normal form as in the second diagram of Figure~\ref{f1}.
We assume that there exists adjacent two intersections between $\beta$ and $\omega_i$ in $\omega_i$ which belong to the same $\beta_j$ for some $j\in\{1,2,3\}$. Then we consider the subarc $\delta$ of $\omega_i$ so that the two endpoints of $\delta$ belong to a bridge arc different from $\beta_j$ and $\delta$ intersects only $\beta_j$. We note that $\delta$ cuts $\beta_j$ into at least three subarcs. By connecting the two subarcs of them which contain one of the endpoint of $\beta_j$ along $\delta$, we can construct a new bridge arc $\beta_j'$. We note that $\beta_j'$ is also disjoint with the other two bridge arcs different from $\beta_j$. We also note that any simple arc disjoint with the given two disjoint bridge arcs which connects the remaining two endpoints of $\Sigma_{0,6}$ is also a bridge arc.
The process to have the new bridge arc $\beta_j'$ in stead of $\beta_j$ is called the \textit{bridge arc replacement}.
 The bridge arc replacement can guarantee the existence of a normal form of $T$. (Refer to~\cite{3}.)

 \subsection{Normal jump moves} For  the arc system $\mathcal{B}=\{\beta_1,\beta_2,\beta_3\}$ of a rational $3$-tangle $T$,  we define the move of an element of the arc system called a \textit{jump move} (or sliding move) as follows. Take a regular neighborhood of $\beta_i$, named $N(\beta_i)$, in $\partial B$ which is a  disjoint with $\beta_j$ and $\beta_k$, where $\{i,j,k\}=\{1,2,3\}$.
Now, we consider a band $R$ so that the interior of $R$ is disjoint with $N(\beta_i), \beta_j$ and $\beta_k$ and two opposite sides of $R$ are subarcs of $N(\beta_i)$ and $\beta_j$ respectively as in the first diagram of Figure~\ref{c5}. We  now construct a new bridge arc by using the other two sides of $R$ and the rest of  $N(\beta_i)$ and $\beta_j$ excluding the subarcs for the two opposite sides of $R$  as in the diagrams of Figure~\ref{c5}. The movement to have the new bridge arc  is called the \textit{jump move} of $\beta_j$  (over $\beta_i$). We note that the bridge arc obtained by a bridge arc replacement also can be obtained by a jump move.
\begin{figure}[htb]
\includegraphics[scale=.7]{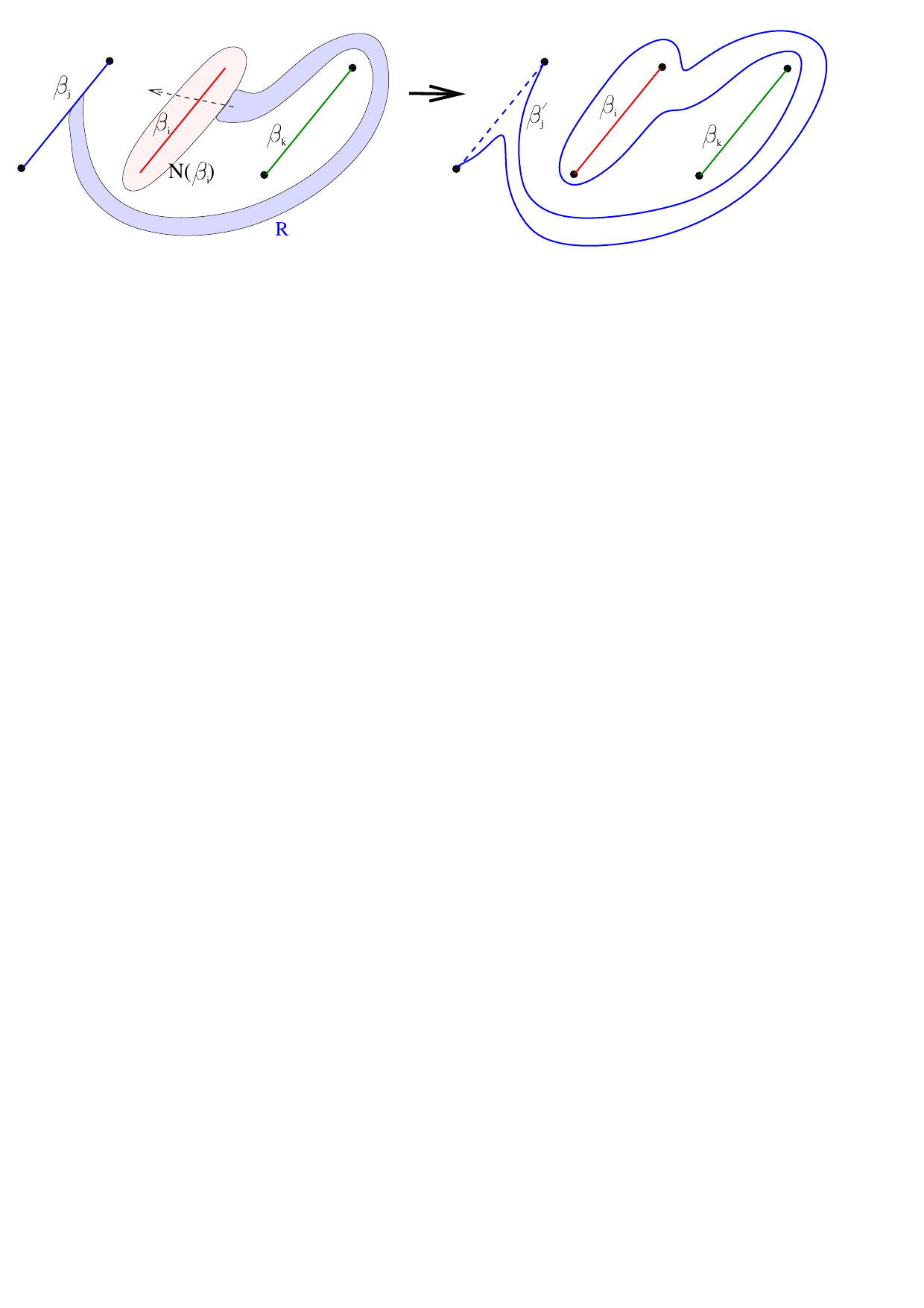}
\vskip -470pt
\caption{A jump move}\label{c5}
\end{figure}

The jump move is called a \textit{normal jump move} if the new arc system $\mathcal{B}'$  obtained from a normal arc system $\mathcal{B}$  by a jump move  is also a normal form.   The author~\cite{3} defined the \textit{standard normal jump move} which can guarantee the existence of normal jump move. The following theorem states the maximal number of normal jump moves.
\begin{Thm}(Theorem $8$~ in \cite{3})\label{T-2} Let $\mathcal{B}=\{\beta_1,\beta_2,\beta_3\}$ be a normal form for a rational $3$-tangle $T$.
There are at most two normal jump moves of $\beta_k$, where $k\in\{1,2,3\}$.
\end{Thm}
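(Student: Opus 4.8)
The plan is to enumerate the bands $R$ that can produce a jump move of $\beta_k$ and to show that the requirement that the resulting arc system again be a normal form cuts the list down to at most two. Fix $\{i,j,k\}=\{1,2,3\}$ and recall that a jump move of $\beta_k$ over $\beta_i$ replaces $\beta_k$ by an arc $\beta_k'$ built from a sub-arc decomposition of $\beta_k$, the two free sides of $R$, and the arc obtained from $\partial N(\beta_i)$ by deleting the side of $R$ that lies on it; since that arc is $\partial N(\beta_i)$ with one sub-arc removed, $\beta_k'$ runs once alongside $\beta_i$, and in particular it winds once around the two punctures joined by $\beta_i$.

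First I would put the band in efficient position. Isotoping $R$ rel $\partial B$ into minimal position with respect to $\partial E=\partial E_1\cup\partial E_2\cup\partial E_3$, and using that in its interior $R$ already avoids $N(\beta_i)$, $\beta_j$ and $\beta_k$, standard innermost-circle and outermost-arc surgeries on $R\cap\partial E$ (keeping track of the finitely many punctures in the complementary pieces of $\beta\cup\partial E$) replace $R$ by an \emph{efficient} band: one whose core slides an outermost sub-arc of $\beta_k$, i.e. a sub-arc of $\beta_k$ co-bounding with a sub-arc of $\partial N(\beta_i)$ a disk meeting $\beta$ only along its boundary. After this reduction a jump move of $\beta_k$ is recorded by a finite datum: which of $\beta_i,\beta_j$ is jumped over, the outermost sub-arc of $\beta_k$ that is slid, and a sign for the winding of the wrap-around arc.

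Next I would use normality of the output to pin down this datum. Let $p,p'$ be the punctures joined by $\beta_k$, and let $\omega,\omega'$ be the windows of the $E$-disks containing them. Reading the cyclic sequence $\beta_k'\cap\omega_m$ in each window and using that $\beta_k'$ runs once alongside $\beta_i$, one checks that if the slid sub-arc of $\beta_k$ is not the one containing an endpoint of $\beta_k$, or if the winding sign is the "wrong" one, then two consecutive intersections of $\beta_k'$ with some $\omega_m$ (or with some $e_m$) belong to the same strand, contradicting the normal-form condition. Thus an efficient band yielding a normal form is localized at an endpoint of $\beta_k$, and at each endpoint it is the unique bridge-arc-replacement-type exchange of that end of $\beta_k$ across the strand that is locally outermost-adjacent to it. Hence there is at most one normal jump move of $\beta_k$ at $p$ and at most one at $p'$, for a total of at most two; the standard normal jump move of~\cite{3} shows at least one exists, so the bound is sharp.

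The main obstacle is the localization and uniqueness claim in the preceding paragraph. A priori the band $R$ may wander through many complementary regions of $\beta\cup\partial E$ and may cross $\partial E$ repeatedly, so the efficient-position reduction must be carried out with care, and then the precise effect on the sequences $\beta\cap\omega_m$ of sliding a general sub-arc of $\beta_k$ over $N(\beta_i)$ has to be computed; it is exactly here that the normal-form hypothesis --- no two consecutive intersections in a window from the same strand --- must be invoked in full strength rather than merely formally. A secondary bookkeeping point is to confirm that the two endpoint moves are never accidentally the same move when they happen to jump over the same strand, so that "one at each endpoint" genuinely sums to "at most two."
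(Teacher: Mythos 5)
The paper does not prove this statement at all: Theorem~\ref{T-2} is quoted verbatim from Theorem~8 of the reference \cite{3}, so the only available comparison is with the ``outlet'' framework that the present paper repeatedly invokes downstream (in Lemma~\ref{L0}, Lemma~\ref{L-2}, Lemma~\ref{L1} and the discussion following Theorem~\ref{T-2}). There the count of normal jump moves of $\beta_k$ is organized globally, by looking at the complement of the \emph{other two} arcs: one asks through which outlets of $\beta_i\cup\beta_j$ (at most four, at most two of them flanked by same-colored window intersections) a replacement arc can run without creating two consecutive same-colored dots in a window, and the second move $k_k^2$ exists only in the special four-outlet configuration of Figure~\ref{outlet}. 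Your proposal instead organizes the count by the two endpoints of $\beta_k$ itself (``at most one move at $p$ and at most one at $p'$''), which is a genuinely different decomposition and does not obviously match the known structure: nothing in the paper suggests that $k_k^1$ and $k_k^2$ are distinguished by which endpoint of $\beta_k$ the band is attached to.

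More importantly, both key steps of your argument are asserted rather than proved, and together they constitute essentially the entire content of the theorem. First, the reduction to a ``finite datum'' is not justified: after your efficient-position surgeries, the two free sides of the band $R$ are still arcs in the multiply punctured complement of $\beta\cup N(\beta_i)$, and such arcs fall into infinitely many isotopy classes (they may wind through many complementary regions of $\beta\cup\partial E$); recording only ``which arc is jumped over, which sub-arc of $\beta_k$ is slid, and a sign'' does not capture this, so you have not actually produced a finite candidate list. Second, the sentence beginning ``one checks that if the slid sub-arc \dots is not the one containing an endpoint \dots then two consecutive intersections \dots belong to the same strand'' is precisely the localization-plus-uniqueness claim that needs a proof; you flag it yourself as the main obstacle. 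Until the effect of a general band on the window sequences $\beta\cap\omega_m$ is computed and the normality condition is shown to eliminate all but two candidates, the argument is a restatement of the theorem, not a proof of it. A workable route is the one the paper gestures at: fix $\beta_i\cup\beta_j$, observe that every candidate replacement for $\beta_k$ is an arc joining the two remaining punctures in the twice-punctured disk $\Sigma_{0,6}\setminus(\beta_i\cup\beta_j)$ (a $\mathbb{Z}$-family under the half-twist), and show directly that the no-two-adjacent-same-color condition on the windows selects at most three members of that family, including $\beta_k$ itself.
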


 By Theorem~\ref{T-2}, there are two possible normal jump moves of $\beta_i$. Let $\beta_i'$ be the obtained bridge arc by a normal jump move of $\beta_i$. We note that $\beta_i'$ can have the same intersection pattern with $\partial E$ having the intersections with $\beta_j$ and $\beta_k$ as $\beta_i$ had, where $\{i, j,k\}=\{1,2,3\}$. If not, then $\beta_i'$ should have the intersection pattern as follows.  (Refer to the arguments in Theorem~\ref{T-2} of ~\cite{3}.)\\

 Let $k_i$ be the normal jump move of $\beta_i$. We note that the intersection between $\beta_j$ and $\omega$ is preserved when we apply $k_i$ if $i\neq j$, where $\omega=\omega_1\cup\omega_2\cup\omega_3$. Then $\beta_i'$ satisfies the followings.

\begin{enumerate}
\item If the intersection of $\beta_i$ and $\omega$ has the  adjacent intersections with the same color in $\omega$ then the intersection is preserved when we replace $\beta_i$ by $\beta_i'$.
\item If the intersection of $\beta_i$ and $\omega$ has the adjacent intersections with different color in $\omega$ then the intersection disappears when we replace $\beta_i$ by $\beta_i'$.
\item If two adjacent intersections between $\beta$ and $\omega$ are not a part of $\beta_i$ then an intersection appears between the two adjacent intersections when we replace $\beta_i$ by $\beta_i'$.
\end{enumerate}\vskip 10pt

Let $k_i^1$ be the \textit{standard} normal jump move of $\beta_i$ which always exists if $\beta_i\cap \omega\neq \emptyset$. (Refer to  ~\cite{3}.) Let $k_i^2$ be  the other normal jump move of $\beta_i$ if it exists.
Figure~\ref{outlet} gives an example of the case that has $k_1^2$. In order to have $k_i^2$, we note that $\beta=(\beta_1\cup\beta_2\cup\beta_3)$ should have four outlets as in the diagram of Figure~\ref{outlet}.
\\

\begin{figure}[htb]
\includegraphics[scale=0.9]{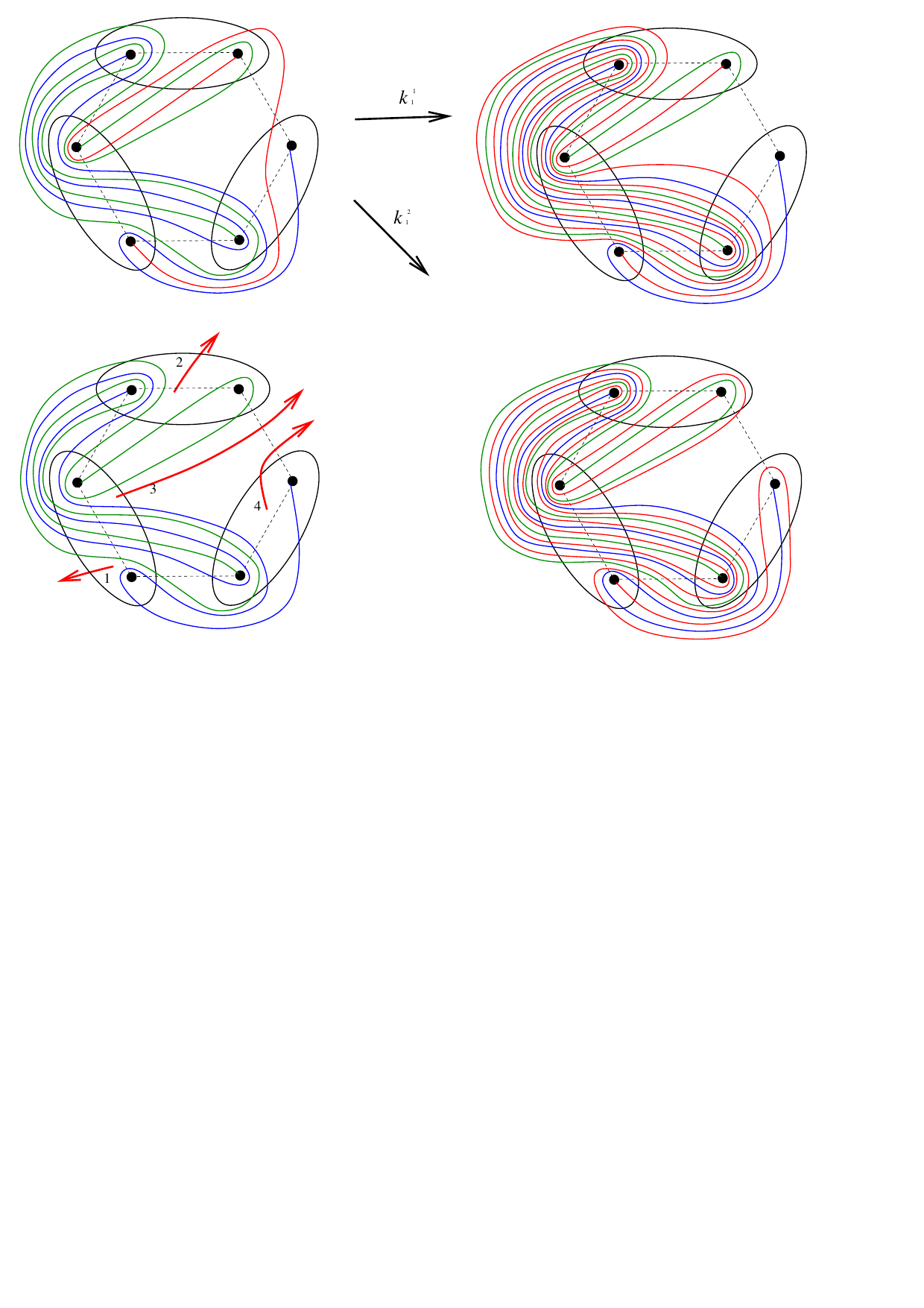}
\vskip -370pt
\caption{A diagram with $k_i^2$ }\label{outlet}
\end{figure}

\begin{Lem}\label{L0}
For a given normal form $\mathcal{B}=\{\beta_1,\beta_2,\beta_3\}$ for a rational $3$-tangle $T$, suppose that there exists $k_1^2$ for some $i$. Let $\{\beta_1',\beta_2,\beta_3\}$ be the normal form of $T$ by $k_1^2$. Then there is no $k_j^2$ from $\{\beta_1',\beta_2,\beta_3\}$ if $j\neq 1$.
\end{Lem}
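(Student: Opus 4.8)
The plan is to argue that the existence of a second normal jump move $k_i^2$ is a rather rigid configuration — by Theorem~\ref{T-2} and the discussion preceding it, $k_i^2$ exists only when $\beta=\beta_1\cup\beta_2\cup\beta_3$ presents \emph{four outlets} arranged as in Figure~\ref{outlet} — and that performing $k_1^2$ destroys this configuration for every other strand. First I would set up notation: fix the window decomposition $\omega=\omega_1\cup\omega_2\cup\omega_3$ and record, for the normal form $\mathcal{B}$, the cyclic sequence of colored intersection points of $\beta$ with $\omega$, together with the "outlet" data (the maximal runs of same-colored points and the pattern of how the $\beta_\ell$ emanate from $\partial E$). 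The key observation is that the four-outlet pattern needed to support a second normal jump move of $\beta_1$ involves all three strands in an essential way, and in particular it pins down how $\beta_2$ and $\beta_3$ meet $\omega$ relative to each other; I would extract from the proof of Theorem~\ref{T-2} in~\cite{3} the precise local picture, since that is where the four-outlet condition is established.

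Next I would analyze the effect of $k_1^2$ on the intersection pattern using the trichotomy stated just before Lemma~\ref{L0}: applying $k_1$ preserves $\beta_j\cap\omega$ for $j\neq 1$, preserves an intersection of $\beta_1$ that sits between two same-colored neighbors, deletes an intersection of $\beta_1$ sitting between two differently-colored neighbors, and \emph{creates} a new intersection of $\beta_1'$ between any two adjacent intersections that are both not on $\beta_1$. The point is that $k_1^2$ (as opposed to the standard move $k_1^1$) is exactly the move realizing the second outlet, so after $k_1^2$ the strand $\beta_1'$ now occupies positions along $\omega$ that "fill in" the gaps between the $\beta_2$- and $\beta_3$-outlets. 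I would then check, case by case on $j\in\{2,3\}$, that in the resulting normal form $\{\beta_1',\beta_2,\beta_3\}$ the strand $\beta_j$ no longer has four outlets available to it: the new intersections contributed by $\beta_1'$ interleave with $\beta_j$'s intersections in a way that collapses one of the two candidate bands $R$ for a jump move of $\beta_j$ down to the standard one. Concretely, a second normal jump move of $\beta_j$ would require a band $R$ whose interior misses $N(\beta_j)$, $\beta_1'$ and $\beta_k$; I would show any such $R$ must cross one of the newly created arcs of $\beta_1'$, contradicting normality of the resulting arc system or the defining disjointness of a jump move.

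The main obstacle I anticipate is bookkeeping the combinatorics of the four-outlet configuration carefully enough that the case analysis is genuinely exhaustive: there is a symmetry between $\beta_2$ and $\beta_3$, but the two faces of $\Sigma_{0,6}$ on either side of $\omega$ and the non-parallel condition on $\partial E$ mean one must be careful that "four outlets for $\beta_1$" and "four outlets for $\beta_2$" cannot coexist even before applying any move — and then that $k_1^2$ cannot \emph{create} four outlets for $\beta_2$ or $\beta_3$. I would handle this by reducing to the local model near $\omega$ (the barycentric-disk picture of Figure~\ref{f1}), where each $\omega_i$ contributes a bounded-length colored word, so that the whole argument becomes a finite check on these words: list the possible words compatible with $k_1^2$ existing, apply the trichotomy to get the new words, and verify none of them admits $k_j^2$ for $j\neq 1$. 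A secondary subtlety is confirming that $\{\beta_1',\beta_2,\beta_3\}$ is indeed again a normal form (so that "$k_j^2$" even makes sense as a normal jump move) — but this is given to us, since $k_1^2$ is by definition a normal jump move, so I would only need to invoke that.
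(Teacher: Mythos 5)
Your proposal follows essentially the same route as the paper: both arguments rest on the observation that $k_1^2$ can exist only in the rigid four-outlet configuration of Figure~\ref{outlet2}, apply the move, and then verify by a finite check on the resulting diagrams that no strand $\beta_j$ with $j\neq 1$ retains the outlet pattern needed for a second normal jump move. The only cosmetic difference is the obstruction you name at the end (the band $R$ being forced to cross the new arcs of $\beta_1'$) versus the paper's (the candidate replacement arc closing up into a simple closed curve rather than a bridge arc); both sit inside the same finite case check.
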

\begin{proof}
\begin{figure}[htb]
\includegraphics[scale=0.9]{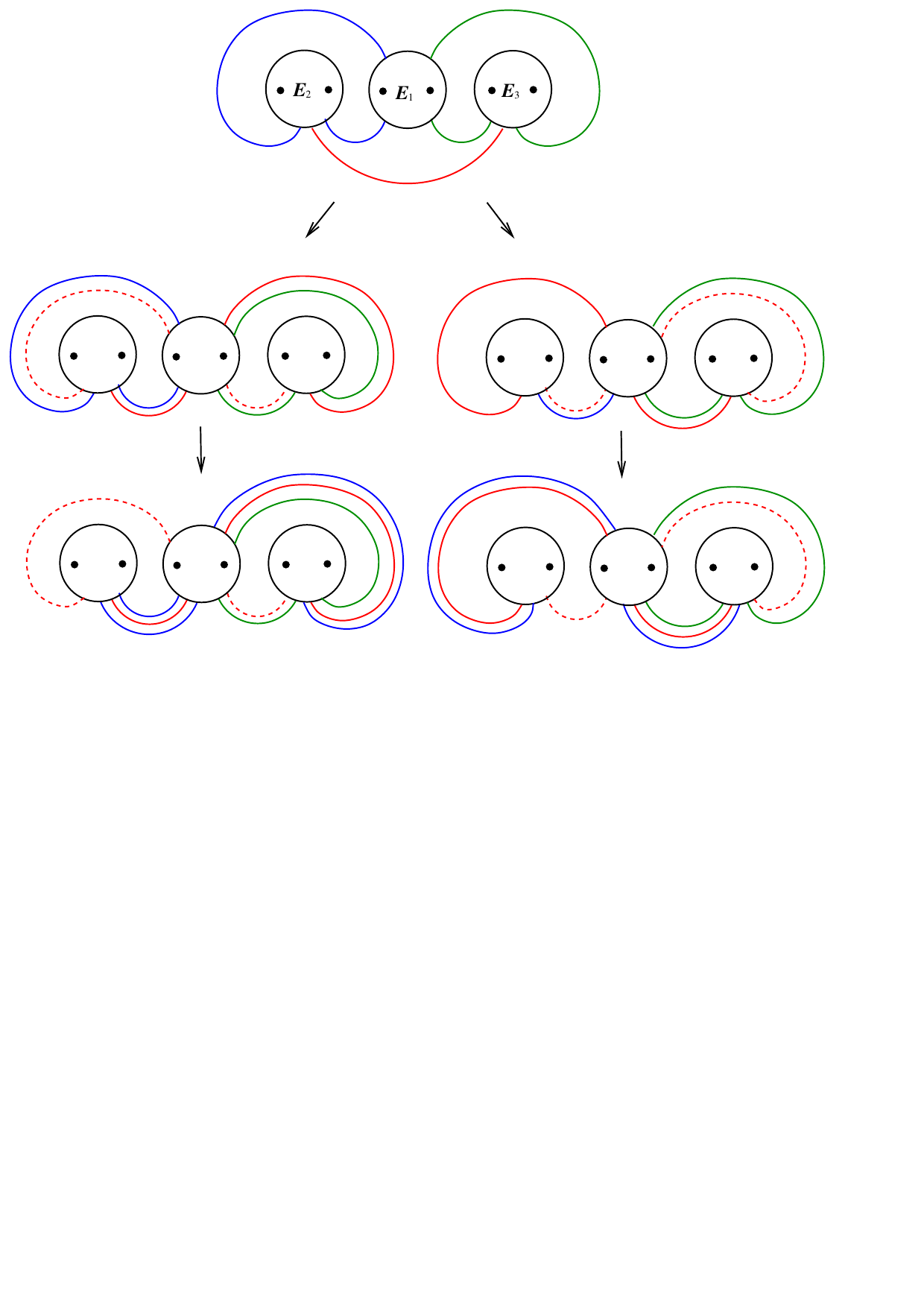}
\vskip -380pt
\caption{Outlets having $k_i^2$ }\label{outlet2}
\end{figure}

The upper diagram is a schematic diagram to have $k_i^2$. We want to point out that the given arcs with  green or blue color describe outermost arcs from $E_s$ to $E_t$ for distinct $s,t\in\{1,2,3\}$. By replacing the red bridge arc, we have the two normal jump moves depicted below. We note that the upper diagram is the only case to have $k_i^2$. If we ignore the red arc, we should have four outlets to have $k_i^2$. Also, the two adjacent intersections of the two endpoints of the red arc should belong to the same bridge arc. Also, we note that the bridge arcs for the adjacent intersections   are different since $\mathcal{B}$ is a normal form. Now, we investigate the lower two diagrams. We consider the blue arcs and the green arcs to check the existence of $k_j^2$ for $j\neq i$. As we mentioned above, the diagram ignoring the blue arcs (or green arcs) should have two outlets having two exit intersections belonging to the same bridge arc. Then either there is no such outlet or there are two outlets but not possible to have $k_j^2$. For instance, there is only one normal jump move replacing the given blue arc since  added blue arcs between the other two outlets to the given diagram make a simple closed curve. It makes a contradiction to have a normal jump move. Therefore, it is impossible to have $k_j^2$ after applying $k_i^2$ to the upper diagram of Figure~\ref{outlet2} if $j\neq i$. This completes the proof of this theorem.
\end{proof}

\begin{Lem}\label{L-2}
Let $\mathcal{B}=\{\beta_1,\beta_2,\beta_3\}$ be a normal form for a rational $3$-tangle $T$. Suppose that  subarcs of $\beta_1\cup\beta_2\cup\beta_3$ connect $E_i$ and $E_j$ for all distinct $i,j\in\{1,2,3\}$. Then there is no $k_s^2$ for any $s\in\{1,2,3\}$.
\end{Lem}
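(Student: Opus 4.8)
The plan is to argue by contradiction, leveraging the structural characterization of $k_i^2$ established in Lemma~\ref{L0} and its proof. Recall from the discussion preceding Lemma~\ref{L0} that for $k_s^2$ to exist, the union $\beta=\beta_1\cup\beta_2\cup\beta_3$ (after ignoring the bridge arc $\beta_s$ being moved) must present four outlets, and moreover the two adjacent intersections flanking the two ``exit'' intersections of $\beta_s$ must belong to the \emph{same} bridge arc — necessarily one of the two arcs other than $\beta_s$, since $\mathcal{B}$ is a normal form. So if $k_s^2$ existed for some $s$, say $s=1$, then the two adjacent intersections on $\omega$ bracketing the relevant outlets would both lie on $\beta_2$ (or both on $\beta_3$). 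The hypothesis — that subarcs of $\beta$ connect $E_i$ and $E_j$ for \emph{all} three distinct pairs $\{i,j\}$ — is what I would use to rule this out: it forces the outermost arcs emanating from each $E_t$ to be ``spread'' among all three regions, so the configuration of outlets required for $k_1^2$ cannot be realized simultaneously with the remaining bridge arc being forced to hit the same $E$-pair twice.

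Concretely, here are the steps I would carry out. First, fix $s$ and suppose $k_s^2$ exists; without loss of generality take $s=1$. Second, invoke the schematic picture from the proof of Lemma~\ref{L0} (the upper diagram of Figure~\ref{outlet2}): the only way to have $k_1^2$ is the depicted pattern, where outside the red arc $\beta_1$ there are exactly four outlets and the two intersections adjacent to $\beta_1$'s endpoints belong to a common bridge arc, say $\beta_2$. Third, trace which pairs $E_i, E_j$ the outermost arcs in that diagram can connect: the green and blue arcs in the figure are outermost arcs running between distinct $E_s$ and $E_t$, and in the $k_1^2$ configuration these outermost arcs, together with the constraint that $\beta_2$ supplies both adjacent intersections, force the outermost arcs to connect only two of the three pairs $\{E_i,E_j\}$ — the third pair is necessarily absent among the outermost arcs. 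Fourth, observe that if a subarc of $\beta$ connects $E_i$ and $E_j$ at all, then \emph{some} outermost (between two distinct $E$'s) arc connects $E_i$ and $E_j$; hence all three pairs must appear among the outermost arcs, contradicting the previous step. This contradiction shows no $k_s^2$ can exist.

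I expect the main obstacle to be Step three: making precise, directly from the combinatorics of windows and outlets rather than from pictures, the claim that the $k_1^2$ configuration is incompatible with the outermost arcs realizing all three $E$-pairs. One way to tighten this is to count: having $k_1^2$ requires (by the paragraph before Lemma~\ref{L0}) four outlets for $\beta\setminus\beta_1$, and the two exit intersections of $\beta_1$ are flanked by intersections on the \emph{same} arc $\beta_2$; this pins down how $\beta_2$ and $\beta_3$ must interleave across the three windows $\omega_1,\omega_2,\omega_3$, and a short case check on how $\beta_2$ can return to both flanking positions shows $\beta_3$ (and the relevant part of $\beta_2$) cannot then reach across all three $E_i$-$E_j$ pairs. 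An alternative, perhaps cleaner, route is to derive the statement as a corollary of Lemma~\ref{L0} combined with a symmetry/parity argument: the hypothesis is symmetric in $1,2,3$, so if some $k_s^2$ existed, applying it and then appealing to Lemma~\ref{L0} would eliminate $k_j^2$ for $j \ne s$, and one then checks that the ``connect every pair'' hypothesis is preserved under $k_s^1$ but forces the surviving normal form to have no $k_s^2$ either, closing the loop. Either way, the heart of the argument is the outlet/outermost-arc bookkeeping, and once that is set up the contradiction is immediate.
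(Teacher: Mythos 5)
Your proposal is correct and follows essentially the same route as the paper: both arguments come down to the outlet count, namely that $k_s^2$ requires the four-outlet configuration with two same-colored exit pairs, which is incompatible with subarcs of $\beta_1\cup\beta_2\cup\beta_3$ joining all three pairs $E_i, E_j$. The paper states this directly (under the hypothesis there is only one outlet whose adjacent intersections share a color, so the normal jump move is unique and must be $k_s^1$), while you run the contrapositive via the outermost-arc bookkeeping; the underlying combinatorial fact is the same.
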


\begin{proof}
Let $\beta_i'$ be the bridge arc by $k_i$.
We note that there is only one outlet for $\beta_i'$ having the two adjacent intersections of it with the same color. By the regulation of  normal jump moves mentioned before, there exists unique normal jump move. We note that there exists one normal jump move then it should be $k_i^1$.
\end{proof}

\begin{Lem}\label{L-1}
For a given normal form $\mathcal{B}=\{\beta_1,\beta_2,\beta_3\}$ for a rational $3$-tangle $T$, let $\{\beta_1',\beta_2,\beta_3\}$ be the normal form by a normal jump move of $\beta_1$.  Suppose that $\beta_i\cap \partial E_1\neq \emptyset$ for all $i\in\{1,2,3\}$ and there exist distinct $s,t\in\{1,2,3\}$ so that there is no subarc of $\beta_1\cup\beta_2\cup\beta_3$ from $E_s$ to $E_t$. If $|\partial E\cap(\beta_1\cup\beta_2\cup\beta_3)|<|\partial E\cap(\beta_1'\cup\beta_2\cup\beta_3)|$ then $|\partial E_1\cap(\beta_1\cup\beta_2\cup\beta_3)|<|\partial E_1\cap(\beta_1'\cup\beta_2\cup\beta_3)|$.
\end{Lem}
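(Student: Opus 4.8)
The plan is to reduce the whole statement to the combinatorics of the three intersection words on $\partial E_1,\partial E_2,\partial E_3$, and then to exploit the rigidity forced by a missing arc--type. Since every point of $\beta\cap\partial E_i$ lies on the window $\omega_i$, put $p_i:=|\beta\cap\partial E_i|=|\beta\cap\omega_i|$ and let $p_i'$ be the analogue for $\mathcal B'=\{\beta_1',\beta_2,\beta_3\}$, so the hypothesis reads $p_1+p_2+p_3<p_1'+p_2'+p_3'$ and the conclusion reads $p_1<p_1'$. A normal jump move of $\beta_1$ leaves $\beta_2,\beta_3$ fixed, hence $p_i'-p_i=|\beta_1'\cap\omega_i|-|\beta_1\cap\omega_i|$. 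By Theorem~\ref{T-1} and the remark following it, every component of $\beta\cap(\Sigma_{0,6}\setminus E)$ is a simple arc joining two distinct circles among the $\partial E_a$ (no self--arcs occur, since otherwise the triple $(p_1,p_2,p_3)$ would not determine the arc components); writing $n_{ab}$ for the number of such components joining $\partial E_a$ to $\partial E_b$, one has $p_i=n_{ij}+n_{ik}$ with $\{i,j,k\}=\{1,2,3\}$. The hypothesis supplies distinct $s,t$ with $n_{st}=0$, whence $p_u=p_s+p_t$ and $p_1+p_2+p_3=2p_u$, where $\{u\}=\{1,2,3\}\setminus\{s,t\}$.

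Next I would record the effect of the jump move. Reading the intersections of $\beta$ with $\omega_i$ in order produces a word $w_i$ over the alphabet $\{1,2,3\}$ — the index of the strand $\beta_j$ through each point — with no two consecutive letters equal, since $\mathcal B$ is a normal form. By the three rules stated just before Lemma~\ref{L0}, passing from $\mathcal B$ to $\mathcal B'$ deletes from $w_i$ exactly the letters ``$1$'' whose two neighbours disagree (so their neighbours are a $2$ and a $3$), keeps those whose neighbours agree, and inserts a ``$1$'' into each consecutive pair of letters $\{2,3\}$ (with the obvious conventions at the two ends of $\omega_i$, as in~\cite{3}). Consequently $p_i'-p_i=a_i-d_i$, where $a_i$ counts the factors ``$23$'' and ``$32$'' of $w_i$ and $d_i$ counts the factors ``$213$'' and ``$312$''; in particular this difference depends only on $\mathcal B$, not on which of the (at most two, by Theorem~\ref{T-2}) normal jump moves of $\beta_1$ was chosen, and the whole statement has become an inequality among the $a_i$ and $d_i$.

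The next ingredient is the block structure of the $w_i$. On $\partial E_i$ the endpoints of the arc--components running to $\partial E_j$ and those running to $\partial E_k$ cannot interleave — an arc to $\partial E_k$ caught between two parallel arcs to $\partial E_j$ would be forced to meet one of them — so along $\omega_i$ they occur in two consecutive blocks, and each block coincides, up to reversal, with the matching block on the neighbouring window, because the arcs of a fixed pair form a parallel family. When $n_{st}=0$ this rigidifies the picture: $w_s$ and $w_t$ are single blocks while $w_u$ consists of two blocks meeting across one ``junction'', and only the word sitting on $\partial E_1$ needs a lower bound on its change $a_i-d_i$. Feeding this description into the count of the previous step collapses the statement to a short list of inequalities among the $a_i,d_i$: essentially a bound $\epsilon\ge j$ at the junction of $w_u$ (where $\epsilon\in\{0,1\}$ records whether the two letters flanking the junction form a $\{2,3\}$ pair and $j$ counts the ``$213$''/``$312$'' factors straddling it) when $u=1$, and, when $1\in\{s,t\}$, a comparison showing that the change on $\partial E_1$ dominates the change on the other window of the missing pair.

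The main obstacle is precisely this last list of inequalities — none of them follows from the word structure alone; a purely local count of the straddling factors does not force $\epsilon\ge j$. To obtain them I would bring in the hypothesis $\beta_i\cap\partial E_1\neq\emptyset$ for every $i$ together with the global constraint that $\beta_1,\beta_2,\beta_3$ are arcs whose six endpoints are exactly the six punctures, two inside each $E_i$ (so, for instance, the three strands cannot all terminate in $E_1$), and then check that this excludes precisely the configurations producing a violating junction — and, in the same way, prevents a normal jump move of $\beta_1$ from manufacturing a new $\{s,t\}$--arc, so that the relation $p_u'=p_s'+p_t'$ persists for $\mathcal B'$. I expect essentially all of the genuine content of the proof to live in this verification, the reduction leading up to it being routine bookkeeping once it is in place.
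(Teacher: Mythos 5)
Your reduction to the window words is sound as far as it goes: translating the three rules preceding Lemma~\ref{L0} into the count $p_i'-p_i=a_i-d_i$ (insertions at red--green adjacencies minus deletions of blue letters with disagreeing neighbours) is exactly the bookkeeping the paper relies on. But you then stop at the decisive point. You state yourself that the needed inequality on $\omega_1$ ``does not follow from the word structure alone'' and that you ``would bring in'' the global hypotheses ``and then check'' that they exclude the violating configurations, expecting ``essentially all of the genuine content of the proof to live in this verification.'' That verification is the lemma; without it you have only reformulated the statement as an inequality among the $a_i,d_i$. The paper's proof supplies precisely the missing argument, and it is not the one you sketch: it does not try to deduce $a_1-d_1>0$ from the total inequality $\sum_i(a_i-d_i)>0$ plus block combinatorics. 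Instead it argues that the absence of an $E_s$--$E_t$ arc leaves only two outlets for $\beta_1'$, both with same-coloured exit intersections, which forces every intersection of $\beta_1$ with $\omega_1$ to sit between two adjacent intersections of $\beta_2\cup\beta_3$ belonging to the \emph{same} bridge arc --- i.e.\ $d_1=0$, so no original $\beta_1$-point on $\omega_1$ is deleted --- while the hypothesis $\beta_i\cap\partial E_1\neq\emptyset$ for all $i$ guarantees at least one red--green adjacency on $\omega_1$, i.e.\ $a_1\geq 1$. Hence the count on $\partial E_1$ strictly increases. Nothing in your proposal establishes $d_1=0$, and that is the genuine gap.

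Two smaller points. First, your parenthetical claim that no component of $\beta\cap(\Sigma_{0,6}\setminus E)$ can run from a circle $\partial E_a$ back to itself ``since otherwise $(p_1,p_2,p_3)$ would not determine the arc components'' is not a valid argument: in a pair of pants the triple $(p_1,p_2,p_3)$ determines the components even when self-returning arcs are present (they appear exactly when a triangle inequality fails), and the hypothesis $n_{st}=0$ is compatible with self-arcs on $\partial E_u$; so your identity $p_i=n_{ij}+n_{ik}$, and the consequence $p_u=p_s+p_t$, need a separate justification or a correction term. Second, your remark that $p_i'-p_i$ is independent of which of the two normal jump moves of $\beta_1$ is chosen conflicts with the case distinction the paper actually makes (one of the two moves may preserve the intersection pattern entirely, contributing $0$); the paper handles this by using the total inequality to rule out the pattern-preserving move before applying the three rules.
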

\begin{proof}
We note that there exist a subarc of $\beta_1\cup\beta_2\cup\beta_3$ from $E_i$ to $E_j$ for distinct $i, j$ if $\{i,j\}\neq \{s,t\}$. So, there are two outlets for $\beta_1'$ having two adjacent intersections with the same color. Since $|\partial E\cap(\beta_1\cup\beta_2\cup\beta_3)|<|\partial E\cap(\beta_1'\cup\beta_2\cup\beta_3)|$, the intersection patterns of $\beta_1\cup\beta_2\cup\beta_3$ and $\beta_1'\cup\beta_2\cup\beta_3$ with $\partial E_1$ are different. So, $\beta_1'$ cannot be obtained from $k_1^2$. Then we note that the intersections of $\beta_1$ with $\partial E_1$ exist only in the middle of two adjacent intersections of $\beta_2\cup\beta_2$ with $\partial E_1$ belonging to the same bridge arc. By the assumption that $\beta_k\cap\partial E_1\neq \emptyset$ for all $k\in\{1,2,3\}$, there exists a pair of adjacent intersections of $\beta_2\cup\beta_3$ with $\partial E_1$ belonging to different bridge arcs. This implies that we also have  $|\partial E_1\cap(\beta_1\cup\beta_2\cup\beta_3)|<|\partial E_1\cap(\beta_1'\cup\beta_2\cup\beta_3)|$ by the regulation of normal jump moves and it completes the proof. (Refer to the diagrams of Figure~\ref{outlet}.)
\end{proof}

\begin{Lem}\label{L}
For a given normal form $\mathcal{B}=\{\beta_1,\beta_2,\beta_3\}$ for a rational $3$-tangle $T$, let $\{\beta_1',\beta_2,\beta_3\}$ be the normal form by a normal jump move of $\beta_1$.  Suppose that $|\partial E_1\cap(\beta_1\cup\beta_2\cup\beta_3)|>|\partial E_1\cap(\beta_1'\cup\beta_2\cup\beta_3)|$. Then there exists unique normal jump move of $\beta_1$ from $\mathcal{B}$ which satisfies the inequality.
\end{Lem}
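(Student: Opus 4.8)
The plan is to reduce the statement to a local analysis on the window $\omega_1=\partial E_1$ and then to a finite case check, using Theorem~\ref{T-2} to bound the number of candidates. By Theorem~\ref{T-2} there are at most two normal jump moves of $\beta_1$ from $\mathcal{B}$: the standard one $k_1^1$, which exists as soon as $\beta_1\cap\omega\neq\emptyset$, and possibly a second one $k_1^2$. If $k_1^2$ does not exist there is nothing to prove, so assume both exist and write $\beta_1'$, $\beta_1''$ for the bridge arcs produced by $k_1^1$ and $k_1^2$. It is enough to prove that the two numbers $|\partial E_1\cap(\beta_1'\cup\beta_2\cup\beta_3)|$ and $|\partial E_1\cap(\beta_1''\cup\beta_2\cup\beta_3)|$ cannot both be strictly smaller than $|\partial E_1\cap(\beta_1\cup\beta_2\cup\beta_3)|$.

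Since a normal jump move of $\beta_1$ fixes $\beta_2$ and $\beta_3$, the intersections of $\beta_2\cup\beta_3$ with $\omega_1$ are common to all three configurations, so the comparison only concerns how the red arc meets $\omega_1$. By the discussion after Theorem~\ref{T-2}, either $\beta_1'$ has the same intersection pattern with $\partial E$ as $\beta_1$ (so the count on $\partial E_1$ is unchanged and the move does not satisfy the strict inequality), or the pattern changes according to rules~(1)--(3): intersection points of $\beta_1$ on $\omega_1$ with two same-coloured neighbours persist, those with two differently-coloured neighbours vanish, and between every pair of $\omega_1$-adjacent intersections with neither point on $\beta_1$ a new point appears. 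Hence the net change of $|\partial E_1\cap\beta|$ under a pattern-changing move equals (number of points created by rule~(3)) minus (number of points destroyed by rule~(2)), and a strict decrease on $\partial E_1$ forces this difference to be negative; in particular, a move realizing the strict inequality must be a pattern-changing move.

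Now I would import the structural picture from the proof of Lemma~\ref{L0}: the existence of $k_1^2$ forces the four-outlet configuration of Figure~\ref{outlet2}, in which $k_1^1$ and $k_1^2$ are exactly the two ways of re-routing the red arc $\beta_1$ through the remaining outlets, the green and blue outermost arcs between the $E_i$'s being fixed, and the two intersections of $\omega$ flanking the red arc belong to a common bridge arc different from $\beta_1$ (here we use that $\mathcal{B}$ is a normal form). Comparing the effect of the two re-routings on $\omega_1$, I would run a case analysis on how many of the four outlets lie on $\omega_1$ and on the colours of the corresponding intersections, and show that the two net changes, say $\Delta'$ for $k_1^1$ and $\Delta''$ for $k_1^2$, always satisfy $\Delta'\cdot\Delta''\leq 0$: re-routing the red arc one way shortens the portion of the pattern on one side of $\omega_1$ exactly when it lengthens it on the other, so the two moves cannot both reduce $|\partial E_1\cap\beta|$. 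Throughout, the requirement that $\mathcal{B}$, $\{\beta_1',\beta_2,\beta_3\}$ and $\{\beta_1'',\beta_2,\beta_3\}$ are all normal forms (no two adjacent same-coloured intersections on any window) is what pins down the admissible local pictures.

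The case analysis in the previous paragraph is the heart of the argument, and I expect it to be the main obstacle: it requires a complete enumeration of the local configurations near $\omega_1$ compatible with the four-outlet schematic, together with a bookkeeping of which of rules~(1)--(3) applies to each intersection on $\omega_1$ under each re-routing. Once $\Delta'\cdot\Delta''\leq 0$ is established, the conclusion is immediate: if one of $k_1^1$, $k_1^2$ realizes the strict inequality then its net change is negative, hence the other has non-negative net change and does not, so the normal jump move of $\beta_1$ with $|\partial E_1\cap(\beta_1\cup\beta_2\cup\beta_3)|>|\partial E_1\cap(\beta_1'\cup\beta_2\cup\beta_3)|$ is unique. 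The degenerate possibilities ($\beta_1\cap\omega_1=\emptyset$, or an endpoint of the window $\omega_1$ taking part in a re-routing) are treated separately; in the first, $\beta_1$ contributes nothing to $\partial E_1$ and no normal jump move of $\beta_1$ decreases the count, so the hypothesis is vacuous.
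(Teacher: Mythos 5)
Your reduction (at most two candidate moves by Theorem~\ref{T-2}, so it suffices to show the standard move and the second move cannot both strictly decrease $|\partial E_1\cap\beta|$) is a legitimate way to frame the lemma, but the step you yourself flag as ``the heart of the argument'' is both missing and, as stated, aimed at the wrong target. You propose to show $\Delta'\cdot\Delta''\leq 0$ via the heuristic that re-routing the red arc one way shortens one side of $\omega_1$ exactly when it lengthens the other. That heuristic conflicts with the structure the paper itself sets up: the rules (1)--(3) following Theorem~\ref{T-2} determine a \emph{single} changed intersection pattern on $\omega$, so if both $k_1^1$ and $k_1^2$ change the pattern they produce the \emph{same} intersection count ($\Delta'=\Delta''$), and Lemma~\ref{L1} asserts that in the four-outlet configuration both moves strictly \emph{increase} $|\partial E_1\cap\beta|$, i.e.\ $\Delta',\Delta''>0$. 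So the dichotomy you are trying to establish is false, and the enumeration you defer would not converge to it. (The final conclusion you want --- not both can decrease --- is still true, but for the reason in Lemma~\ref{L1}, not for the sign-balancing reason you give.)

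The paper's proof takes a different and shorter route that you should compare against. After disposing of the degenerate cases ($\beta\cap\partial E=\emptyset$, or some $\beta_i\subset E_i$, where either no move exists or the unique move preserves the count, so the hypothesis is vacuous), it argues in the main case that the strict decrease $|\partial E_1\cap(\beta_1\cup\beta_2\cup\beta_3)|>|\partial E_1\cap(\beta_1'\cup\beta_2\cup\beta_3)|$ forces $\beta_1'$ to have only \emph{two} outlets, whereas the existence of a second normal jump move requires the four-outlet configuration of Figure~\ref{outlet2}; with two outlets there is a unique way to reconnect the subarcs up to isotopy, hence a unique move realizing the inequality. In other words, the paper does not compare the two candidates' net changes at all --- it uses the decrease hypothesis to kill the second candidate outright. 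Your write-up also leans on Lemma~\ref{L0}'s four-outlet picture, which is the right object to look at, but without carrying out the outlet count under the decrease hypothesis (or invoking something equivalent to Lemma~\ref{L1}) the argument as proposed does not close.
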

\begin{proof}
We can consider the following three cases for this theorem.\\

\textbf{Case 1} : $\beta_i\cap \partial E =\emptyset$ $\forall i$. We note that  $\mathcal{B}$ stands for $\infty$ tangle and it is unique normal form of it. Actually, there is no  normal jump move that satisfies   $|\partial E_1\cap(\beta_1\cup\beta_2\cup\beta_3)|>|\partial E_1\cap(\beta_1'\cup\beta_2\cup\beta_3)|$. \\

\textbf{Case 2} : $(\beta_1\cup\beta_2\cup \beta_3)\cap \partial E \neq \emptyset$  but  $\beta_i\subset E_i$ for some $i$.  In order to have the normal form $\{\beta_1,\beta_2,\beta_3\}$,  we note that $\beta_j\cap \partial E \neq \emptyset$ if $j\neq i$.  If $i=1$ then there is no normal jump move replacing $\beta_1$ since $\partial E_1\cap(\beta_1\cup\beta_2\cup\beta_3)=\emptyset$. So, we assume that that $i\neq 1$. Then we have unique normal jump move, but $|\partial E_1\cap(\beta_1\cup\beta_2\cup\beta_3)|=|\partial E_1\cap(\beta_1'\cup\beta_2\cup\beta_3)|$. (Refer to~\cite{3}.)\\

\textbf{Case 3} : $\beta_i\cap \partial E \neq\emptyset$ $\forall i$. We can check that there is unique normal jump move if $|\partial E_1\cap(\beta_1\cup\beta_2\cup\beta_3)|>|\partial E_1\cap(\beta_1'\cup\beta_2\cup\beta_3)|$ since there are only two outlets of $\beta_1'$ in this case. We note that there is unique way to connect the subarcs from the two outlets up to isotopy. (Refer  to the argument in Theorem 8 in~\cite{3}.)

\end{proof}

\begin{Lem}\label{L1}
Let $\mathcal{B}=\{\beta_1,\beta_2,\beta_3\}$  be a normal form  for  a rational $3$-tangle $T$. Suppose that there exist both $k_i^1$ and $k_i^2$ for some $i$. Then $|\partial E_1\cap(\beta_1\cup\beta_2\cup\beta_3)|<|\partial E_1\cap  k_i^1(\beta_1\cup\beta_2\cup\beta_3)|$ and $|\partial E_1\cap(\beta_1\cup\beta_2\cup\beta_3)|<|\partial E_1\cap  k_i^2(\beta_1\cup\beta_2\cup\beta_3)|$.
\end{Lem}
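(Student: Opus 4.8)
The plan is to analyze the configuration that produces a second normal jump move $k_i^2$ and show that in that configuration both $k_i^1$ and $k_i^2$ strictly increase the number of intersections with $\partial E_1$. First I would invoke Lemma~\ref{L0} and the discussion preceding it: the existence of $k_i^2$ forces $\beta=\beta_1\cup\beta_2\cup\beta_3$ to have exactly four outlets, and (as in the schematic upper diagram of Figure~\ref{outlet2}) the two adjacent intersections flanking the exit of $\beta_i$ must belong to the same bridge arc $\beta_m$ with $m\neq i$. In particular $\beta_j\cap\partial E\neq\emptyset$ for the two indices $j\neq i$, and since $\beta_i$ also exits we are in Case 3 of Lemma~\ref{L} for the bridge arc $\beta_i$.

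Next I would apply the three-item regulation of normal jump moves stated just before Lemma~\ref{L0} to each of $k_i^1$ and $k_i^2$ separately, tracking the effect on $\partial E_1$ rather than on all of $\partial E$. The key observation is that in the four-outlet configuration the intersections of $\beta_i$ with $\partial E_1$ cannot sit between two $\partial E_1$-adjacent intersections of the \emph{same} color: if they did, that pair together with the intervening $\beta_i$-arcs would be exactly the pattern of a $k_i^2$ acting \emph{within} $\partial E_1$ and preserving the $\partial E_1$-intersection pattern, whereas Lemma~\ref{L1}'s hypothesis is that \emph{both} normal jump moves exist and are genuinely distinct. Hence for each normal jump move of $\beta_i$ the relevant adjacent $\partial E_1$-intersections have different colors, so by item~(2) an intersection of $\beta_i$ with $\partial E_1$ can at worst disappear, while by item~(3) every pair of $\partial E_1$-adjacent intersections not on $\beta_i$ forces a \emph{new} intersection of $\beta_i'$ with $\partial E_1$. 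I would then count: the four-outlet hypothesis guarantees at least two such ``not on $\beta_i$'' adjacent pairs along $\partial E_1$ (one near each of the two outlets through which $\beta_i'$ now passes), so at least two intersections are created; meanwhile, because the two exit intersections of $\beta_i$ are flanked by a same-colored pair, $\beta_i$ has at most one intersection with $\partial E_1$ that item~(2) can remove near that exit. Balancing creations against removals gives the strict inequality $|\partial E_1\cap(\beta_1\cup\beta_2\cup\beta_3)|<|\partial E_1\cap k_i^\ell(\beta_1\cup\beta_2\cup\beta_3)|$ for $\ell=1,2$.

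Finally I would assemble these counts into the two asserted inequalities, treating $k_i^1$ and $k_i^2$ symmetrically since both arise by connecting the outlets of the same replaced arc $\beta_i'$ and differ only in how the two outlet-pairs are paired up (Theorem~\ref{T-2} and the uniqueness statement in Case 3 of Lemma~\ref{L}); the parity/counting argument above does not distinguish them, so it applies verbatim to each. I expect the main obstacle to be the careful bookkeeping in the second paragraph: one must verify that the four-outlet picture of Figure~\ref{outlet} genuinely yields \emph{at least two} newly-created $\partial E_1$-intersections and \emph{at most one} destroyed one, which requires checking that no outlet of $\beta_i'$ has its two exit intersections already on $\partial E_1$ with the same color (that would let item~(1) leave $\partial E_1$ unchanged there and could break the strict inequality). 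Ruling this out uses exactly Lemma~\ref{L0}'s structural description — that the configuration of Figure~\ref{outlet2} is the \emph{only} one producing $k_i^2$ — together with the normal-form hypothesis that $\partial E_1$-adjacent exit intersections of a single arc belong to distinct bridge arcs.
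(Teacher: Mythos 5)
Your overall skeleton matches the paper's: both arguments start from the four-outlet configuration that is forced by the existence of $k_i^2$ (Figure~\ref{outlet2}) and then try to count created versus destroyed intersections using the three-item regulation of normal jump moves. However, your key step is not sound. You assert that no intersection of $\beta_i$ with $\partial E_1$ can sit between two adjacent intersections of the same color, arguing that such a pattern would give a jump move preserving the $\partial E_1$-intersection pattern and that this is excluded because ``both moves exist and are genuinely distinct.'' This is circular: the paper explicitly allows a normal jump move whose replacement arc has the \emph{same} intersection pattern with $\partial E$ as $\beta_i$ (two distinct isotopy classes of bridge arcs can share an intersection pattern with $\partial E$), and ruling that out when both $k_i^1$ and $k_i^2$ exist is essentially the content of Lemma~\ref{L1} itself, not a hypothesis you may assume. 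Worse, your claim contradicts the paper's own structural description of the $k_i^2$ configuration: in Figure~\ref{outlet2} the crossings of the replaced (red) arc \emph{are} flanked by same-colored pairs --- that is precisely what permits the second jump move --- and the paper's proof states that two of the four outlets have exit intersections of the same color.

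A second problem is your bookkeeping: ``at most one intersection destroyed'' is a purely local count near one exit, while item~(2) of the regulation removes \emph{every} crossing of $\beta_i$ with $\partial E_1$ that is flanked by a different-colored pair, wherever it occurs along the window; you give no global bound on these. The paper's proof instead pins the strict inequality on the two outlets whose exit intersections have \emph{different} colors: any replacement arc (for either $k_i^1$ or $k_i^2$) must run through them, and the subarcs emerging there necessarily create additional intersections with $\partial E$, which is why $\mathcal{B}$ is the strictly smaller configuration. To repair your argument you would need to replace the circular step with this observation and then do an honest global count comparing the new intersections forced at the different-colored outlets against all crossings of $\beta_i$ removed by item~(2).
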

\begin{proof}
Assume that  there exist $k_i^1$ and $k_i^2$ for some $i$. By referring to the argument in Theorem 7 in~\cite{3}, we note that there are four outlets of $\beta_i'$ which is obtained from $\beta_i$ by $k_i^1$ or $k_i^2$ . Two of them have exit intersections with same color and two of them have exist intersections with different colors, where the exist intersection means that the adjacent intersection of $\beta_1'$ for the outlets. In this case, we note that the standard normal jump move from $\mathcal{B}$ which has a different intersection pattern makes smaller intersections with $\partial E$ since the subarcs from the outlets with different colors makes additional intersections with $\partial E$. This completes the proof.
\end{proof}

 Theorem~\ref{T0} gives a relation between two normal forms of the same rational $3$-tangle $T$ in terms of normal jump moves.

\begin{Thm}(Theorem $9$~ in \cite{3})\label{T0}
Suppose that $\mathcal{B}$ and $\mathcal{B}'$ is two normal forms of a rational $3$-tangle $T$. Then there exists a sequence of normal jump moves to lead one normal form to the other normal form.
\end{Thm}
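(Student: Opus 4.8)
The plan is to run a monotone reduction on a complexity of normal forms and then to analyse the bottom of the reduction. For a normal form $\mathcal{B}=\{\beta_1,\beta_2,\beta_3\}$ of the fixed tangle $T$ set $w_1(\mathcal{B})=|\partial E_1\cap\beta|$ and, as a secondary complexity, $w(\mathcal{B})=|\partial E\cap\beta|$; both are non-negative integers, and by Theorem~\ref{T-1} a normal form is pinned down by finitely much combinatorial data once $w(\mathcal{B})$ is bounded, so among all normal forms of $T$ the lexicographic pair $(w_1,w)$ attains a minimum. Call a normal form of $T$ \emph{taut} if it realizes that minimum. The theorem then follows from two claims: (i) every normal form of $T$ is joined by a sequence of normal jump moves to a taut normal form of $T$; and (ii) any two taut normal forms of $T$ are joined by normal jump moves.

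For (i), first I would observe that if $\mathcal{B}$ is not even a local minimum of $(w_1,w)$, then by Theorem~\ref{T-2} some $\beta_k$ admits a normal jump move strictly decreasing $(w_1,w)$, and Lemmas~\ref{L1} and~\ref{L} make the descent essentially canonical: when $\beta_k$ carries both $k_k^1$ and $k_k^2$ neither of them lowers $w_1$, so a $w_1$-decreasing move must be of $k_k^1$-type with a smaller intersection pattern on $\partial E_1$, and a $w_1$-decreasing normal jump move of $\beta_1$ is unique by Lemma~\ref{L}. The genuinely delicate point is to rule out a \emph{strict local but non-global} minimum, where the complexity is stuck. This is where rationality of $T$ enters. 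Since $T$ is rational, every arc system of $T$ is an arc system of the trivial $3$-tangle, and any two complete bridge-disk systems of the trivial tangle differ by handle slides; since handle slides of bridge disks realize jump moves, all arc systems of $T$ are connected by jump moves. Take such a sequence from $\mathcal{B}$ to an arc system of minimal weight, normalize every intermediate term by bridge arc replacements — which are themselves jump moves, by the remark after Figure~\ref{c5}, and which do not increase the weight since the replaced bridge arc only loses intersections — and read off that the resulting sequence of normal forms has non-increasing weight that eventually strictly drops; hence a non-taut normal form always admits a weight-reducing normal jump move. (An alternative, self-contained route is to replace this input by the outlet case analysis: Lemmas~\ref{L-2} and~\ref{L0} identify the configurations that obstruct a $k_i^2$ as precisely those in which all pairs $E_s,E_t$ are already joined by a subarc, and in those one checks directly that a $w_1$-reducing move is available unless $\mathcal{B}$ is already taut.)

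For (ii), having descended both $\mathcal{B}$ and $\mathcal{B}'$ to taut forms, I would use that at the bottom $w_1$ cannot be lowered, so by the trichotomy in the proof of Lemma~\ref{L} every normal jump move of a taut form is $w_1$-nondecreasing; the residual freedom lies in the $\omega_2,\omega_3$ windows and the Dehn twisting coordinates $q_1,q_2,q_3$. One then classifies the finitely many taut normal forms sharing a fixed intersection pattern on $\partial E_1$ and connects them pairwise by the $w_1$-preserving normal jump moves — exactly the standard moves $k_i^1$ that leave the $\partial E$-pattern unchanged, which exist whenever $\beta_i\cap\omega\neq\emptyset$ by the remark before Lemma~\ref{L0}; in the degenerate case $w_1=0$ the taut form is the unique $\infty$-tangle form, closing the argument.

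The main obstacle is the first step: ruling out a strict local minimum of the chosen complexity. Everything else is bookkeeping with Theorem~\ref{T-2} and Lemmas~\ref{L0}--\ref{L1}, but the descent only works because $T$ is rational — equivalently, because the space of bridge-disk systems of the trivial tangle is connected under slides — and it is precisely there that the hypothesis ``normal form'' does essential work, since a general (non-normal) arc system can certainly sit at a local minimum that the bridge arc replacements in the normalization step are designed to break.
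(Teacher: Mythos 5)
The paper offers no proof of this statement at all: it is imported verbatim as Theorem 9 of \cite{3}, so there is nothing internal to compare your argument against. Judged on its own terms, your proposal has a genuine gap at the step you yourself flag as the crux. In claim (i) you must show that a normal form $\mathcal{B}$ which is a strict local minimum of your complexity $(w_1,w)$ but not a global one admits a \emph{normal} jump move lowering the complexity. Your argument produces instead an auxiliary chain of arc systems of $T$ joined by general jump moves, each term separately re-normalized by bridge arc replacements. Consecutive normalized terms of that chain are in general not related by a single normal jump move (a jump move applied to a normal form need not yield a normal form, and the re-normalizing replacements are interleaved arbitrarily with the slides), so the sentence ``the resulting sequence of normal forms has non-increasing weight that eventually strictly drops; hence a non-taut normal form always admits a weight-reducing normal jump move'' is a non sequitur: monotonicity along an auxiliary chain says nothing about which moves are available at $\mathcal{B}$ itself. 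Closing this gap is precisely the work the ``standard normal jump move'' construction of \cite{3} is designed to do, namely showing that normalization can be made to commute with sliding in a controlled way; your parenthetical ``alternative, self-contained route'' via Lemmas~\ref{L-2} and~\ref{L0} is only a gesture toward that case analysis, not a proof.

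Claim (ii) is also asserted rather than established. The connectivity of the taut (minimal) normal forms by $w_1$-preserving moves is, in this paper's logical order, the content of Theorem~\ref{T4}, Theorem~\ref{T5} and Theorem~\ref{T5-1}, all of which are proved downstream of Theorem~\ref{T0} (they rely on Theorem~\ref{T1} and Lemma~\ref{T3}, whose proofs presuppose the connectivity statement). Invoking that classification here without an independent argument makes the reduction circular. The overall strategy --- monotone descent to a minimal stratum plus connectivity of the stratum --- is a reasonable skeleton and is consistent with how the surrounding paper uses the theorem, but as written neither of its two pillars is supported.
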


\section{A complex of normal forms of a rational $3$-tangle}\label{s4}
\begin{figure}[htb]
\includegraphics[scale=.7]{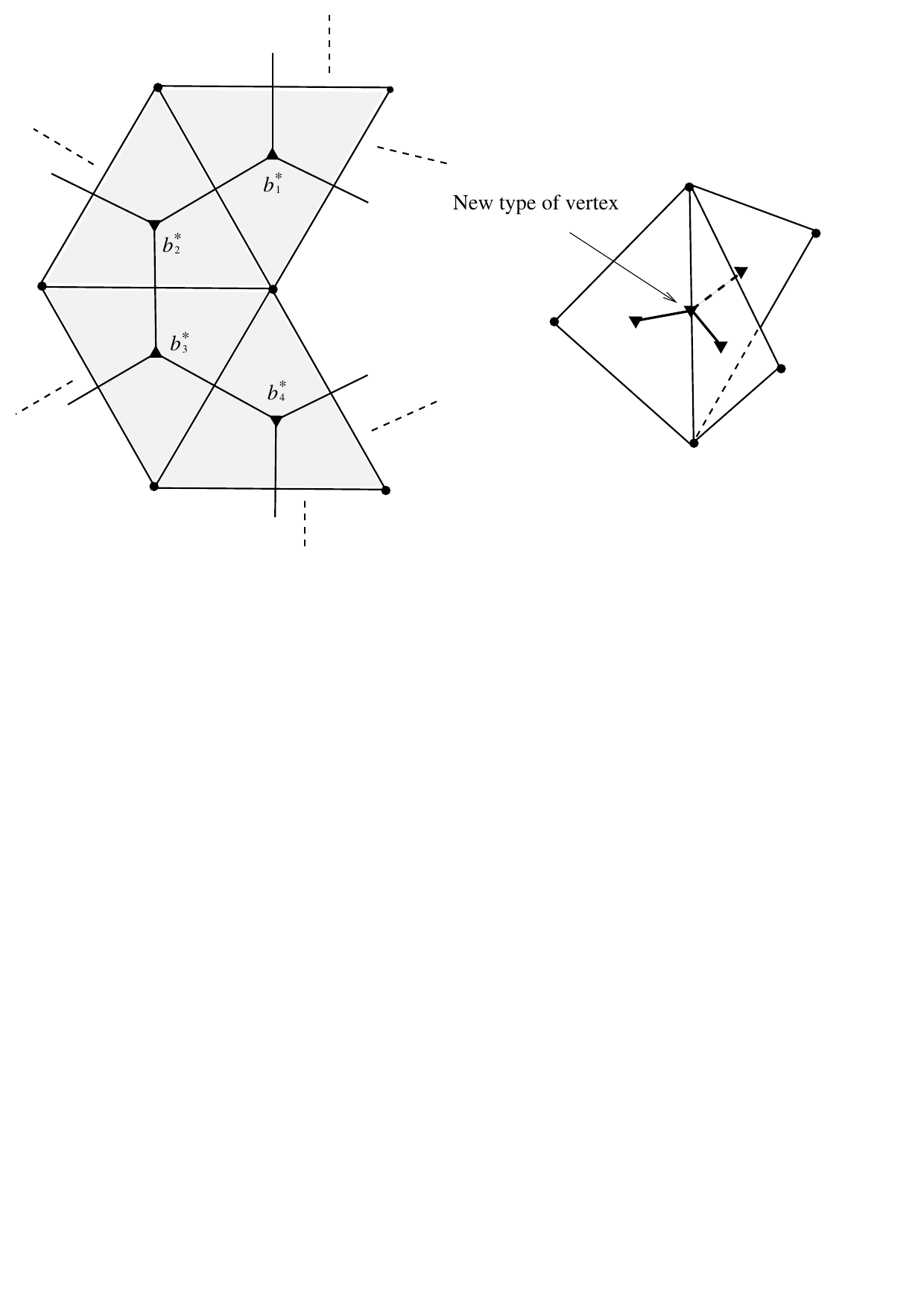}
\vskip -340pt
\caption{A part of bridge arc complex and co-bridge arc complex of $T$}\label{f2}
\end{figure}
In this section, we define a simplicial complex associate to  a rational 3-tangle.  Let $T=(B^3,\tau)$ be a rational $3$-tangle. We define the \textit{bridge arc} complex for the rational $3$-tangle $T$, denoted by $\mathcal{B}(T)$, as follows;
for vertices, we take the  isotopy classes of  bridge arcs  for $T$ on $\Sigma_{0,6}$.
 A collection of $k + 1$ vertices forms a $k$-simplex if there are representatives for each
that are pairwise disjoint.
 We note that  $\mathcal{B}(T)$  forms a $2$-complex since the number of disjoint bridge arcs in $\Sigma_{0,6}$ is at most three.  We also can define the dual-complex of $\mathcal{B}(T)$ denoted by $\mathcal{B}^*(T)$. It is called the \textit{co-bridge arc} complex for $T$. 
Each vertex of $\mathcal{B}^*(T)$ corresponds to  a collection of three isotopy classes of bridge arcs which are  realized as pairwise disjoint three bridge arcs. They are  centers of $2$-cells of $\mathcal{B}(T)$. Two vertices of them  span a $1$-simplex if the corresponding collections contain two common isotopy classes(two vertices of $\mathcal{B}(T)$).  We also note that there exists a different type of vertex which is in the middle of $1$-simplex of $\mathcal{B}(T)$ if more than two $2$-cells contain the given $1$-simplex as in the second diagram of Figure~\ref{f2}. We note that the dimension of $\mathcal{B}^*(T)$ is one since the dimension of $\mathcal{B}(T)$ is two. So, $\mathcal{B}^*(T)$ is a graph.
 Now, we consider the sub-complex of  $\mathcal{B}^*(T)$ which consists of vertices corresponding to collections of  three disjoint bridge arcs of $T$ which are normal forms and vertices in the middle of $1$-cell which is the common edge of more than two $2$-cells represent normal forms. It is called the \textit{normal complex} of $T$ denoted by $\mathcal{N}(T)$. We note that there is a path between any two vertices of them by Theorem~\ref{T0}.
So, $\mathcal{N}(T)$ is connected.  Actually, we also note that $\mathcal{B}^*(T)$ is  connected since the procedure to have a normal form is replacing one of the given three bridge arcs by a simpler bridge arc.  One of our main goals in this paper is to show it is contractible.  We will give a proof for this in next section.

\section{Contractibility of $\mathcal{N}(T)$}
\label{s5}
Recall that $E_1'$ is the barycentric two punctured disk in $E_1$ as in the diagrams of Figure~\ref{f1}. We discuss the pattern of arc components in $E_1'$ to prove the statement that the normal complex is contractible.
 First of all, we decide an order of the three simple arcs of a normal form of $T$. We label the fixed six punctures in $\Sigma_{0,6}$ from $p_1$ to $p_6$, from the left puncture in $E_1$ to the right puncture in $E_3$ in a clockwise direction as in the diagram of Figure~\ref{f3}.
 \begin{figure}[htb]
 \includegraphics[scale=.5]{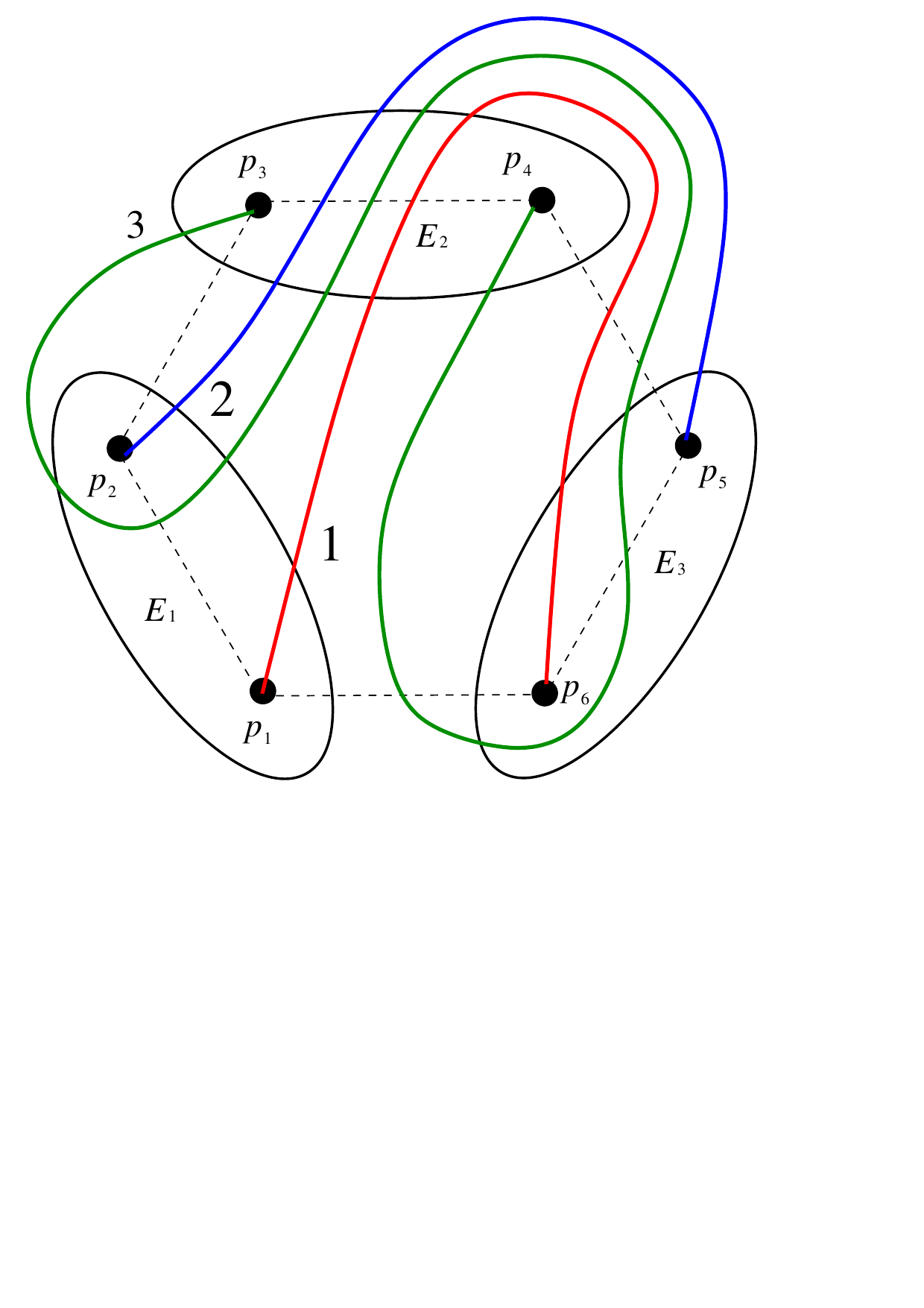}
 \vskip -160pt
 \caption{Labelling of bridge arcs}\label{f3}
 \end{figure}
Now we name the three simple arcs of a normal form of $T$ as follows. The simple arc is labelled as $1$ if one of the endpoints of it is $p_1$. If the other endpoint of the simple arc labelled as $1$ is $p_2$ then the simple arc containing $p_3$ is labelled as $2$. Otherwise, the simple arc containing $p_2$ is named as $2$. Then the last simple arc is labelled as $3$  as in the diagram of Figure~\ref{f3}.
 Let $\beta_1,\beta_2,\beta_3$ be the labelled three bridge arcs.
Let $\boldsymbol{\beta}=(\beta_1,\beta_2,\beta_3)$ be the ordered sequence of a normal form $\mathcal{B}$ of a rational $3$-tangle $T$ which is called a \textit{normal coordinate} of $T$.  Let $k_{i}^1(\boldsymbol{\beta})$ be the normal coordinate of $T$  which is obtained from $\boldsymbol{\beta}$  by standard normal jump move replacing $\beta_i$ if it exists.   Let $k_{i}^2(\boldsymbol{\beta})$ be the normal coordinate of $T$  which is obtained from $\boldsymbol{\beta}$  by another possible normal jump move replacing $\beta_i$ if it exists.   Recall that there are at most two normal jump moves replacing $\beta_i$ by Theorem~\ref{T-2}.  We call $k_{i}^1$ and $k_{i}^2$  the \textit{normal mapping}. We note that the two  bridge arcs replacing $\beta_i$ by $k_i^1$ and $k_i^2$ intersect more than two times. This implies that they meet at a point which is not  at the two endpoints. Now, we define $k_j^{\delta_j}\circ k_i^{\delta_i}(\boldsymbol{\beta})=
k_j^{\delta_j}(k_i^{\delta_i}(\boldsymbol{\beta}))$ for some $i,j\in\{1,2,3\}$, where $\delta_i, \delta_j\in\{1,2\}$. Let $(k_i^{\delta_i})^n=\overbrace{k_i^{\delta_i}
\circ k_i^{\delta_i}\circ\cdot\cdot\cdot \circ k_i^{\delta_i}}^n$. 
We note that $(k_i^{1})^2(\boldsymbol{\beta})=\boldsymbol{\beta}$ for any $i\in\{1,2,3\}$ by the construction of standard normal jump move. (Refer to~\cite{3}.)  Clearly,  we also have the following lemma since there are only three possible normal forms which preserve $\beta_j$ and $\beta_k$, where $\{i,j,k\}=\{1,2,3\}$.

  \begin{Lem}\label{L5}
 Let $h=k_i^{\delta_{i1}}
\circ k_i^{\delta_{i2}}\circ\cdot\cdot\cdot \circ k_i^{\delta_{in}}$, where $\delta_{ik}\in\{1,2\}$ for all $k\in\{1,2,3,...,n\}$. Then $h(\boldsymbol{\beta})=\boldsymbol{\beta},~ k_i^1(\boldsymbol{\beta})$ or $ k_i^2(\boldsymbol{\beta})$.  
 \end{Lem}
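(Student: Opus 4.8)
The plan is to reduce the statement to the assertion, already invoked just before this lemma, that only three normal forms of $T$ preserve $\beta_j$ and $\beta_k$, where $\{i,j,k\}=\{1,2,3\}$. First note that, by the very definition of a jump move of $\beta_i$, each normal mapping $k_i^{1}$ and $k_i^{2}$ replaces only the $i$-th arc of a normal coordinate and leaves the other two arcs untouched; hence, whenever it is defined, $h(\boldsymbol{\beta})$ is a normal coordinate of $T$ whose $j$-th and $k$-th entries coincide with those of $\boldsymbol{\beta}$. Next I would record that $\boldsymbol{\beta}$, $k_i^{1}(\boldsymbol{\beta})$ and $k_i^{2}(\boldsymbol{\beta})$ are pairwise distinct (among those that exist): a jump move changes the isotopy class of the arc it acts on, and the two arcs obtained from $\beta_i$ by $k_i^{1}$ and $k_i^{2}$ meet more than twice, as noted just before this lemma, so they are non-isotopic. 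If $\beta_i\cap\omega=\emptyset$ there is no normal jump move of $\beta_i$ and $h(\boldsymbol{\beta})=\boldsymbol{\beta}$, so I may assume $k_i^{1}$ exists.

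The heart of the argument is the count of normal forms of $T$ that keep $\beta_j$ and $\beta_k$ fixed. Let $\mathcal{F}$ be the set of all such normal coordinates; by the previous paragraph every $k_i^{\delta}$, $\delta\in\{1,2\}$, maps $\mathcal{F}$ into itself, and by Theorem~\ref{T-2} each element of $\mathcal{F}$ admits at most two normal jump moves of its $i$-th arc, with the identity $(k_i^{1})^{2}(\boldsymbol{\beta})=\boldsymbol{\beta}$ saying that one of those available at $k_i^{1}(\boldsymbol{\beta})$ returns to $\boldsymbol{\beta}$. To see that $\mathcal{F}$ has no more than three elements one uses the rigidity of the configuration in which a second normal jump move can occur: as recalled before Lemma~\ref{L0} and proved in~\cite{3}, the presence of a $k_i^{2}$ forces the four-outlet picture of Figure~\ref{outlet2}, in which $\beta_i$ together with the two arcs produced by $k_i^{1}$ and $k_i^{2}$ are exactly the three arcs joining the two remaining punctures across the frame of outlets determined by $\beta_j\cup\beta_k$; reading off that picture shows both that any further normal jump move of the $i$-th arc produces one of those same three arcs, and that there is no normal form preserving $\beta_j$ and $\beta_k$ outside this list. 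I expect this step---turning the local four-outlet analysis of~\cite{3} into the global bound $|\mathcal{F}|\le 3$, and in particular checking that no extra normal form escapes the list---to be the main obstacle; everything around it is bookkeeping.

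Granting $|\mathcal{F}|\le 3$, the lemma is immediate. Since $\boldsymbol{\beta}$, $k_i^{1}(\boldsymbol{\beta})$ and $k_i^{2}(\boldsymbol{\beta})$ are pairwise distinct members of $\mathcal{F}$, they constitute all of $\mathcal{F}$ (and when $k_i^{2}$ does not exist, $\boldsymbol{\beta}$ and $k_i^{1}(\boldsymbol{\beta})$ already account for $\mathcal{F}$, which then has at most two elements by the same argument). As observed at the outset, $h(\boldsymbol{\beta})\in\mathcal{F}$ whenever $h$ is defined, so $h(\boldsymbol{\beta})\in\{\boldsymbol{\beta},\,k_i^{1}(\boldsymbol{\beta}),\,k_i^{2}(\boldsymbol{\beta})\}$. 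Alternatively, one can avoid discussing all of $\mathcal{F}$ and instead induct on $n$: writing $h=k_i^{\delta_{i1}}\circ h'$ with $h'=k_i^{\delta_{i2}}\circ\cdots\circ k_i^{\delta_{in}}$, the inductive hypothesis places $h'(\boldsymbol{\beta})$ in the three-element set $\{\boldsymbol{\beta},\,k_i^{1}(\boldsymbol{\beta}),\,k_i^{2}(\boldsymbol{\beta})\}$, and the local analysis above shows this set is carried into itself by each $k_i^{\delta}$; the base case $n=1$ is the definition of the normal mappings.
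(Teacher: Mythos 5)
Your proposal is correct and follows essentially the same route as the paper, which gives no separate proof of this lemma but justifies it in the preceding sentence by the very fact you isolate: that there are only three possible normal forms preserving $\beta_j$ and $\beta_k$, where $\{i,j,k\}=\{1,2,3\}$. Your write-up merely fleshes out that one-line justification (closure of the set $\mathcal{F}$ under the normal mappings, distinctness of the three coordinates, and the four-outlet rigidity behind the bound $|\mathcal{F}|\le 3$), which is exactly the bookkeeping the paper leaves implicit.
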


 For convenience, we use $k_i$ instead of $k_i^1$ or $k_i^2$ if $k_i$ means an arbitrary normal mapping of the two.
Let $|\boldsymbol{\beta}\cap \partial E_1|$ be the  geometric minimal intersection number between $\beta$ and $\partial E_1$.  We note that for a given rational $3$-tangle $T$ there exists a normal coordinate $\boldsymbol{\beta}'$ so that $|\boldsymbol{\beta}'\cap \partial E_1|\leq |k_i(\boldsymbol{\beta}')\cap \partial E_1|$ for all $i
\in\{1,2,3\}$ since $|\boldsymbol{\beta}\cap \partial E_1|<\infty$. If $\boldsymbol{\beta}'$ satisfies the condition above, we call  $\boldsymbol{\beta}'$  a \textit{minimal} normal coordinate of $T$ with respect to $\partial E_1$. The following theorem can guarantee   the global minimality for the definition of a minimal normal coordinate not just the local minimality by combining with Lemma~\ref{L5}. In other words, it is impossible to have a normal coordinate $\boldsymbol{\beta}''$ of $T$ so that
$|\boldsymbol{\beta}''\cap \partial E_1|<|\boldsymbol{\beta}'\cap \partial E_1|$ if $\boldsymbol{\beta}'$ is a minimal normal coordinate of $T$ with respect to $\partial E_1$. 

\begin{Thm}\label{T1}
Suppose that $\boldsymbol{\beta}$ is a minimal normal coordinate of a rational $3$-tangle $T$ with respect to $\partial E_1$. Let $\{k_{t_i}\}_1^n$ be a sequence of $k_{t_i}$ so that $t_i\neq t_{i+1}$ for all $\displaystyle{i\in\{1,2,...,n-1\}}$. Let $k=k_{t_n}\circ k_{t_{n-1}}\circ\cdot\cdot\cdot\circ k_{t_1}$. Then $|\boldsymbol{\beta}\cap\partial E_1|\leq |k(\boldsymbol{\beta})\cap\partial E_1|$ for any sequence of $k_{t_i}$ and $n$.
\end{Thm}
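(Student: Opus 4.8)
The plan is to argue by induction on the length $n$ of the sequence $\{k_{t_i}\}_1^n$. For $n=1$ the inequality is exactly the defining property of a minimal normal coordinate, so the base case is free. Write $\boldsymbol{\beta}_0=\boldsymbol{\beta}$, $\boldsymbol{\beta}_i=k_{t_i}(\boldsymbol{\beta}_{i-1})$ and $a_i=|\boldsymbol{\beta}_i\cap\partial E_1|$; minimality gives $a_0\le a_1$, and the whole point is to propagate the lower bound $a_0$ along the tail of the sequence even though $\boldsymbol{\beta}_1,\dots,\boldsymbol{\beta}_{n-1}$ need not themselves be minimal. Before the main argument I would clear away the degenerate possibilities isolated in the proof of Lemma~\ref{L}: when $\beta_i\cap\partial E=\emptyset$ for all $i$, $\boldsymbol{\beta}$ represents the $\infty$ tangle, which admits a unique normal form and no $\partial E_1$-decreasing normal jump move; and when some $\beta_i\subset E_i$, the normal jump moves either do not exist (for $\beta_1$) or leave $|\beta\cap\partial E_1|$ unchanged, so a short induction suffices. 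Thus one may assume $\beta_i\cap\partial E\neq\emptyset$ for every $i$.

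The next step is to read off the behaviour of the first move from Lemmas~\ref{L1} and~\ref{L0}. If the second normal jump move $k_{t_1}^2$ of $\beta_{t_1}$ exists, Lemma~\ref{L1} forces $a_1>a_0$ strictly, and Lemma~\ref{L0} says no further $k^2$-move is available from $\boldsymbol{\beta}_1$; if it does not exist, then $k_{t_1}=k_{t_1}^1$ is the unique normal jump move of $\beta_{t_1}$, it is an involution, and $a_1\ge a_0$. So the inductive step splits into the flat case $a_1=a_0$ and the strictly increasing case $a_1>a_0$. In the flat case I would prove the local claim that $\boldsymbol{\beta}_1$ is again a minimal normal coordinate: using the bookkeeping rules (1)--(3) recorded just before Lemma~\ref{L0} for how the window intersections of $\beta$ move under $k_{t_1}^1$, together with Lemma~\ref{L} (uniqueness of a strictly $\partial E_1$-decreasing normal jump move of a fixed arc), one checks that a move which is flat on $\partial E_1$ neither changes the availability of decreasing moves of the other two arcs (which are inherited, hence non-decreasing by minimality of $\boldsymbol{\beta}$) nor creates a decreasing move of $\beta_{t_1}$ itself (the reverse move is again flat). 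Granting this, the induction hypothesis applied to the shorter alternating sequence $k_{t_2},\dots,k_{t_n}$ based at $\boldsymbol{\beta}_1$ gives $a_n\ge a_1=a_0$.

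The substantive case is $a_1>a_0$, where $\boldsymbol{\beta}_1$ is definitely not minimal because the reverse of $k_{t_1}$ decreases $|\beta\cap\partial E_1|$ back to $a_0$. Here I would establish the sharper statement that this reverse move is the \emph{only} normal jump move taking $\boldsymbol{\beta}_1$ strictly below $a_0$: this is where Lemma~\ref{L} (uniqueness of a strictly decreasing normal jump move of a fixed arc) and Lemma~\ref{L-1} (a decrease measured on $\partial E$ forcing a decrease on $\partial E_1$) do the work, with the outlet count of Case~3 of Lemma~\ref{L} and with Lemma~\ref{L-2} ruling out competing decreasing moves of the other two arcs. Since $t_2\ne t_1$ the reverse move is forbidden as the very next step, and I would then run a secondary induction down the tail, carrying the invariant ``$\boldsymbol{\beta}_j$ is either minimal, or is obtained from a minimal normal coordinate by a single $\partial E_1$-strictly-increasing normal jump move of an arc whose index differs from $t_j$'', which is preserved under each subsequent $k_{t_{j+1}}$ with $t_{j+1}\ne t_j$ and keeps $a_j\ge a_0$ for all $j$.

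The main obstacle is precisely this last case: a genuine detour that leaves the minimum, wanders through non-minimal normal forms, and might a priori dip back below $a_0$. What prevents it is the rigidity packaged in Lemmas~\ref{L0}--\ref{L1}: a destructive $k^2$-move cannot be repeated, a strictly $\partial E_1$-decreasing normal jump move of a given arc is unique, and such decreasing moves disappear once the three disks are pairwise joined by subarcs of $\beta$; together these ensure the configuration ``remembers'' how it left the minimum, so that the only route back down is the single move barred by the non-backtracking hypothesis $t_i\ne t_{i+1}$. Combined with Theorem~\ref{T0}, this then upgrades the conclusion to true global minimality of a minimal normal coordinate.
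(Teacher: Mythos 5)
Your overall strategy (induction on $n$, after clearing the degenerate cases via Lemma~\ref{L}) matches the paper's in outline, but the inductive step as you have set it up does not close, and the two claims you lean on are exactly where the real difficulty lives. First, your ``flat case'' claim --- that a normal jump move with $a_1=a_0$ carries a minimal normal coordinate to a minimal normal coordinate --- is itself an instance of the theorem with $n=2$: to verify that $\boldsymbol{\beta}_1=k_{t_1}(\boldsymbol{\beta}_0)$ is minimal you must show $|k_i\circ k_{t_1}(\boldsymbol{\beta}_0)\cap\partial E_1|\ge a_0$ for $i\ne t_1$, which is precisely the statement being proved. The bookkeeping rules (1)--(3) and Lemma~\ref{L} do not give this for free, and the paper's later results (Theorems~\ref{T4}, \ref{T5}, \ref{T5-1}) that do assert such facts are themselves derived \emph{from} Theorem~\ref{T1}, so invoking them here would be circular. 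Second, and more seriously, the invariant you carry in the increasing case --- ``$\boldsymbol{\beta}_j$ is either minimal or one $\partial E_1$-strictly-increasing move away from a minimal normal coordinate'' --- is not preserved. After two strictly increasing steps $k_{t_1}$, $k_{t_2}$ the coordinate $\boldsymbol{\beta}_2$ is not minimal (its $k_{t_2}$-reverse has strictly smaller count) and its only one-move neighbours are $\boldsymbol{\beta}_1$ (also not minimal) and coordinates about which nothing is known; you assert preservation without argument, and Lemmas~\ref{L0}, \ref{L}, \ref{L-1}, \ref{L-2} control only the uniqueness and type of a single decreasing move of a single arc, not the cumulative effect of a long alternating detour.

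What is missing is the combinatorial mechanism the paper uses to propagate the lower bound through non-minimal intermediate coordinates: a direct analysis of the intersection pattern on the window $\omega_1$. In the case where some $\beta_i$ misses $\omega_1$, the paper proves by induction that every original intersection point is preserved under the whole composition $k$ and that the pattern stays symmetric about the original dots, so the count can never drop. In the case where all three arcs meet $\partial E_1$, the paper compares $k_1^{-1}\circ k(\boldsymbol{\beta})$, $k(\boldsymbol{\beta})$ and $k_2\circ k(\boldsymbol{\beta})$, classifies the local color patterns between consecutive dots of the moved arc into three types, uses the inductive inequality $|\boldsymbol{\beta}\cap\partial E_1|\le|k(\boldsymbol{\beta})\cap\partial E_1|$ to bound the number of destructive patterns by the number of constructive ones, and produces an injection from the dots of the moved color before the step to those after it. Some version of this dot-level argument (or an equivalent quantitative invariant of the window pattern) is unavoidable; without it your induction hypothesis is too weak to control the next step, and the proposal as written has a genuine gap at its central point.
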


\begin{proof}

Let $\boldsymbol{\beta}=(\beta_1,\beta_2,\beta_3)$ be a minimal normal coordinate of $T$ with respect to $\partial E_1$. 
 Now, we investigate the window $\omega_1$ of $\partial E_1$ which contains all the intersections between $\beta$ and $\omega_1$ as in the diagram of Figure~\ref{c11}.
\begin{figure}[htb]
\includegraphics[scale=.6]{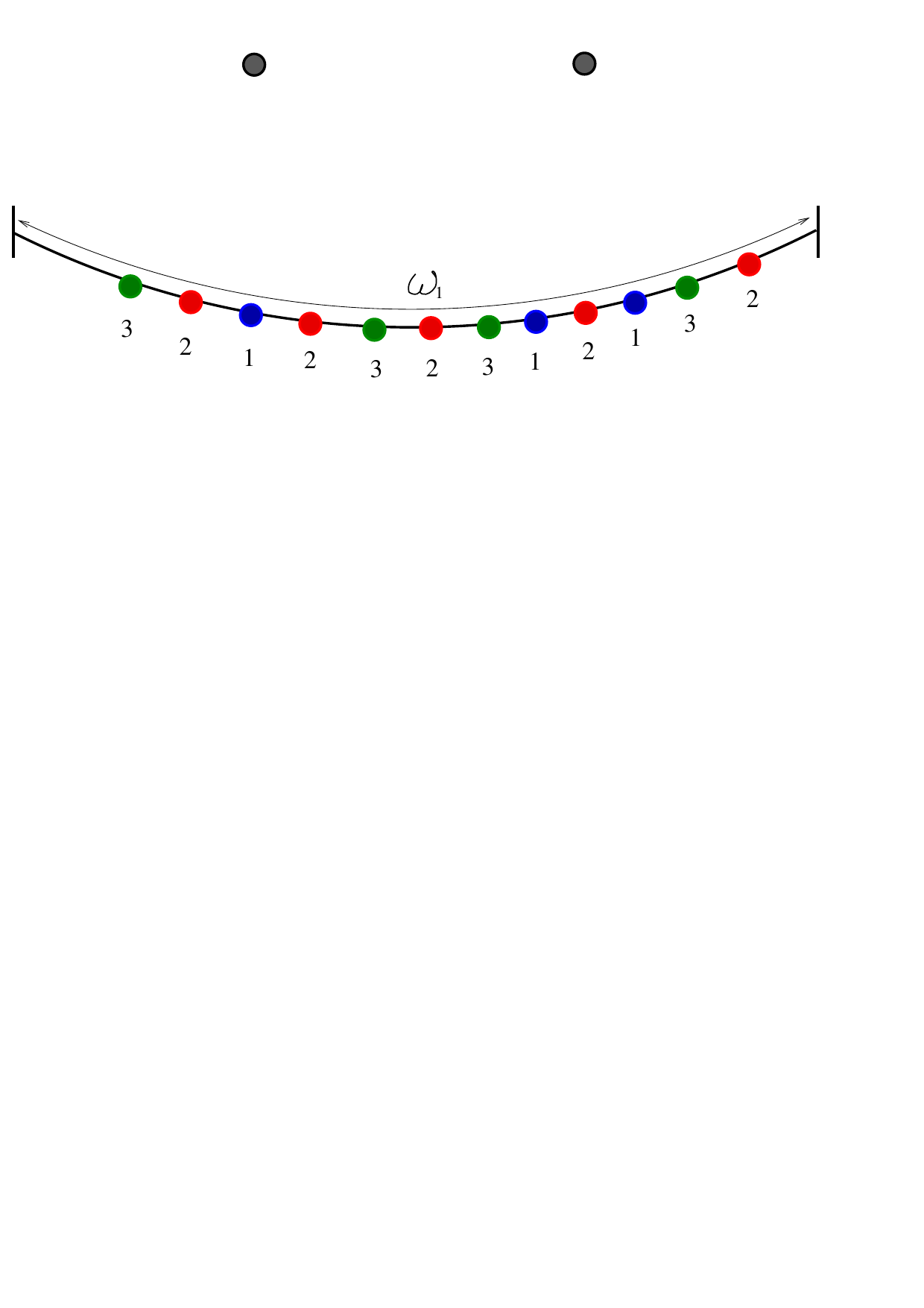}
\vskip -350pt
\caption{}\label{c11}
\end{figure}
 The blue, red and green dots stand for the intersections between $\beta_1$ and $\omega_1$, $\beta_2$ and $\omega_1$, and $\beta_3$ and $\omega_1$ respectively. Recall that we have the unique intersection pattern of $\beta_i'$ by the normal mapping $k_i$ in $E_1$ if $\beta_i'$ does not preserve the intersection pattern of $\beta_i$ in $E_1$. We consider the following cases for verifying this theorem.\\
   
 \textbf{Case 0.}  If $|\boldsymbol{\beta}\cap \partial E_1|=0$ then it is obvious that the theorem holds since $\boldsymbol{\beta}$ is the unique normal form of $\infty$ tangle. \\
  
  Now, we assume that $|\boldsymbol{\beta}\cap \partial E_1|\neq 0$.  There are two more cases  as follows.\\
 
\textbf{Case 1.}  There exists $\beta_i$ so that $\beta_i\cap\omega_1=\emptyset$ for some $i$.\\

 We note that  $\beta_j\cap\omega_1\neq\emptyset$ if $j\neq i$. Otherwise, $\beta_j\cap\omega_1=\emptyset$ for all $j\in\{1,2,3\}$ because of the normality. This is the case we already excluded.
  Without loss of generality, we assume that $\beta_3\cap\omega_1=\emptyset$ and $\beta_i\cap\omega_1\not=\emptyset$ if $i\neq 3$. 
Now, we claim that the following two statements are true. The first statement gives a proof of Theorem~\ref{T1}.
\vskip 10pt
\begin{enumerate}
\item All the original intersections between $\beta$ and $\omega_1$ would be preserved when we apply $k$ which is a composition  of normal mappings defined above.  
\item The  intersection patterns of both sides of all the intersections between $\beta$ and $\omega_1$ by $k$ are symmetric until another original intersections on both sides appear.
\end{enumerate}
\begin{proof}[Proof of the claim]
We use induction on $n$ to prove the claim, where $n$ is the number of normal mappings to construct $k$. We first prove that the claim is true if $n=1$ as follows.

 \begin{figure}[htb]
\includegraphics[scale=.6]{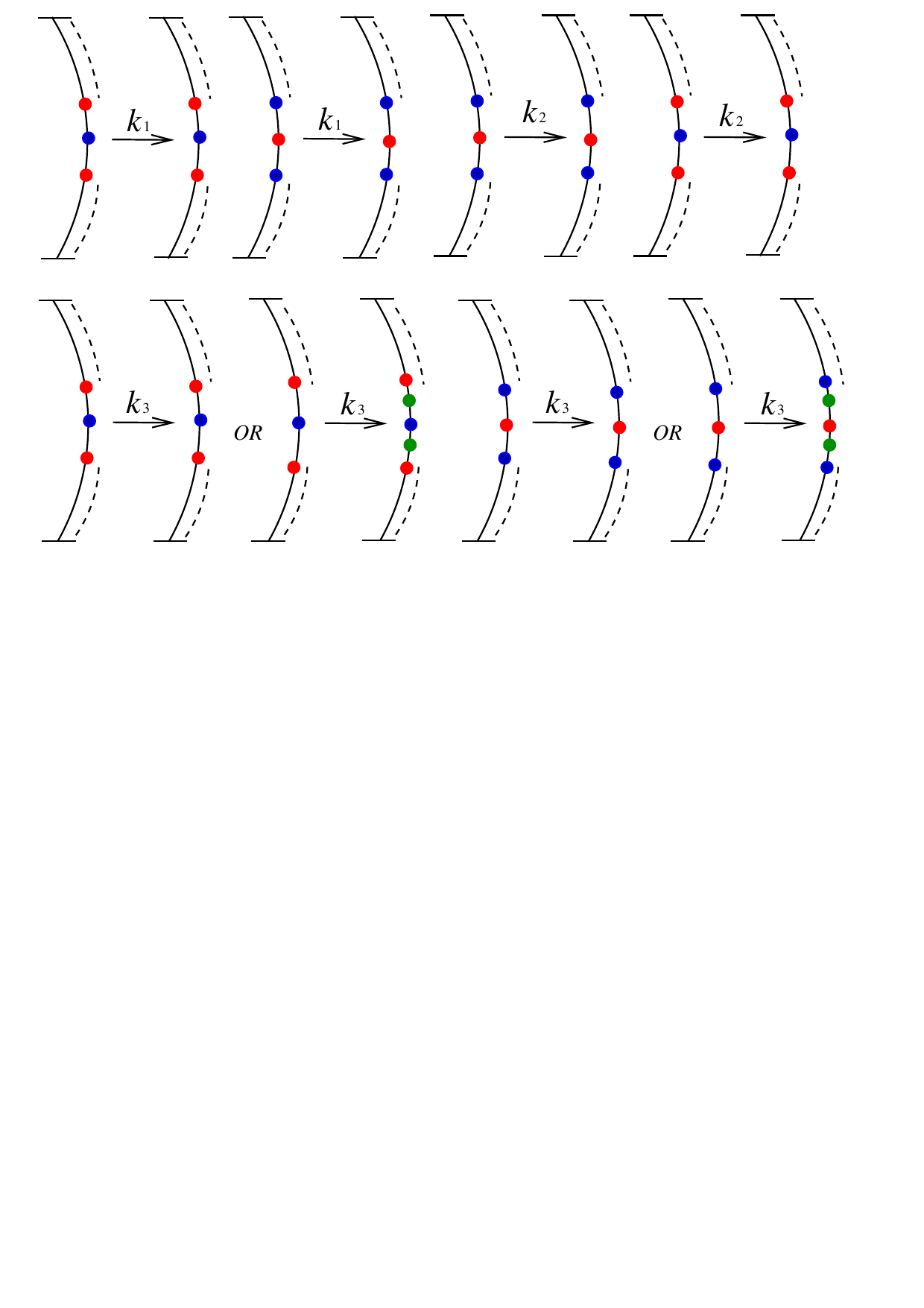}
\vskip -290pt
\caption{Case 1, $n=1$}\label{c7}
\end{figure}
  \begin{figure}[htb]
\includegraphics[scale=.7]{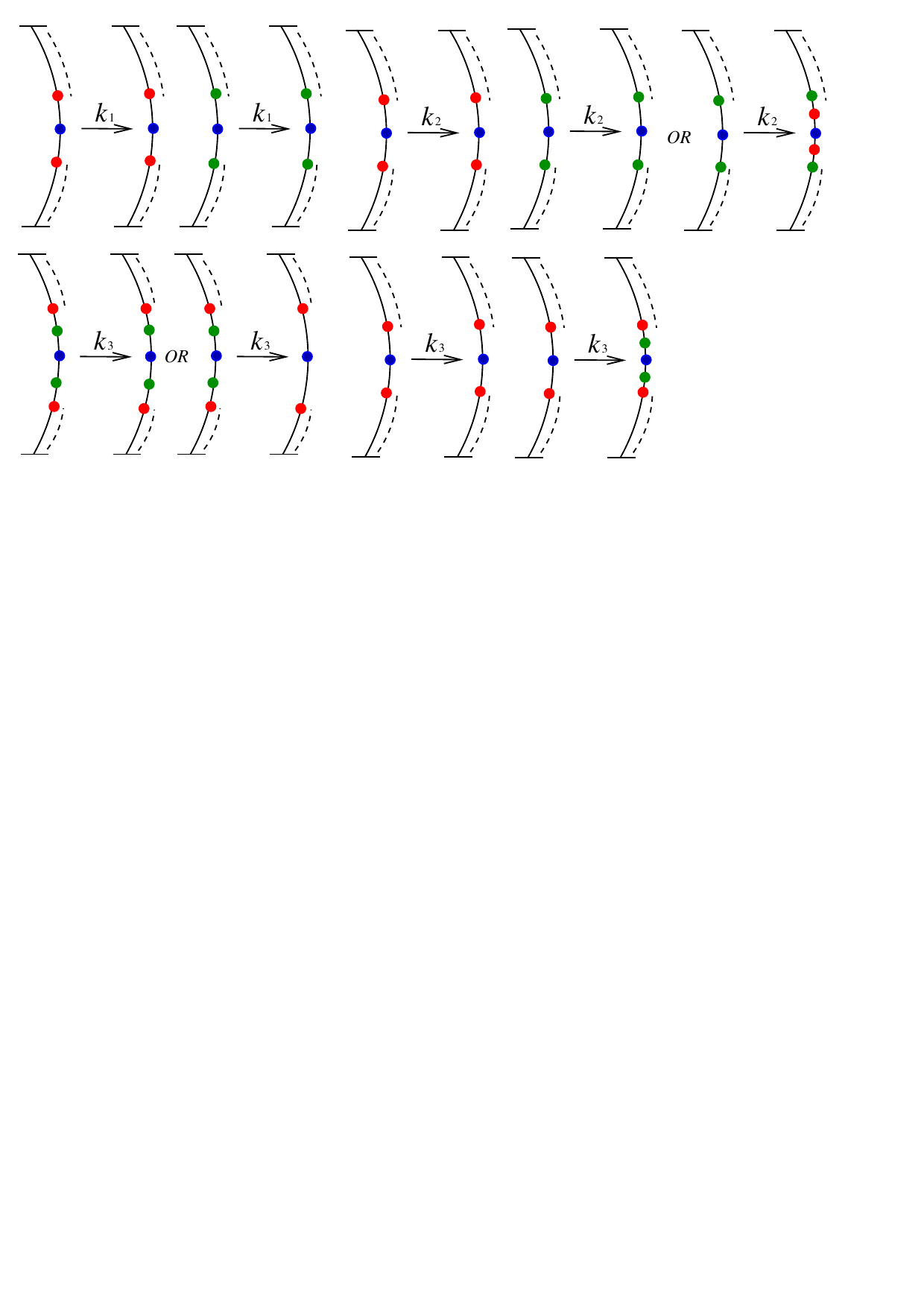}
\vskip -370pt
\caption{Case 1, $n=k$ ; in terms of a blue dot}\label{c7-2}
 \end{figure}
 
 We note that there is no change of the intersections on $\partial E_1$ if we apply $k_1$ or $k_2$ as in the upper two diagram of Figure~\ref{c7}. We note that the second condition works as well since there is no new intersections between existing intersections. If $k_3$ preserves the intersection pattern in $E$, then it is clear that the two conditions hold.  Now, suppose that $k_3$ is the normal mapping we apply and $k_3$ changes the intersection pattern in $E_1$. Then, a green dot appears in the middle of all the adjacent intersections between $\beta$ and $\omega_1$. Moreover, none of the original intersections  disappear.   This implies that the conditions $(1)$ and $(2)$ are satisfied when $n=1$.\\

Now, we assume that the two conditions $(1)$ and $(2)$ are satisfied when $n=k$. We now show that the conditions $(1)$ and $(2)$ are satisfied when $n=k+1$ as well. It is clear that the condition $(1)$ is satisfied when we apply additional normal mapping  since they satisfied the condition $(2)$ when $n=k$. Refer to Figure~\ref{c7-2}.    Moreover, the newly obtained or vanished intersections by the additional normal mapping are symmetric since the given patterns are symmetric based on the original intersections. This implies that the   right and left side of the original intersections are still symmetric after applying the additional normal mapping. Therefore, the claim works when $n=k+1$. This completes the proof of  the claim.

 \end{proof}

\textbf{Case 2.}  $\beta_i\cap \partial E_1\neq \emptyset$ for all $i\in\{1,2,3,\}$.\\

We also use induction on $n$ to cover this case. Recall that there are at most two normal mappings for $k_i$. However, there is unique intersecting pattern with $\partial E$ if the intersection pattern is different from the intersection pattern of $\beta$ with $\partial E$.
 It is clear that   $|\boldsymbol{\beta}\cap\partial E_1|\leq |k_{t_1}(\boldsymbol{\beta})\cap\partial E_1|$ since $\boldsymbol{\beta}$ is a minimal normal coordinate of $T$ with respect to $\partial E_1$. Now, we assume that $|\boldsymbol{\beta}\cap\partial E_1|\leq |k(\boldsymbol{\beta})\cap\partial E_1|$, where $k=k_{t_n}\circ k_{t_{n-1}}\circ\cdot\cdot\cdot\circ k_{t_1}$ such that $t_i\neq t_{i+1}$ for all $i\in\{1,2,...,n-1\}$ for $n\geq 2$. Let $k(\boldsymbol{\beta})=(\beta_1',\beta_2',\beta_3')$. 
Without loss of generality, we
 assume that $t_n=1$ and $t_{n+1}=2$ since it is a matter of the order of $\boldsymbol{\beta}$. 
 Let $(\beta_1'',\beta_2',\beta_3')=k_1^{-1}\circ k(\boldsymbol{\beta})=k_{t_{n-1}}\circ k_{t_{n-2}}\circ\cdot\cdot\cdot\circ k_{t_1}(\boldsymbol{\beta})$ and $(\beta_1',\beta_2'',\beta_3')=k_2\circ k(\boldsymbol{\beta})$. We use  blue, red and green colors for $\beta_1',\beta_2''$ and $\beta_3'$  respectively in the diagrams of Figure~\ref{c21}.
  We  claim that $|\beta_2'\cap \omega_1|\leq|\beta_2''\cap \omega_1|$.  We note that the claim implies that $|\boldsymbol{\beta}\cap\partial E_1|\leq |k(\boldsymbol{\beta})\cap\partial E_1|\leq |k_2\circ k(\boldsymbol{\beta})\cap\partial E_1|$ and this completes the induction. If the intersection patterns of  $ k(\boldsymbol{\beta})$ and $k_2\circ k(\boldsymbol{\beta})$  in $E_1$ are the same, then the claim is obvious. So, we assume that $ k(\boldsymbol{\beta})$ and $k_2\circ k(\boldsymbol{\beta})$  have different intersection patterns with $E_1$. Also, we assume that $k_1^{-1}\circ k(\boldsymbol{\beta})$ and $k(\boldsymbol{\beta})$  have different intersection patterns in $E_1$ for the argument below to prove the claim.
  If they have the same intersection pattern then we consider the previous normal coordinate in terms of the composition instead of  $k_1^{-1}\circ k(\boldsymbol{\beta})$  which is the first normal coordinate having a different intersection pattern with $E_1$ when we count the sequence of normal mappings for $k$ backward. If there is no such normal coordinate then the claim is clear since $\boldsymbol{\beta}$ is a minimal normal coordinate.
  \begin{proof}[Proof of the claim]
  
  The  dots on the first horizontal segment of each diagram of Figure~\ref{c21} depict the part of the intersections between $\omega_1$ and $\{\beta_1'',\beta_2',\beta_3'\}$. The dots on the third horizontal segment of each diagram of Figure~\ref{c21} present the part of the intersections between $\omega_1$ and $\{\beta_1',\beta_2'',\beta_3'\}$. If the number of intersections between $\beta_2'$ and $\omega_1$ is 1, then it is clear that $|\beta_2'\cap \omega_1|\leq|\beta_2''\cap \omega_1|$ since the intersection should be one of the endpoints of the arc connecting one of the punctures in $E_1$. We note that the arc is never vanished by a normal jump move if $|\boldsymbol{\beta}\cap\partial E_1|\neq \emptyset$. Now, we assume that $|\beta_2'\cap \omega_1|>1$. We investigate three types of two or three consecutive dots from a red dot as follows.\vskip 10pt
    \begin{figure}[htb]
  \includegraphics[scale=.87]{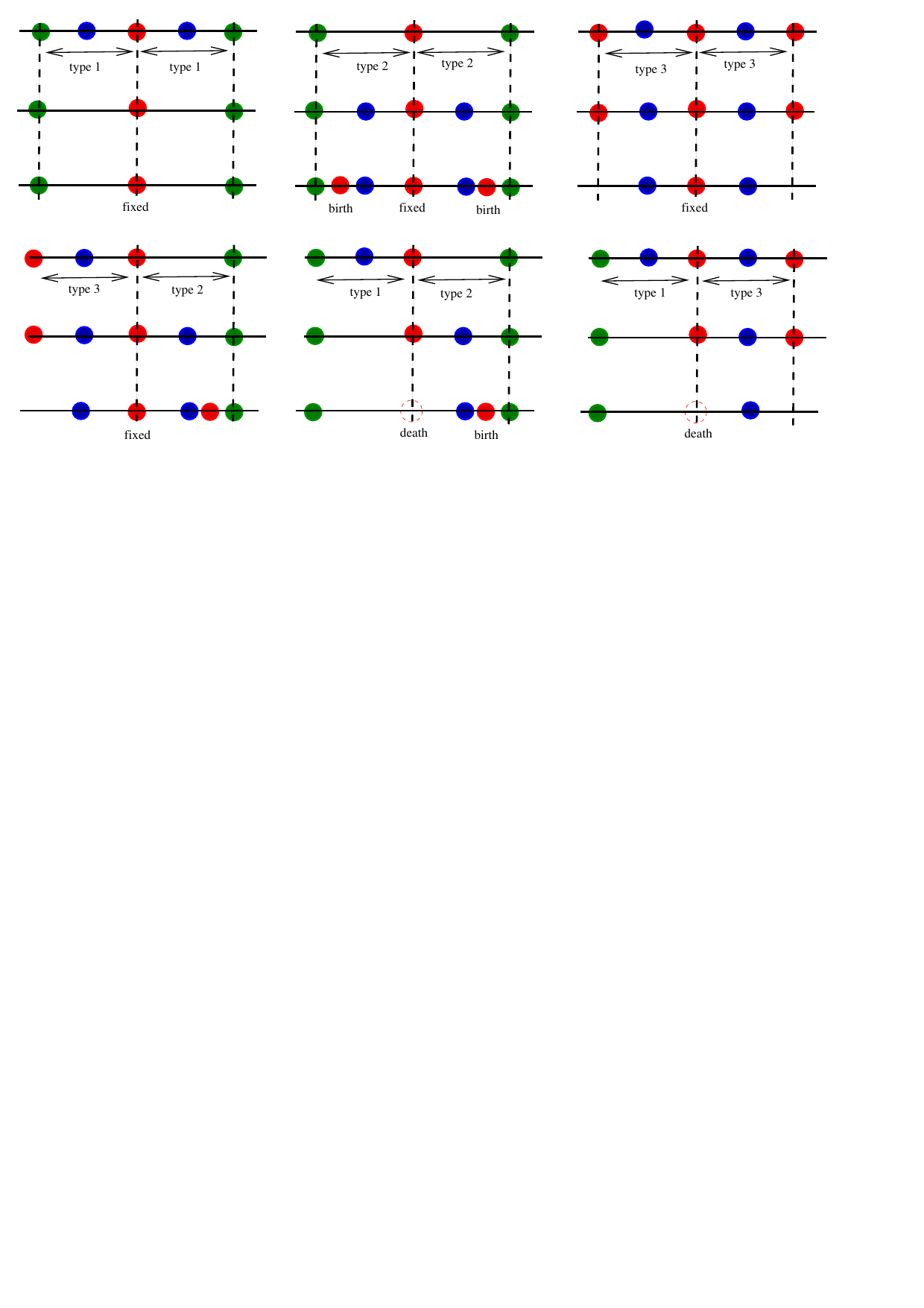}
  \vskip -470pt
  \caption{Changes of intersections by $k_2\circ k_1$}\label{c21}
  \end{figure}

  $\bullet$ Type 1 : the order of colors for the consecutive dots is red, blue and green from the left or from the right. 
  \vskip 10pt
 $\bullet$ Type 2 : the order of colors for the consecutive dots is red and green from the left or from the right. 
  \vskip 10pt
   $\bullet$ Type 3 : the order of colors for the consecutive dots is red, blue and red from the left or from the right. 
  
 \vskip 10pt

 The diagrams of Figure~\ref{c21} illustrate all the possible cases having the three types when we investigate the patterns of intersections between two consecutive red dots up to symmetry. The vertical dotted lines indicate some important corresponding dots from $(\beta_1'',\beta_2',\beta_3')$ to $(\beta_1',\beta_2',\beta_3')$ and  from $(\beta_1',\beta_2',\beta_3')$ to $(\beta_1',\beta_2'',\beta_3')$ when we apply $k_2\circ k_1$.\\

   First of all, we assume that a red dot is the common dot of two same types(type 1, type 2 or type 3) in $(k_1^{-1}\circ k(\boldsymbol{\beta}))\cap\omega_1$. We can check that the red dot is preserved for each case as in the upper diagrams of Figure~\ref{c21}.
   Now, we assume that a red dot is the common  dot of two different types as in the lower diagrams of Figure~\ref{c21}. We note that if the two types having the common red dot are type 2 and type 3 then the red dot is preserved as well. However, if the two types are type 1 and type 2, or type 1 and type 3 then the red dot would be vanished.
     We focus on the fact that the common type of the last two cases mentioned above is type 1. We note that the blue dot of the type 1 is vanished when we apply $k_1$.  We claim that there exist at least $n$ type 2 if there are $n$ type 1.   By the assumption that $|\boldsymbol{\beta}\cap\partial E_1|\leq |k(\boldsymbol{\beta})\cap\partial E_1|$, there are at least $n$ newborn blue dots if the number of vanished  blue dots  because of type 1 is $n$. We note that if the colors of the adjacent both sides of a blue  dot are the same then the blue dot would be preserved. The two patterns of type 1 are the only two cases that the blue dot is vanished when we apply $k_1$. Because of the normality, it is impossible to have  consecutive dots with the same color. Therefore, we should have at least $n$ type 2 in $|k_1\circ k(\boldsymbol{\beta})\cap \partial E_1|$.
      The diagrams of Figure~\ref{c21} contains all the cases which can have a type 2. Moreover, there is newborn red dot in the bottom row between the original green and red dots. So, we can make an injective mapping from  the red dots of  $|k_1\circ k(\boldsymbol{\beta})\cap \partial E_1|$ to the red dots of   $|k_2\circ k(\boldsymbol{\beta})\cap \partial E_1|$.  This implies that $|\beta_2'\cap \omega_1|\leq |\beta_2''\cap \omega_1|$ and it completes the claim.
  \end{proof}
 By Case $0$, Case $1$ and Case $2$, we completes the prove of Theorem~\ref{T1}.
\end{proof}

\begin{Cor}\label{C2}
Suppose that $\boldsymbol{\beta}$ is a minimal normal coordinate of a rational $3$-tangle $T$ with respect to $\partial E_1$ so that $|\boldsymbol{\beta}\cap\partial E_1|<|k_i(\boldsymbol{\beta})\cap \partial E_1|$ for all $i$. Then the minimal normal coordinate $\boldsymbol{\beta}$ of $T$ with respect to $\partial E_1$ is unique.
\end{Cor}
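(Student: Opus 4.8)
The plan is to prove uniqueness by contradiction, using a step-by-step monotonicity that is implicit in the proof of Theorem~\ref{T1}. Suppose $\boldsymbol{\beta}'$ is a second minimal normal coordinate of $T$ with respect to $\partial E_1$. By the global minimality remarked upon just before Theorem~\ref{T1}, neither $|\boldsymbol{\beta}\cap\partial E_1|$ nor $|\boldsymbol{\beta}'\cap\partial E_1|$ can be strictly smaller than the other, so they agree; denote this common value $m$, the minimum of $|\boldsymbol{\gamma}\cap\partial E_1|$ over all normal coordinates $\boldsymbol{\gamma}$ of $T$. If $m=0$ there is nothing to prove, since Case~$0$ in the proof of Theorem~\ref{T1} identifies $T$ as the $\infty$-tangle, whose normal form is unique; so I assume $m>0$ from now on.

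The first step is to observe that the strict hypothesis rules out Case~$1$ of Theorem~\ref{T1} for $\boldsymbol{\beta}$, i.e.\ it forces $\beta_i\cap\partial E_1\neq\emptyset$ for every $i$. If not, some $\beta_i$ misses $\omega_1$; since $m>0$, some other $\beta_j$ meets $\omega_1$, hence meets $\omega=\omega_1\cup\omega_2\cup\omega_3$, so the standard normal jump move $k_j^1$ exists. But the Case~$1$, $n=1$ analysis in the proof of Theorem~\ref{T1} shows that $k_j^1$ leaves the intersection with $\partial E_1$ unchanged, so $|k_j^1(\boldsymbol{\beta})\cap\partial E_1|=m$, contradicting $|\boldsymbol{\beta}\cap\partial E_1|<|k_j^1(\boldsymbol{\beta})\cap\partial E_1|$. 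Hence $\boldsymbol{\beta}$ is in the situation of Case~$2$.

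Next, by Theorem~\ref{T0} there is a sequence of normal jump moves carrying $\boldsymbol{\beta}$ to $\boldsymbol{\beta}'$; record it as a composition $\boldsymbol{\beta}'=k(\boldsymbol{\beta})$ with $k=k_{t_n}\circ\cdots\circ k_{t_1}$ of normal mappings, of minimal length $n$. By Lemma~\ref{L5} a run of consecutive $k_{t_\ell}$ sharing an index collapses to at most one normal mapping of that index; iterating this strictly shortens the composition, so a minimal-length expression automatically has $t_\ell\neq t_{\ell+1}$ for all $\ell$. Set $\boldsymbol{\beta}_0=\boldsymbol{\beta}$ and $\boldsymbol{\beta}_\ell=k_{t_\ell}(\boldsymbol{\beta}_{\ell-1})$, and suppose toward a contradiction that $n\geq1$. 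By the strict hypothesis, $|\boldsymbol{\beta}_1\cap\partial E_1|>m$. Now I would feed this reduced chain into the inductive step of Case~$2$ in the proof of Theorem~\ref{T1}: that argument, whose only inputs are that the base coordinate $\boldsymbol{\beta}_0$ is minimal and that consecutive moves change different indices, produces the step inequality $|\boldsymbol{\beta}_\ell\cap\partial E_1|\leq|\boldsymbol{\beta}_{\ell+1}\cap\partial E_1|$ for $1\leq\ell\leq n-1$ (this is exactly the middle inequality $|k(\boldsymbol{\beta})\cap\partial E_1|\leq|k_2\circ k(\boldsymbol{\beta})\cap\partial E_1|$ occurring there). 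Chaining these gives $m<|\boldsymbol{\beta}_1\cap\partial E_1|\leq|\boldsymbol{\beta}_n\cap\partial E_1|=|\boldsymbol{\beta}'\cap\partial E_1|=m$, which is absurd. So $n=0$ and $\boldsymbol{\beta}'=\boldsymbol{\beta}$.

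I expect the main obstacle to be making the reuse of Theorem~\ref{T1}'s Case~$2$ precise: one must reread its inductive step and confirm that it genuinely delivers the consecutive inequality $|\boldsymbol{\beta}_\ell\cap\partial E_1|\leq|\boldsymbol{\beta}_{\ell+1}\cap\partial E_1|$ (not merely $|\boldsymbol{\beta}_0\cap\partial E_1|\leq|\boldsymbol{\beta}_{\ell+1}\cap\partial E_1|$), using nothing about the intermediate coordinates beyond the colour-pattern bookkeeping on $\omega_1$. A lesser technical point is the Lemma~\ref{L5} reduction: collapsing an interior run to the identity may create a fresh pair of adjacent equal indices, so one iterates; since the number of normal mappings strictly drops each time, the process terminates and a shortest expression is reduced.
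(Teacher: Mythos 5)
Your argument is correct and is essentially the paper's own proof spelled out: the paper disposes of Corollary~\ref{C2} in one line by citing Lemma~\ref{L5} together with ``the argument in Theorem~\ref{T1}'', and what you do --- reduce a connecting sequence of normal mappings via Lemma~\ref{L5} so that consecutive indices differ, use the strict hypothesis to make the first step a strict increase, and extract the step-by-step inequality $|\boldsymbol{\beta}_\ell\cap\partial E_1|\leq|\boldsymbol{\beta}_{\ell+1}\cap\partial E_1|$ from the Case~2 induction of Theorem~\ref{T1} to chain to a contradiction --- is exactly that argument. Your closing caveat is well placed, since the statement of Theorem~\ref{T1} alone would not suffice and one really does need the consecutive inequality established inside its proof, but that inequality is indeed what the claim $|\beta_2'\cap\omega_1|\leq|\beta_2''\cap\omega_1|$ there provides.
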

The proof of Corollary~\ref{C2} is obvious by Lemma~\ref{L5} and the argument in Theorem~\ref{T1}. If a minimal normal coordinate $\boldsymbol{\beta}$ of a rational $3$-tangle $T$ with respect to $\partial E_1$ satisfies $|\boldsymbol{\beta}\cap\partial E_1|<|k_i(\boldsymbol{\beta})\cap \partial E_1|$ for all $i$ then it is called the \textit{strict} minimal normal coordinate of $T$ with respect to $\partial E_1$.\\

Recall that if a minimal normal coordinate $\boldsymbol{\beta}$ stands for the $\infty$ tangle then it is unique normal form. So, there is no normal mapping $k_i$ for any $i$. If  one of the bridge arcs of a minimal normal coordinate $\boldsymbol{\beta}$ not presenting the $\infty$ tangle is contained in $E_i$, then some of $k_j$ cannot exist. In this case, we have $|\boldsymbol{\beta}\cap\partial E_1|=|k_i(\boldsymbol{\beta})\cap \partial E_1|$ for the existing $k_j$. Moreover, it is impossible to have $|\boldsymbol{\beta}\cap\partial E_1|=|k_i(\boldsymbol{\beta})\cap \partial E_1|$ for all $i\in\{1,2,3\}$ if there exist normal mapping $k_1,k_2$ and $k_3$ by the  lemma below. 

\begin{Lem}\label{T3}
Let $\boldsymbol{\beta}=(\beta_1,\beta_2,\beta_3)$ be a minimal normal coordinate of a rational $3$-tangle $T$ with respect to $\partial E_1$. Suppose that $k_1, k_2$ and $k_3$  exist. Then there exists $k_i$ so that $|\boldsymbol{\beta}\cap\partial E_1|<|k_i(\boldsymbol{\beta})\cap \partial E_1|$.
\end{Lem}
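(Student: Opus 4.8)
The plan is to prove the contrapositive statement made in the paragraph preceding the lemma: assuming that $\boldsymbol{\beta}$ is a minimal normal coordinate of $T$ with respect to $\partial E_1$ and that $k_1,k_2,k_3$ all exist, I would suppose toward a contradiction that $|\boldsymbol{\beta}\cap\partial E_1|=|k_i(\boldsymbol{\beta})\cap\partial E_1|$ for every $i\in\{1,2,3\}$.

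First I would carry out the standard reductions. Because all of $k_1,k_2,k_3$ exist, $\boldsymbol{\beta}$ does not represent the $\infty$ tangle and, by the discussion preceding the lemma, no $\beta_i$ is contained in a single $E_j$, so every $\beta_i$ meets $\partial E$. I then distinguish whether $\beta$ contains a subarc joining $E_s$ and $E_t$ for all three pairs $\{s,t\}$. If some pair, say $\{E_2,E_3\}$, is not so joined, I would invoke Lemma~\ref{L-1}, which reduces the desired strict inequality for $\partial E_1$ to the strict inequality $|\partial E\cap\beta|<|\partial E\cap k_i(\beta)|$ for a suitable $i$; the latter can be read off from the outlet configuration just as in the proof of Theorem~\ref{T-2} in~\cite{3}. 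If instead $\beta$ realizes all three inter-$E$ connections, then by Lemma~\ref{L-2} each $\beta_i$ admits a \emph{unique} normal jump move $k_i=k_i^1$ (so that the alternative of Lemma~\ref{L1}, in which $k_i^1$ alone would already force a strict increase of $|\partial E_1\cap\beta|$, does not occur), and each $\beta_i'$ has exactly one outlet whose two exit intersections carry the same colour, as in the proof of Lemma~\ref{L-2}.

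The core of the proof is the latter case, handled by analyzing the word of coloured dots read along the window $\omega_1$ (blue, red, green for $\beta_1,\beta_2,\beta_3$ as in Figure~\ref{c11}), which by normality has no two adjacent equal letters. For an index $i$ whose move $k_i$ changes the intersection pattern of $\beta$ with $E$, rules (1)--(3) following Theorem~\ref{T-2} express the change of $|\partial E_1\cap\beta|$ under $k_i$ as $A_i-R_i$, where $A_i$ is the number of adjacent pairs of $\omega_1$ coloured with the two colours other than $i$ and $R_i$ is the number of $i$-letters of $\omega_1$ flanked by the two distinct other colours; minimality already forces $A_i\ge R_i$ for each such $i$, so my contradiction hypothesis gives $A_i=R_i$ whenever $k_i$ changes the pattern, and pattern-preservation for the remaining indices. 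I would then show — by distinguishing whether all three colours occur in the $\omega_1$-word (as in Case~2 of the proof of Theorem~\ref{T1}) or one colour is absent (as in Case~1 there, where the absent-colour move forces $A_i\ge 1$ since the word then alternates between the two present colours and $|\partial E_1\cap\beta|\ge 2$, both punctures of $E_1$ being reached by $\beta$) — that the constraints ``$A_i=R_i$ for the three pattern-changing indices'', ``exactly one same-coloured outlet per $\beta_i'$'', and ``all three inter-$E$ connections present'' cannot hold simultaneously.

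The step I expect to be the main obstacle is precisely this last incompatibility, together with the closely related question of deciding, for the unique move $k_i$, whether it is of the pattern-preserving or the pattern-changing type. Both are governed by how the subarcs issuing from the different-coloured outlets of $\beta_i'$ are forced to recross $\partial E_1$, and I would settle them by transplanting the outlet analysis of~\cite{3} (cf.\ Figures~\ref{outlet}, \ref{outlet2}, and the proofs of Theorems~\ref{T-2} and~\ref{T1}) to the present situation; any configuration not detectable from the $\omega_1$-word alone I would again route through Lemma~\ref{L-1}, upgrading a strict increase of $|\partial E\cap\beta|$ to a strict increase of $|\partial E_1\cap\beta|$.
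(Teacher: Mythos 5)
Your overall strategy matches the paper's: argue by contradiction from $|\boldsymbol{\beta}\cap\partial E_1|=|k_i(\boldsymbol{\beta})\cap\partial E_1|$ for all $i$, dispose of the degenerate cases (the $\infty$ tangle, a bridge arc contained in some $E_j$, a bridge arc disjoint from $\partial E_1$), reduce to the case $\beta_i\cap\partial E_1\neq\emptyset$ for all $i$ with $k_i=k_i^1$, and then count appearing and disappearing dots in the coloured word along $\omega_1$ using rules (1)--(3). Your quantities $A_i$ and $R_i$ are essentially the right bookkeeping, and the resulting constraints $A_i=R_i$ are exactly what the paper exploits. However, you explicitly defer the decisive step --- showing that the three equalities are mutually incompatible --- calling it ``the main obstacle'' and proposing to settle it by ``transplanting the outlet analysis'' of the references. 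That step is the entire content of the lemma, so as written the proposal has a genuine gap: nothing in what you wrote actually produces the contradiction.

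The paper closes this gap with a concrete combinatorial argument that you should supply. It uses the arc component of $\beta$ joining the two punctures of $E_1$: its endpoints $a,b$ on $\partial E_1$ are preserved by every normal mapping and split the intersection pattern into two symmetric half-circles. Taking $a$ to lie on $\beta_1$ (blue), one first shows that the equality for $k_1$ forces some pair of consecutive blue dots to have exactly one dot between them, i.e.\ a \textbf{BRB} or \textbf{BGB} configuration exists (otherwise an unoccupied site $X$ would make $|k_1(\boldsymbol{\beta})\cap\partial E_1|$ strictly larger). One then classifies the gaps between consecutive blue dots into the types \textbf{BRB}, \textbf{BGB}, \textbf{BR-RB}, \textbf{BG-GB} and reads off, from the equalities for $k_3$ and $k_2$ respectively, the linear relations $n_{\textbf{BRB}}+n_{\textbf{BR-RB}}=n_{\textbf{BG-GB}}$ and $n_{\textbf{BR-RB}}=n_{\textbf{BGB}}+n_{\textbf{BG-GB}}$; combining these forces $n_{\textbf{BRB}}=n_{\textbf{BGB}}=0$, contradicting the existence established in the first step. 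Without this (or an equivalent) explicit incompatibility argument, your proof is a plan rather than a proof; note also that the paper does not need your case split on whether all three inter-$E$ connections are present, since the type-counting takes place entirely inside the window $\omega_1$.
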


\begin{proof}
For a contradiction for this lemma, we assume that $|\boldsymbol{\beta}\cap\partial E_1|=|k_i(\boldsymbol{\beta})\cap \partial E_1|$ for all $i\in\{1,2,3\}$. 
\begin{figure}[htb]
\includegraphics[scale=.3]{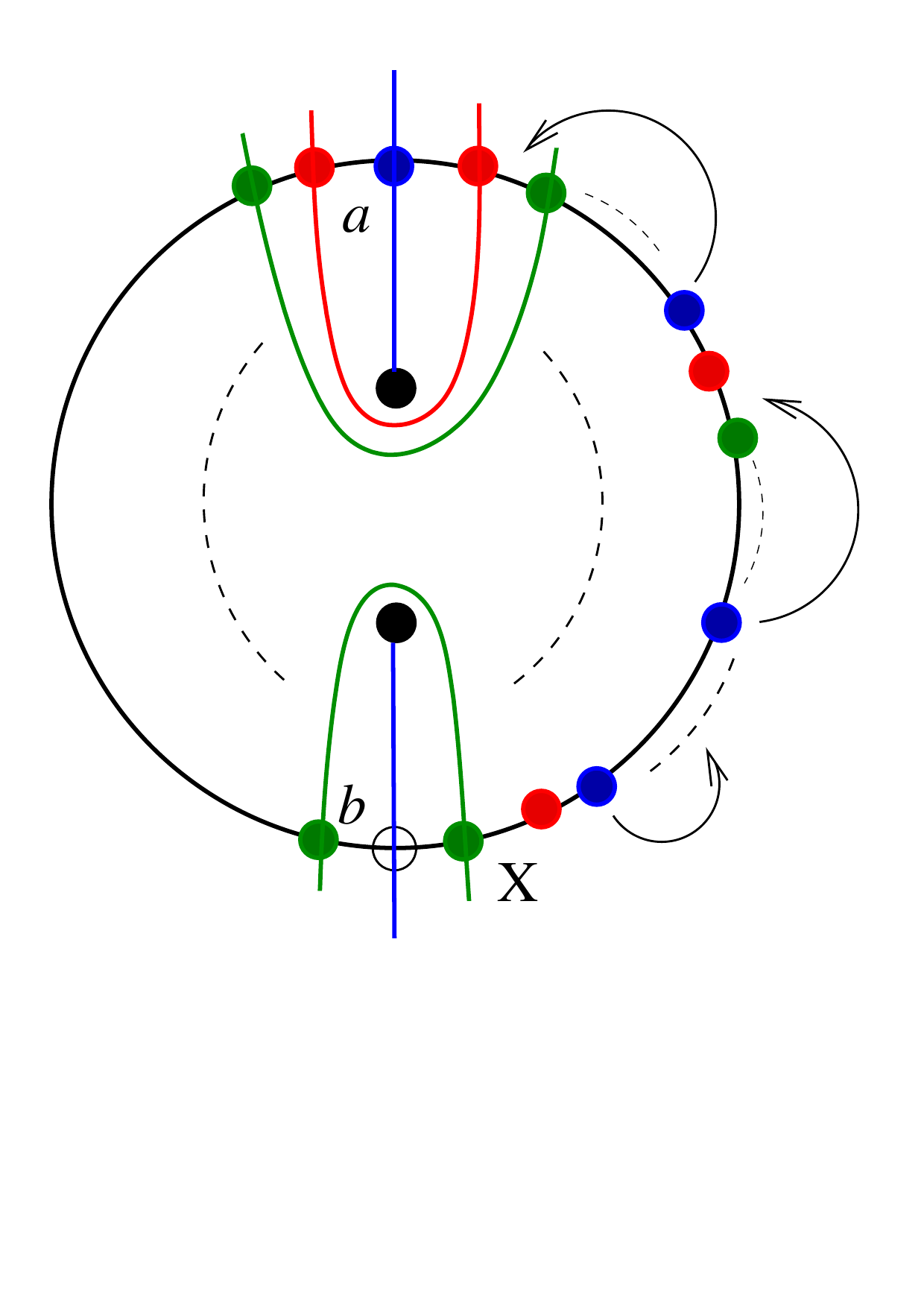}
\vskip -70pt
\caption{A schematic diagram for the intersections between $\boldsymbol{\beta}$ and $\partial E_1$}\label{c8}
\end{figure}
We can exclude the case that $\boldsymbol{\beta}\cap\partial E_1=\emptyset$ because of the previous comment. \\

Now, we assume that there exists $\beta_i$ so that $\beta_i\cap \partial E_1=\emptyset$. Then, we note that  $\beta_j\cap \partial E_1\neq\emptyset$ if $i\neq j$ since  $\boldsymbol{\beta}$ is a normal coordinate. Then it is clear that $|\boldsymbol{\beta}\cap\partial E_1|<|k_i(\boldsymbol{\beta})\cap \partial E_1|$ since there exists $k_i$ by the assumption. It is possible to have $|\boldsymbol{\beta}\cap\partial E_1|=|k_i(\boldsymbol{\beta})\cap \partial E_1|$ if $\beta_i\subset E_k$ for some $k$. However, there is no $k_i$ in this case.\\

 Now, we assume that $\beta_i\cap\partial E_1\neq \emptyset$ for all $i$. Then we note that there exist $k_1^1, k_2^1$ and $k_3^1$ which are standard normal mappings. Let $k_1=k_1^1, k_2=k_2^1$ and $k_3=k_3^1$. We note  that then $k_1, k_2$ and $k_3$ do not preserve the intersection pattern in $E_1$.
Let $a$ and $b$ be the two intersections between $\beta(=\beta_1\cup\beta_2\cup\beta_3)$ and $\partial E_1$ which are the two endpoints of the arc components  connecting the two punctures in $E_1$ directly. We note that the pattern of the dots in the two half circles between $a$ and $b$ are symmetric as in the diagram of Figure~\ref{c8}. We want to point out that the diagram is a schematic diagram to explain the situation. In the diagram, we use  blue, red and green colors to distinguish the dots coming from the three different $\beta_i$. Without loss of generality, $a$ belongs to $\beta_1$(blue). We give the orders of dots with the same color from upper dot close to $a$ by following the right half circle continuously. We investigate the ordered sequence of blue dots. We claim that if all of the consecutive ordered two blue dots from $a$ contain at least two dots then it is impossible to have $|\boldsymbol{\beta}\cap\partial E_1|=|k_1(\boldsymbol{\beta})\cap \partial E_1|$. We note that there exists a corresponding position for each blue dot   when we consider $k_1(\boldsymbol{\beta})$ as in the diagram above. Moreover, under the condition above, there is at least one unoccupied site named $X$ as in the diagram of Figure~\ref{c8}.
 \begin{figure}[htb]
\includegraphics[scale=.7]{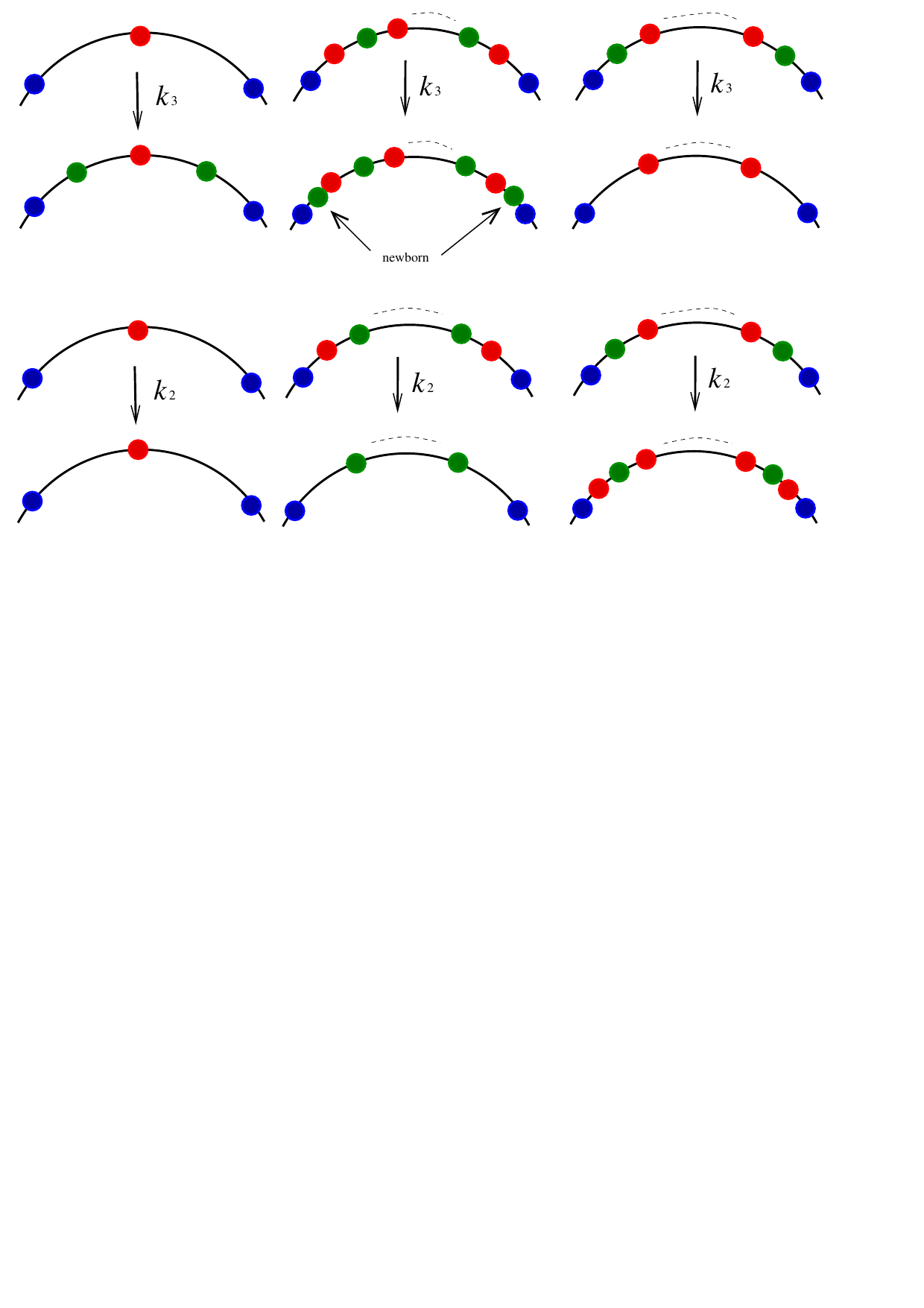}
\vskip -350pt
\caption{Changes of types}\label{c22}
 
\end{figure}
 We also note that $a$ and $b$ would be preserved since the  adjacent both sides have the intersections with same color. So, there exists at least one pair of ordered two blue dots which contains a single dot with a different color.  Suppose that the only dot between the two blue dots is a red dot which belongs to $\beta_2$ without loss of generality. It is called a \textbf{BRB} type. We have two newborn green dots in the middles between the blue and the red dot in this case when we have $k_3(\boldsymbol{\beta})$. We also can define  a \textbf{BGB} type.\\
 
   We claim that there exists a pair of consecutive  blue dots which contains odd number of dots between them and the first and the last dots are green if it satisfies $|\boldsymbol{\beta}\cap\partial E_1|=|k_3(\boldsymbol{\beta})\cap \partial E_1|$. 
We note that if a pair of consecutive blue dots contains even number of dots between them the number of green dots is preserved when we have $k_3(\boldsymbol{\beta})$. Therefore, there exists  a pair of consecutive blue dots which contains odd number of dots since $\mathbf{BRB}$ types make newborn green dots. Moreover, if the first and the last dots are red then we have two newborn green dots when we apply $k_3$ to $\boldsymbol{\beta}$ as in the middle diagram of Figure~\ref{c8}. This completes the proof of the claim.\\

 The type of the claim is called a \textbf{BG-GB} type. Similarly, we can define a \textbf{BR-RB} type.  We note that  the number of \textbf{BRB} and \textbf{BR-RB} types is the same with the number of \textbf{BG-GB} type if $\boldsymbol{\beta}$ satisfies the condition that $|\boldsymbol{\beta}\cap\partial E_1|=|k_3(\boldsymbol{\beta})\cap \partial E_1|$.
However,  this makes a contradiction since it is impossible to have $|\boldsymbol{\beta}\cap\partial E_1|=|k_2(\boldsymbol{\beta})\cap \partial E_1|$.  We note that \textbf{BRB} types do
 not make any newborn red dot when we apply $k_2$. So, the number of \textbf{BR-RB} type is the same with the number of \textbf{BGB} and \textbf{BG-GB} types. However, it is impossible since the number of \textbf{BRB} type is not zero. This completes the poof of this lemma.

\end{proof}
 
Recall that $\boldsymbol{\beta}=(\beta_1,\beta_2,\beta_3)$ is a minimal normal coordinate of a rational 3-tangle $T$ with respect to $\partial E_1$ and $\beta=\beta_1\cup\beta_2\cup\beta_3$. We note that there is no $k_i^2$ which preserves the intersection pattern of $\beta$ with $\partial E_1$ if $|\boldsymbol{\beta}\cap\partial E_1|<|k_i^1(\boldsymbol{\beta})\cap \partial E_1|$ because of Lemma~\ref{L1}.
Now, we investigate the cases that  $|\boldsymbol{\beta}\cap\partial E_1|=|k_i(\boldsymbol{\beta})\cap \partial E_1|$ for some $i$. By Lemma~\ref{T3}, it is impossible to have $|\boldsymbol{\beta}\cap\partial E_1|=|k_i(\boldsymbol{\beta})\cap \partial E_1|$ for all $i$.

\begin{Thm}\label{T4} Let $\boldsymbol{\beta}=(\beta_1,\beta_2,\beta_3)$ be a minimal normal coordinate of a rational 3-tangle $T$ with respect to $\partial E_1$ so that $\beta_i\cap\partial E_1\neq \emptyset$ for all $i\in\{1,2,3\}$.
Suppose that $|\boldsymbol{\beta}\cap\partial E_1|=|k_1(\boldsymbol{\beta})\cap \partial E_1|$.  Then  $|\boldsymbol{\beta}\cap\partial E_1|<|k_2\circ k_1(\boldsymbol{\beta})\cap \partial E_1|$ or  $|\boldsymbol{\beta}\cap\partial E_1|<|k_3\circ k_1(\boldsymbol{\beta})\cap \partial E_1|$. Moreover, if $|\boldsymbol{\beta}\cap\partial E_1|<|k_i(\boldsymbol{\beta})\cap \partial E_1|$ for $i=2,3$ and $|\boldsymbol{\beta}\cap\partial E_1|<|k_j\circ k_1(\boldsymbol{\beta})\cap \partial E_1|$ for $j=2,3$ then there are only two minimal normal coordinates of $T$ with respect to $\partial E_1$.
\end{Thm}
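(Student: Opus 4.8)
The plan is to push the whole argument through the auxiliary normal coordinate $\boldsymbol{\beta}':=k_1(\boldsymbol{\beta})$, using Lemma~\ref{L1}, Lemma~\ref{T3}, Theorem~\ref{T1} and Theorem~\ref{T0}. First I would pin down $k_1$ exactly. Since $\beta_1\cap\partial E_1\neq\emptyset$ the standard normal jump move $k_1^1$ exists; if $k_1^2$ also existed, Lemma~\ref{L1} would give $|\boldsymbol{\beta}\cap\partial E_1|<|k_1^1(\boldsymbol{\beta})\cap\partial E_1|$ and $|\boldsymbol{\beta}\cap\partial E_1|<|k_1^2(\boldsymbol{\beta})\cap\partial E_1|$, contradicting $|\boldsymbol{\beta}\cap\partial E_1|=|k_1(\boldsymbol{\beta})\cap\partial E_1|$. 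Hence $k_1^2$ does not exist, $k_1=k_1^1$, and $k_1^1$ preserves $|{\cdot}\cap\partial E_1|$. Write $\boldsymbol{\beta}'=(\beta_1',\beta_2,\beta_3):=k_1^1(\boldsymbol{\beta})$. Since $\beta_2,\beta_3$ are unchanged and the total number of points of $\partial E_1$ is unchanged, $|\beta_1'\cap\partial E_1|=|\beta_1\cap\partial E_1|\geq 1$, so $\boldsymbol{\beta}'$ again satisfies the hypotheses; moreover $\boldsymbol{\beta}'$ is again minimal because $|\boldsymbol{\beta}'\cap\partial E_1|=|\boldsymbol{\beta}\cap\partial E_1|$, which by Theorem~\ref{T1} (with Lemma~\ref{L5}) is the global minimum of $|{\cdot}\cap\partial E_1|$ over all normal coordinates of $T$; and $k_1^1(\boldsymbol{\beta}')=\boldsymbol{\beta}$ since $(k_1^1)^2=\mathrm{id}$. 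Re-running the first step for $\boldsymbol{\beta}'$ shows $k_1^2$ does not exist for $\boldsymbol{\beta}'$ either. Finally $\boldsymbol{\beta}'\neq\boldsymbol{\beta}$, since the standard normal jump move on $\beta_1$ changes the isotopy class of $\beta_1$.

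For the first assertion I would apply Lemma~\ref{T3} to $\boldsymbol{\beta}'$: each of $\beta_1',\beta_2,\beta_3$ meets $\partial E_1$, so $k_1^1,k_2^1,k_3^1$ all exist for $\boldsymbol{\beta}'$, and Lemma~\ref{T3} (whose proof produces a standard mapping) yields an index $i$ with $|\boldsymbol{\beta}'\cap\partial E_1|<|k_i^1(\boldsymbol{\beta}')\cap\partial E_1|$. This index cannot be $1$, because $k_1^1(\boldsymbol{\beta}')=\boldsymbol{\beta}$ is intersection preserving and $k_1^2$ does not exist for $\boldsymbol{\beta}'$, so no normal mapping on the first coordinate of $\boldsymbol{\beta}'$ increases $|{\cdot}\cap\partial E_1|$. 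Hence $i\in\{2,3\}$, and $k_i^1(\boldsymbol{\beta}')=k_i^1\circ k_1^1(\boldsymbol{\beta})$; since $|\boldsymbol{\beta}\cap\partial E_1|=|\boldsymbol{\beta}'\cap\partial E_1|$ this gives $|\boldsymbol{\beta}\cap\partial E_1|<|k_2\circ k_1(\boldsymbol{\beta})\cap\partial E_1|$ or $|\boldsymbol{\beta}\cap\partial E_1|<|k_3\circ k_1(\boldsymbol{\beta})\cap\partial E_1|$, which is the first assertion.

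For the second assertion I would first translate the extra hypotheses through $k_j\circ k_1(\boldsymbol{\beta})=k_j(\boldsymbol{\beta}')$ and $|\boldsymbol{\beta}\cap\partial E_1|=|\boldsymbol{\beta}'\cap\partial E_1|$: every normal mapping on $\beta_2$ or $\beta_3$ strictly increases $|{\cdot}\cap\partial E_1|$ from $\boldsymbol{\beta}$, and every normal mapping on the second or third coordinate of $\boldsymbol{\beta}'$ strictly increases it from $\boldsymbol{\beta}'$. Combined with the first paragraph, each of $\boldsymbol{\beta}$ and $\boldsymbol{\beta}'$ admits exactly one normal mapping preserving $|{\cdot}\cap\partial E_1|$, namely $k_1^1$, and it carries one of $\boldsymbol{\beta},\boldsymbol{\beta}'$ to the other. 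Let $G$ be the graph whose vertices are the minimal normal coordinates of $T$ with respect to $\partial E_1$ and whose edges join two vertices related by a normal mapping; since any two minimal normal coordinates realize the same global minimum of $|{\cdot}\cap\partial E_1|$, every such mapping is intersection preserving, so the previous sentence says precisely that $\{\boldsymbol{\beta},\boldsymbol{\beta}'\}$, with the single $k_1^1$-edge, is already a connected component of $G$. Thus the second assertion is equivalent to the statement that $G$ is connected.

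To prove $G$ connected I would take any two minimal normal coordinates and, by Theorem~\ref{T0}, a finite sequence of normal jump moves $\boldsymbol{\beta}^0,\dots,\boldsymbol{\beta}^m$ joining them; among all such sequences choose one that minimizes, lexicographically, the non-increasing rearrangement of the list $(|\boldsymbol{\beta}^0\cap\partial E_1|,\dots,|\boldsymbol{\beta}^m\cap\partial E_1|)$, with ties broken by $m$, and (using Lemma~\ref{L5} to collapse runs of moves on a common index) with successive moves on distinct indices. I claim such an optimal sequence never leaves the minimal level: otherwise some $\boldsymbol{\beta}^\ell$ has value strictly above the global minimum, and taking $[a,b]$ a maximal run realizing the maximal value $N$ along the sequence, we have $a\geq 1$, $b\leq m-1$, and $\boldsymbol{\beta}^{a-1},\boldsymbol{\beta}^{b+1}$ of value $<N$, so the normal mappings flanking the run strictly decrease $|{\cdot}\cap\partial E_1|$; analyzing these decreasing moves inside the window $\omega_1$ — via the blue/red/green dot bookkeeping, the left–right symmetry across preserved intersections, and the injective correspondences of colored dots from the proofs of Theorem~\ref{T1} and Lemma~\ref{T3} — one replaces the portion $\boldsymbol{\beta}^{a-1},\dots,\boldsymbol{\beta}^{b+1}$ by a sequence of strictly smaller cost that removes the excursion above $N$, contradicting optimality. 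Hence the optimal sequence stays at the minimal level, so it is a path in $G$; therefore $G$ is connected, $G=\{\boldsymbol{\beta},\boldsymbol{\beta}'\}$, and $T$ has exactly two minimal normal coordinates with respect to $\partial E_1$. The main obstacle is exactly this last local-surgery step: converting a run of maximal value with strictly decreasing normal mappings on its two flanks into an explicit lower-cost shortcut requires a finer case analysis of the window patterns than, though of the same flavor as, the dot-type analysis already developed, and that is where essentially all of the work lies.
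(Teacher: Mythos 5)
Your treatment of the first assertion is correct and is essentially the paper's own argument: $k_1=k_1^1$ (no $k_1^2$ by Lemma~\ref{L1}), $\boldsymbol{\beta}'=k_1(\boldsymbol{\beta})$ is again a minimal normal coordinate with all three arcs meeting $\partial E_1$, and Lemma~\ref{T3} applied to $\boldsymbol{\beta}'$ forces the strictly increasing index to lie in $\{2,3\}$ because $(k_1^1)^2(\boldsymbol{\beta})=\boldsymbol{\beta}$ rules out index $1$.

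The second assertion is where your proposal has a genuine gap. You reduce it to connectedness of the graph $G$ of minimal normal coordinates and propose to prove connectedness by an excursion-removal surgery on the sequences supplied by Theorem~\ref{T0}; you concede that this surgery step is unproven and ``is where essentially all of the work lies.'' As stated, that step is the entire content of the claim, and nothing you cite delivers it. What you are missing is that the surgery is unnecessary: the induction in Case 2 of the proof of Theorem~\ref{T1} establishes more than the stated inequality $|\boldsymbol{\beta}\cap\partial E_1|\le|k(\boldsymbol{\beta})\cap\partial E_1|$ --- its inductive claim $|\beta_2'\cap\omega_1|\le|\beta_2''\cap\omega_1|$ says that along any alternating sequence of normal mappings issuing from a minimal normal coordinate the quantity $|\cdot\cap\partial E_1|$ is non-decreasing at \emph{every} step. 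Hence a sequence from $\boldsymbol{\beta}$ to another minimal normal coordinate (put into alternating form via Lemma~\ref{L5}) can never rise above the minimum at all: every step must preserve $|\cdot\cap\partial E_1|$. Under the extra hypotheses the only count-preserving move out of $\boldsymbol{\beta}$ is $k_1^1$, and out of $k_1^1(\boldsymbol{\beta})$ every admissible next move has index in $\{2,3\}$ and strictly increases the count, so the sequence terminates after one step and the only minimal normal coordinates are $\boldsymbol{\beta}$ and $k_1^1(\boldsymbol{\beta})$. This monotonicity is the (tersely stated) route the paper takes; your graph-theoretic reformulation is fine, but without it your proof of the second assertion is incomplete.
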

\begin{proof}
We note that we have $k_i^1$ for all $i\in\{1,2,3\}$ if  $\beta_j\cap\partial E_1\neq \emptyset$ for all $j\in\{1,2,3\}$. We also note that there is no $k_i^2$ by Lemma~\ref{L1}. By Lemma~\ref{T3}, it is clear that $|\boldsymbol{\beta}\cap\partial E_1|<|k_2\circ k_1(\boldsymbol{\beta})\cap \partial E_1|$ or  $|\boldsymbol{\beta}\cap\partial E_1|<|k_3\circ k_1(\boldsymbol{\beta})\cap \partial E_1|$ since $k_1(\boldsymbol{\beta})$ is a minimal normal coordinate as well by the assumption and $(k_1)^2(\boldsymbol{\beta})=\boldsymbol{\beta}$. Moreover, if  $|\boldsymbol{\beta}\cap\partial E_1|<|k_i(\boldsymbol{\beta})\cap \partial E_1|$ for $i=2,3$ and $|\boldsymbol{\beta}\cap\partial E_1|<|k_j\circ k_1(\boldsymbol{\beta})\cap \partial E_1|$ for $j=2,3$ then $\boldsymbol{\beta}$ and  $k_1^1(\boldsymbol{\beta})$ are only two minimal normal coordinates of $T$ with respect to $\partial E_1$. 
\end{proof}

The following two theorems cover the rest of cases for the number of minimal normal coordinates of $T$. For an easier argument, consider the consecutive intersections between $\beta_3$ and $\partial E_1$ in $\partial E_1$. Recall  that \textbf{GR-RG}, \textbf{GRG}, \textbf{GB-BG}, and \textbf{GBG} types. Let $n_*$ be the number of $*$ type included among $ \boldsymbol{\beta}\cap \partial E_1$ in $\partial E_1$. \\

\begin{Thm}\label{T5}
Let $\boldsymbol{\beta}=(\beta_1,\beta_2,\beta_3)$ be a minimal normal coordinate of a rational 3-tangle $T$ with respect to $\partial E_1$ such that $\beta_i\cap \partial E_1\neq \emptyset$ for all $i=1,2,3$ and  $|\boldsymbol{\beta}\cap\partial E_1|=|k_j
(\boldsymbol{\beta})\cap \partial E_1|$ for $j=1,2$. Then $\boldsymbol{\beta}$ satisfies $n_{\textbf{GRG}}=n_{\textbf{GBG}}=0$ and $n_{\textbf{GR-RG}}=n_{\textbf{GB-BG}}$. Moreover, it  satisfies the following condition.
\begin{enumerate}
\item If $n_{\textbf{\textbf{GR-RG}}}=0$ then there are only two minimal normal coordinates of $T$ with respect to $\partial E_1$.

\item If $n_{\textbf{GR-RG}}\neq 0$, the number of minimal normal coordinates of $T$ with respect to $\partial E_1$ is ${1\over 2}\left(\xi_1+\xi_2\right)$, where $\xi_1$ and $\xi_1$ are the minimal  intersection numbers among the intersection numbers between the  pairs of consecutive green dots of $\textbf{GR-RG}$ type and  $\textbf{GB-BG}$ type  respectively.
\end{enumerate}

\end{Thm}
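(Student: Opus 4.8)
Everything is carried out on the window $\omega_1\subset\partial E_1$. Color each point of $\beta\cap\partial E_1$ blue, red, or green according as it lies on $\beta_1$, $\beta_2$, or $\beta_3$; by normality no two neighboring points share a color, so the cyclic list of points along $\omega_1$ is a proper $3$-coloring (and, by the symmetry of $E_1$, symmetric about the two distinguished points $a,b$, which every normal mapping fixes). Cutting this list at the green points produces the \emph{gaps}, the maximal runs of blue and red points; being properly $2$-colored a gap alternates, hence is determined by its length and its two end-colors, which agree exactly when the length is odd, so every gap has one of the types \textbf{GRG} or \textbf{GBG} (odd, length $1$), \textbf{GR-RG} or \textbf{GB-BG} (odd, length $\ge 3$), or even. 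By rules $(1)$--$(3)$, the normal mapping $k_i$ -- which for $i=1,2$ must be the standard move $k_i^1$, since $\beta_j\cap\partial E_1\ne\emptyset$ for all $j$ and Lemma~\ref{L1} forbids a second normal mapping of $\beta_i$ once $k_i^1$ fails to raise $|\cdot\cap\partial E_1|$ -- deletes each $c_i$-colored point with two unequal neighbors, keeps each one with two equal neighbors, and inserts one new $c_i$-colored point into every adjacency of the two colors other than $c_i$. Writing $A_c$ for the number of adjacencies of the two colors $\ne c$ and $D_c$ for the number of $c$-colored points with unequal neighbors, this gives $|k_i(\boldsymbol{\beta})\cap\partial E_1|-|\boldsymbol{\beta}\cap\partial E_1|=A_{c_i}-D_{c_i}$, so the hypotheses become $A_{\mathrm{blue}}=D_{\mathrm{blue}}$ and $A_{\mathrm{red}}=D_{\mathrm{red}}$.

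For the first assertion, count $A_{\mathrm{blue}}$, the number of red--green adjacencies, in two ways. By the red points: a red point with two green neighbors is the middle of a \textbf{GRG} and contributes $2$, one with a green and a blue neighbor contributes $1$, so $A_{\mathrm{blue}}=2\,n_{\textbf{GRG}}+D_{\mathrm{red}}$. By the green points: $A_{\mathrm{blue}}=2\,S^{RR}+D_{\mathrm{green}}$, where $S^{RR}$ is the number of green points with two red neighbors. Symmetrically $A_{\mathrm{red}}=2\,n_{\textbf{GBG}}+D_{\mathrm{blue}}=2\,S^{BB}+D_{\mathrm{green}}$. Substituting $A_{\mathrm{blue}}=D_{\mathrm{blue}}$, $A_{\mathrm{red}}=D_{\mathrm{red}}$ and adding forces $n_{\textbf{GRG}}+n_{\textbf{GBG}}=0$, hence both vanish; it also gives $D_{\mathrm{blue}}=D_{\mathrm{red}}$, hence $S^{RR}=S^{BB}$. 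Finally, the number of gaps whose first point is red equals the number of green points whose right neighbor is red, i.e. $n_{\textbf{GRG}}+n_{\textbf{GR-RG}}+(\#\text{ even }RB\text{-gaps})=S^{RR}+D^{BR}$, where $D^{BR}$ counts green points with blue left neighbor and red right neighbor; taking also the analogue with "first/right" replaced by "last/left", and the two analogues for blue, then adding and subtracting these four identities, the even-gap terms cancel, $D^{BR}+D^{RB}=D_{\mathrm{green}}$ cancels, and one is left with $n_{\textbf{GRG}}+n_{\textbf{GR-RG}}=n_{\textbf{GBG}}+n_{\textbf{GB-BG}}$. As $n_{\textbf{GRG}}=n_{\textbf{GBG}}=0$, this is $n_{\textbf{GR-RG}}=n_{\textbf{GB-BG}}$.

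For the counting statements, let $\Gamma$ be the graph whose vertices are the minimal normal coordinates of $T$ with respect to $\partial E_1$ -- by Theorem~\ref{T1} exactly those normal coordinates realizing the global minimum $|\boldsymbol{\beta}\cap\partial E_1|$ -- with an edge between two of them when they differ by a normal jump move. Since $n_{\textbf{GRG}}=n_{\textbf{GBG}}=0$ yields $A_{\mathrm{green}}>D_{\mathrm{green}}$, i.e. by Lemma~\ref{T3} the move $k_3$ strictly raises $|\cdot\cap\partial E_1|$, only $k_1$ and $k_2$ occur; since $(k_i^1)^2=\mathrm{id}$ and no $k_i^2$ exists, these alternate along any reduced word, and together with Lemma~\ref{L5} this makes $\Gamma$ a finite path through $\boldsymbol{\beta}$. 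Their action on gaps: $k_1$ sends a \textbf{GR-RG} of length $\ell$ to a \textbf{GB-BG} of length $\ell+2$, a \textbf{GB-BG} of length $\ell\ge3$ to a \textbf{GR-RG} of length $\ell-2$, fixes a \textbf{GBG}, and reverses each even gap; $k_2$ is its mirror image. If $n_{\textbf{GR-RG}}=0$, every gap is even, $k_1$ and $k_2$ only reverse even gaps, a second application returns to $\boldsymbol{\beta}$, so $\Gamma$ has exactly two vertices and $(1)$ holds. If $n_{\textbf{GR-RG}}\ne0$, walk from $\boldsymbol{\beta}$ by $k_1,k_2,k_1,\dots$ in one direction and $k_2,k_1,k_2,\dots$ in the other; on the first side the \textbf{GB-BG} gap lengths drop by $4$ every two steps while the \textbf{GR-RG} lengths rise, so the walk remains count-preserving for exactly $(\xi_2-1)/2$ steps -- it stops the moment the shortest such gap would be forced below length $1$ -- and for $(\xi_1-1)/2$ steps on the other side; since consecutive vertices differ by a genuine normal jump move and (Lemma~\ref{L5}) the walk cannot close up, all $(\xi_1-1)/2+(\xi_2-1)/2+1=\tfrac12(\xi_1+\xi_2)$ vertices are distinct, giving $(2)$.

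The first assertion is routine once the colored model is set up; the real difficulty is the last paragraph. The delicate points are: determining exactly when $k_i^1$ changes rather than preserves the intersection pattern on $\partial E_1$; verifying that the alternating walk terminates after precisely $(\xi_i-1)/2$ steps in each direction, in particular treating the boundary behavior when a gap has shrunk to a \textbf{GRG} or \textbf{GBG} and no longer changes length, and checking that the longer gaps never force an earlier stop; and proving that distinct vertices of the walk are distinct arc systems even when several of them induce the same pattern on $\partial E_1$, for which one combines Lemma~\ref{L5}, Theorem~\ref{T1}, and a monotonicity argument on the evolving gap data. The precise role of the reflection symmetry about $a,b$ -- which is what makes the answers come out as $\tfrac12(\xi_1+\xi_2)$ and $2$ -- also has to be made precise.
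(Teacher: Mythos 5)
Your overall route is the paper's: you set up the same colored model on $\omega_1$, use the same rules for how $k_i^1$ edits the dot pattern, and derive the first assertion from the hypothesis that $k_1,k_2$ preserve $|\cdot\cap\partial E_1|$. Your double count of red--green and blue--green adjacencies is just a reorganization of the paper's balance equations $n_{\textbf{GR-RG}}+n_{\textbf{GRG}}=n_{\textbf{GB-BG}}$ and $n_{\textbf{GR-RG}}=n_{\textbf{GB-BG}}+n_{\textbf{GBG}}$, and it is correct. The counting of minimal normal coordinates is also the paper's argument: alternate $k_1$ and $k_2$, track the odd gap lengths changing by $\pm 2$ per step, and stop when a gap degenerates, giving $1+\tfrac{\xi_1-1}{2}+\tfrac{\xi_2-1}{2}$.

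Two points in the counting half are genuine gaps rather than deferred routine checks. First, your claim that ``only $k_1$ and $k_2$ occur'' along the walk is justified only at $\boldsymbol{\beta}$ itself (via $n_{\textbf{GRG}}=n_{\textbf{GBG}}=0$ and Lemma~\ref{T3}). At an interior vertex of the walk Lemma~\ref{T3} still forces $k_3$ to be strict because $k_1$ and $k_2$ both preserve there, but at the two \emph{endpoints} of the walk only one of $k_1,k_2$ preserves, so Lemma~\ref{T3} no longer rules out $k_3$ preserving minimality and branching the path. The paper devotes a separate argument to exactly this (``the total number of intersections between consecutive green dots \dots for the last minimal normal coordinate and the previous one are the same, \dots the number of intersections would increase if we apply $k_3$''); you need some version of it. Second, in case $(1)$ your own model is inconsistent with your conclusion: since $k_1(\boldsymbol{\beta})$ and $k_2(\boldsymbol{\beta})$ replace different bridge arcs they are distinct normal coordinates, both minimal by hypothesis, so your path $\Gamma$ contains at least the three vertices $k_2(\boldsymbol{\beta}),\boldsymbol{\beta},k_1(\boldsymbol{\beta})$; ``$(k_i^1)^2=\mathrm{id}$'' only bounds each side of the walk by one step, it does not reduce the count to two. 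Moreover, if every gap is even then by your own gap rules each further alternation of $k_1,k_2$ again preserves the count, so you also have not shown the walk terminates at all in that case. Whatever mechanism makes the answer two (e.g.\ the moves preserving the entire intersection pattern rather than merely its cardinality, as the paper's remark about $k_1\circ k_2$ and $k_2\circ k_1$ suggests) has to be identified; your argument as written proves at least three. The remaining items you list as ``to be made precise'' (distinctness of the walk's vertices, exact termination step) are also doing real work and are not optional.
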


\begin{proof}
\begin{figure}[htb]
\includegraphics[scale=.9]{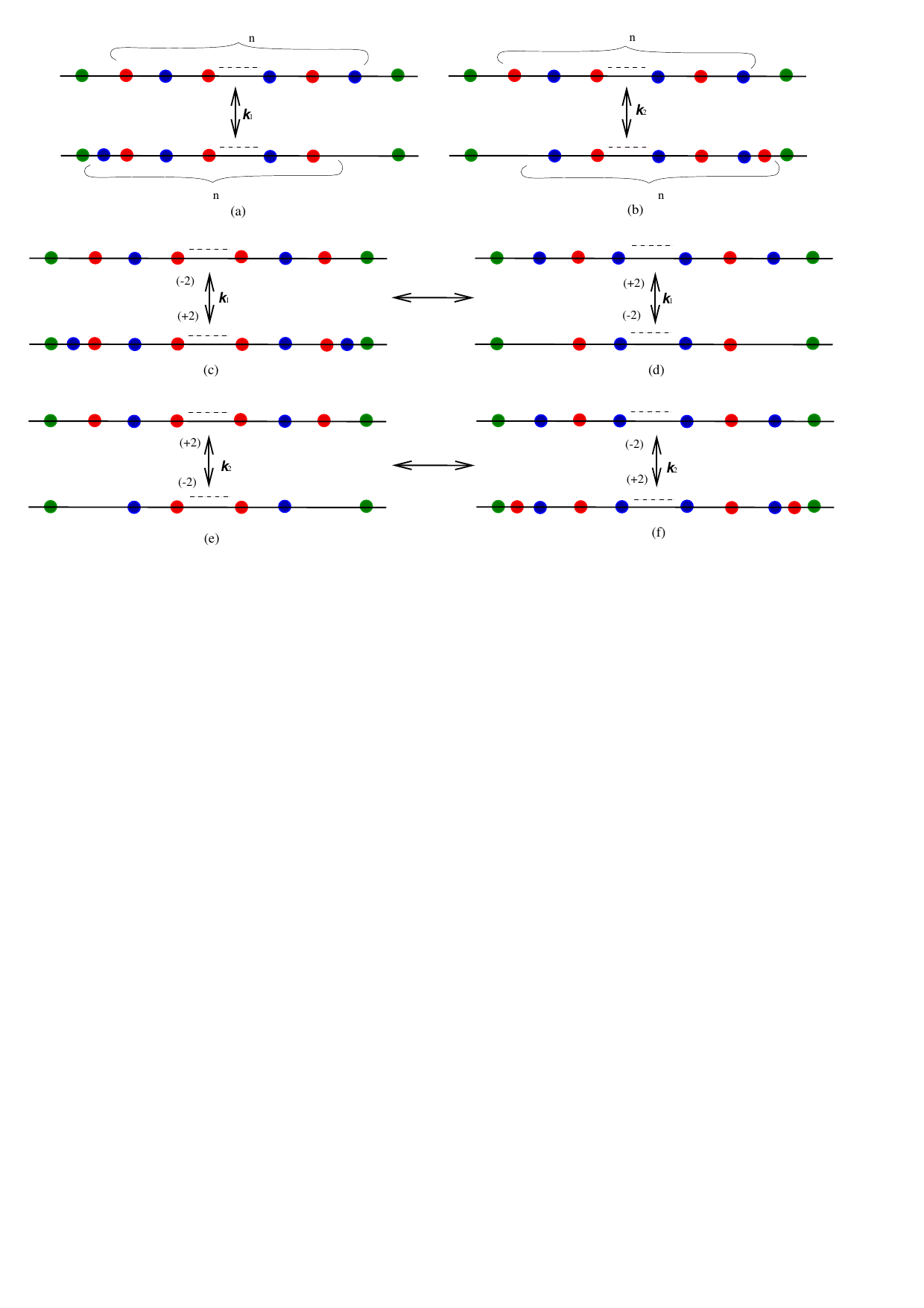}
\vskip -440pt
\caption{}\label{c28}
\end{figure}

Since $\beta_i\cap \partial E_1\neq \emptyset$ for all $i=1,2,3$, there exists the standard normal jump move $k_{i}^1$ for all $i\in\{1,2,3\}$. However, there is no $k_{i}^2$  by Lemma~\ref{L1}. So, we let $k_i=k_i^1$.
We note that if a pairs of consecutive intersections between $\beta_3$ and $\partial E_1$ in $\partial E_1$ contains  even number of intersections then the normal jump move by $k_1$ or $k_2$ would preserve the number of intersections between them as in the diagrams $(a)$ and $(b)$ of Figure~\ref{c28}.  This implies that $n_{\textbf{GR-RG}}+n_{\textbf{GRG}}=n_{\textbf{GB-BG}}$ and  $n_{\textbf{GR-RG}}=n_{\textbf{GB-BG}}+n_{\textbf{GBG}}$  by applying $k_1$ and $k_2$ respectively since $|\boldsymbol{\beta}\cap\partial E_1|=|k_i(\boldsymbol{\beta})\cap \partial E_1|$ for $i=1,2$. (Refer to Figure~\ref{c28} to see the increase or the decrease of intersections of each case.) Therefore, $n_{\textbf{GRG}}=n_{\textbf{GBG}}=0$ and $n_{\textbf{GR-RG}}=n_{\textbf{GB-BG}}$. \\

We  first assume that $n_{\textbf{GR-RG}}=0$. We recall that $(k_1)^2(\boldsymbol{\beta})=(k_2)^2(\boldsymbol{\beta})=\boldsymbol{\beta}$. Moreover, we note that the color patterns between the pair of consecutive intersections between $\beta_3$ and $\partial E_1$ in $\partial E_1$ containing even number of intersections  are the same when we apply  $k_1\circ k_2$ or $k_2\circ k_1$.  We also note that we cannot have a minimal normal coordinate when we apply $k_3$ by Lemma~\ref{T3}. So, we cannot have any other minimal coordinate of $T$. This implies that there are only two minimal normal coordinates of $T$ with respect to $\partial E_1$. \\

 Now, we assume that $n_{\textbf{GR-RG}}\neq 0$.  We  apply $k_1$ and $k_2$ alternatively to find and count the number of minimal normal coordinates of $T$ with respect to $\partial E_1$ until the new obtained minimal normal coordinate of $T$ still satisfies the assumption of this theorem since $ k_i^2(\boldsymbol{\beta})\cap \partial E_1=\boldsymbol{\beta}\cap \partial E_1$ and   $|\boldsymbol{\beta}\cap \partial E_1| < |k_3(\boldsymbol{\beta})\cap \partial E_1|$ and Theorem~\ref{T1}.
Let $\xi_1$ be the minimal  intersection numbers among the intersection numbers between the  pairs of the green intersections of \textbf{GR-RG} type.
Also, let $\xi_2$ be the minimal  intersection numbers among the interection numbers between the  pairs of the green intersections of \textbf{GB-BG} type.\\

 We note that if we apply $k_1$ first, then the intersection numbers between the pairs of \textbf{GR-RG} type are increasing by $2\times n$, where $n$ is the number of $k_1$ and $k_2$ we apply. On the other hand, the intersection numbers between the pairs of  \textbf{GB-BG} type are decreasing by $2\times n$ if $2\times n<\xi_1$. 
Similarly, we we apply $k_2$ first  the intersection numbers between the pairs  of  \textbf{GB-BG} type are increasing by $2\times n$  if $2\times n<\xi_2$ and the intersection numbers between the pairs of  \textbf{GR-RG} type are decreasing by $2\times n$ if $2\times n<\xi_2$.  We note that $\xi_1$ and $\xi_2$ are odd number. Therefore,  $1+{\xi_1-1\over 2}+ {\xi_2-1\over 2}={\xi_1+\xi_2\over 2}$ is the number of minimal coordinates of $T$ with respect to $\partial E_1$. We need to consider the case that the last minimal coordinate of $T$ (when we follow this process) is preserved the normality when we apply $k_3$. However, this case cannot happen. We note that the both sides of each green dot would be changed at the same time when we apply $k_1$ or $k_2$ since there is no \textbf{GBG} and \textbf{GRG} types until we have the last minimal coordinate of $T$. Moreover, the total number of intersections between consecutive green dots of \textbf{GB-BG},\textbf{GR-RG},\textbf{GBG} and \textbf{GRG}  types for the last minimal normal coordinate of $T$ and the previous one are the same. This implies that the number of intersections between the new coordinate and $\partial E_1$ would increase if we apply $k_3$ to the last one. 
\end{proof}

\begin{Thm}\label{T5-1}
Let $\boldsymbol{\beta}=(\beta_1,\beta_2,\beta_3)$ be a minimal normal coordinate of a rational $3$-tangle $T$ with respect to $\partial E_1$ such that $\beta_i\cap \partial E_1\neq \emptyset$ for all $i=1,2,3$ and  $|\boldsymbol{\beta}\cap\partial E_1|=|k_1
(\boldsymbol{\beta})\cap \partial E_1|$  and  $|\boldsymbol{\beta}\cap\partial E_1|<|k_j
(\boldsymbol{\beta})\cap \partial E_1|$ for $j=2,3$. Then $\boldsymbol{\beta}$ has a $\textbf{GRG}$ type in $\partial E_1$ and the number of minimal normal coordinates of $T$ with respect to $\partial E_1$ is ${1\over 2}(\xi_1+1)$, where $\xi_1$ is the minimal intersection number among the intersection numbers between the pairs of consecutive green dots of $\textbf{GB-BG}$ type.
\end{Thm}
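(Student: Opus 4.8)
The plan is to run the bookkeeping of the proof of Theorem~\ref{T5}, adjusted for the fact that here $k_2$ and $k_3$ strictly raise $|\,\cdot\cap\partial E_1|$ while $k_1$ leaves it unchanged. Since $\beta_i\cap\partial E_1\neq\emptyset$ for every $i$, all three standard normal jump moves $k_1^1,k_2^1,k_3^1$ exist, and by Lemma~\ref{L1} there is no $k_i^2$; put $k_i:=k_i^1$, and recall (as in the proof of Lemma~\ref{T3}) that each $k_i$ changes the intersection pattern in $E_1$, so its effect on $\omega_1$ is dictated by the three rules for normal jump moves. Reading those rules off the local pictures of Figure~\ref{c28}: a pair of consecutive green dots enclosing an even number of dots is fixed by both $k_1$ and $k_2$, while among pairs enclosing an odd number of dots, $k_1$ lengthens each $\textbf{GRG}$ and each $\textbf{GR-RG}$ pair by $2$, shortens each $\textbf{GB-BG}$ pair by $2$ and fixes each $\textbf{GBG}$ pair, whereas $k_2$ shortens each $\textbf{GR-RG}$ pair by $2$, lengthens each $\textbf{GBG}$ and each $\textbf{GB-BG}$ pair by $2$ and fixes each $\textbf{GRG}$ pair. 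Hence
\[
|k_1(\boldsymbol{\beta})\cap\partial E_1|-|\boldsymbol{\beta}\cap\partial E_1|=2\bigl(n_{\textbf{GRG}}+n_{\textbf{GR-RG}}-n_{\textbf{GB-BG}}\bigr),
\]
\[
|k_2(\boldsymbol{\beta})\cap\partial E_1|-|\boldsymbol{\beta}\cap\partial E_1|=2\bigl(n_{\textbf{GBG}}+n_{\textbf{GB-BG}}-n_{\textbf{GR-RG}}\bigr).
\]
The hypothesis $|\boldsymbol{\beta}\cap\partial E_1|=|k_1(\boldsymbol{\beta})\cap\partial E_1|$ gives $n_{\textbf{GRG}}+n_{\textbf{GR-RG}}=n_{\textbf{GB-BG}}$, and $|\boldsymbol{\beta}\cap\partial E_1|<|k_2(\boldsymbol{\beta})\cap\partial E_1|$ gives $n_{\textbf{GBG}}+n_{\textbf{GB-BG}}>n_{\textbf{GR-RG}}$; subtracting yields $n_{\textbf{GBG}}+n_{\textbf{GRG}}>0$, so $\boldsymbol{\beta}$ has a $\textbf{GBG}$ pair or a $\textbf{GRG}$ pair.

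Upgrading this to $n_{\textbf{GRG}}\geq 1$ --- that is, excluding $n_{\textbf{GRG}}=0,\ n_{\textbf{GBG}}\geq 1$ --- is the heart of the matter; the displayed balance equations alone do not rule it out, so minimality of $\boldsymbol{\beta}$ must be used. I would argue by contradiction: assuming $n_{\textbf{GRG}}=0$ one has $n_{\textbf{GR-RG}}=n_{\textbf{GB-BG}}$, and $k_1(\boldsymbol{\beta})$ is again a minimal normal coordinate with all three $\beta_i$ still meeting $\partial E_1$; recomputing its type counts from the rules above and combining the green-pair balance equations with the red-pair balance equations coming from $k_1$ and $k_3$ and the blue-pair balance equations coming from $k_2$ and $k_3$, together with the left/right symmetry of the two subarcs of $\partial E_1$ cut off by the two distinguished intersections that are the endpoints of the arcs of $E_1$ running directly to the punctures (as in Figure~\ref{c8} and the proof of Lemma~\ref{T3}), one shows that a $\textbf{GBG}$ pair with no accompanying $\textbf{GRG}$ pair cannot coexist with all of $|\boldsymbol{\beta}\cap\partial E_1|=|k_1(\boldsymbol{\beta})\cap\partial E_1|$, $|\boldsymbol{\beta}\cap\partial E_1|<|k_2(\boldsymbol{\beta})\cap\partial E_1|$ and $|\boldsymbol{\beta}\cap\partial E_1|<|k_3(\boldsymbol{\beta})\cap\partial E_1|$. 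The detailed case analysis of the colour patterns between consecutive green dots, in the spirit of the proof of Lemma~\ref{T3}, is where essentially all the work is.

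Once $n_{\textbf{GRG}}\geq 1$ is known, the enumeration of minimal normal coordinates proceeds exactly as in the case $n_{\textbf{GR-RG}}\neq 0$ of Theorem~\ref{T5}, but with only one available direction of motion. By Theorem~\ref{T1} every minimal normal coordinate of $T$ with respect to $\partial E_1$ is reached from $\boldsymbol{\beta}$ by normal mappings preserving $|\,\cdot\cap\partial E_1|$; since $k_2$ and $k_3$ strictly increase it, the first move is forced to be $k_1$, producing $\boldsymbol{\beta}^{(1)}=k_1(\boldsymbol{\beta})$, and thereafter (as in Theorem~\ref{T5}) the preserving moves alternate between $k_1$ and $k_2$, using $(k_1)^2=(k_2)^2=\mathrm{id}$ and Lemma~\ref{T3} to see that no other preserving move arises. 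Because a $\textbf{GRG}$ pair is fixed by $k_2$, it behaves like a ``$\textbf{GR-RG}$ pair of length $1$'' and blocks the walk on the $k_2$-initiated side immediately; so, exactly as in that case of Theorem~\ref{T5} but with the role of the second minimal length played by $1$, the walk runs $\tfrac{\xi_1-1}{2}$ steps (each shortening the shortest $\textbf{GB-BG}$ pair by $2$) before terminating, and one checks --- as at the end of the proof of Theorem~\ref{T5} --- that the terminal coordinate genuinely increases under $k_3$. Counting $\boldsymbol{\beta}$ itself, this gives $1+\tfrac{\xi_1-1}{2}=\tfrac12(\xi_1+1)$ minimal normal coordinates of $T$ with respect to $\partial E_1$.
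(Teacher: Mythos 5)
Your proposal follows the paper's own route: the same Figure~\ref{c28} bookkeeping for pairs of consecutive green dots, the same two balance relations, and the same alternating $k_1,k_2$ walk that shortens the shortest $\textbf{GB-BG}$ pair by $2$ at each step until a length-one pair appears, giving $1+\tfrac{\xi_1-1}{2}=\tfrac12(\xi_1+1)$. The enumeration half of your argument is essentially identical to the paper's and is fine at the paper's level of rigor.

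The genuine gap is exactly where you flag it yourself: excluding the case $n_{\textbf{GRG}}=0$, $n_{\textbf{GBG}}\geq 1$. Your balance relations only yield $n_{\textbf{GRG}}+n_{\textbf{GBG}}>0$, and you defer the exclusion of a lone $\textbf{GBG}$ to an unexecuted ``detailed case analysis,'' so the first assertion of the theorem (that $\boldsymbol{\beta}$ has a $\textbf{GRG}$ type) is not actually proved in your write-up. For comparison, the paper closes this step differently: it writes the $k_1$-balance as $n_{\textbf{GR-RG}}+n_{\textbf{GRG}}=n_{\textbf{GB-BG}}+n_{\textbf{GBG}}$ and then asserts $n_{\textbf{GBG}}=0$ directly from $|\boldsymbol{\beta}\cap\partial E_1|=|k_1(\boldsymbol{\beta})\cap\partial E_1|$; with $n_{\textbf{GBG}}=0$ that equation plus the strict $k_2$-inequality forces $n_{\textbf{GRG}}>0$ immediately. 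Be aware, though, that this version of the $k_1$-balance disagrees with the one used in Theorem~\ref{T5} (which has no $n_{\textbf{GBG}}$ term on the right), and under the stated rules for normal jump moves a $\textbf{GBG}$ segment is fixed by $k_1$ (its blue dot has two green neighbours), so your form of the balance is the one consistent with Theorem~\ref{T5}. In short: you have located the real pressure point of the proof, but you have not discharged it, and the paper's one-sentence disposal of $\textbf{GBG}$ is not something you can import without supplying the missing justification.
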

\begin{proof}
 Recall that $k_i=k_i^1$ by the argument in the previous theorem. We first claim that $\boldsymbol{\beta}$  has a $\textbf{GRG}$  type in $\partial E_1$. If there is no $\textbf{GBG}$, $\textbf{GRG}$ , $\textbf{GB-BG}$  or $\textbf{GR-RG}$ types, then we note that  $|\boldsymbol{\beta}\cap\partial E_1|=|k_2
(\boldsymbol{\beta})\cap \partial E_1|$. This violates the assumption. Since  $|\boldsymbol{\beta}\cap\partial E_1|=|k_1
(\boldsymbol{\beta})\cap \partial E_1|$,  we have $n_{\textbf{GR-RG}}+n_{\textbf{GRG}}=n_{\textbf{GB-BG}}+n_{\textbf{GBG}}$. If there is no $\textbf{GBG}$ or  $\textbf{GRG}$ type then   $|\boldsymbol{\beta}\cap\partial E_1|=|k_2
(\boldsymbol{\beta})\cap \partial E_1|$. This violates the assumption of this theorem. We note that $\boldsymbol{\beta}$ canont have $\textbf{GBG}$ type since  $|\boldsymbol{\beta}\cap\partial E_1|=|k_1
(\boldsymbol{\beta})\cap \partial E_1|$. Therefore, $\boldsymbol{\beta}$  should have $\textbf{GRG}$ type. Moreover, if we continue to appy $k_1$ and $k_2$ alternatively to $\boldsymbol{\beta}$  until the modified coordinate has a $\textbf{GBG}$ type, we still  have a minimal normal coordinate of $T$ with respect to $\partial E_1$. We note that the intersection number between a pair of consecutive green dots of $\textbf{GB-BG}$ type during each process above is decreased by $2$. Therefore, the number of minimal coordinates of $T$ with respect to $\partial E_1$ is ${1\over 2}(\xi_1+1)$. This completes the proof of the theorem.

\end{proof}
\begin{Lem}\label{T6}
Suppose that $\boldsymbol{\beta}=(\beta_1,\beta_2,\beta_3)$ is a normal coordinate of a rational $3$-tangle $T$ such that $\beta_3\cap \partial E_1=\emptyset$ and $\beta_i\cap \partial E_1\neq \emptyset$ if $i=1,2$. Let $(p_1, q_1, p_2, q_2, p_3, q_3)$ be the Dehn's parametrization of $\beta(=\beta_1\cup\beta_2\cup\beta_3)$. Then the normal mapping $k_1$ or $k_2$ to $\boldsymbol{\beta}$ preserves the number of intersections between $\beta_i$ and $\partial E_1$ for $i=1,2$. Moreover, $k_1$ and $k_2$ change the connecting pattern $q_1$ in $E_1$ into $q_1+1$ and $q_1-1$ respectively.
\end{Lem}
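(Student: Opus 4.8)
The plan is to reduce the statement to a purely local picture in and around $E_1$. First I would record what the hypothesis forces. Since $\beta_3\cap\partial E_1=\emptyset$, neither endpoint of $\beta_3$ lies in $E_1$, so by the labelling convention of Section~\ref{s5} the arcs $\beta_1$ and $\beta_2$ each have exactly one endpoint in $E_1$, namely $p_1$ and $p_2$ respectively; hence each of $\beta_1,\beta_2$ meets $\partial E_1$ in an odd number of points and $\beta_3$ is disjoint from $E_1$ altogether. Thus in the window $\omega_1$ the intersection points of $\beta$ are coloured only blue (from $\beta_1$) and red (from $\beta_2$) and, by normality, they alternate; inside $E_1$ we see the $p_1$-arc of $\beta_1$, the $p_2$-arc of $\beta_2$, and some return arcs, and by Theorem~\ref{T-1} (the numbers $p_1,p_2,p_3$ being fixed) this whole configuration is determined up to isotopy by the single twisting datum $q_1$.

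Next I would identify $k_1$ and $k_2$. Because $\beta_1\cap\omega_1\neq\emptyset$ and $\beta_2\cap\omega_1\neq\emptyset$, the standard normal jump moves of $\beta_1$ and of $\beta_2$ both exist. I would argue, via the outlet analysis underlying Theorem~\ref{T-2} in \cite{3}, that in this configuration the only normal jump move of $\beta_1$ is a jump over $\beta_2$ and that its band $R$ can be isotoped to lie in the union of $E_1$ with a thin collar of $\partial E_1$ taken in $\Sigma_{0,6}\setminus(E_2\cup E_3)$; this is exactly where the hypothesis $\beta_3\cap\partial E_1=\emptyset$ enters, since it lets one push $R$ off $\beta_3$, $E_2$ and $E_3$. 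In particular there is no $k_i^2$ for $i=1,2$: if there were, Lemma~\ref{L1} would force both $k_i^1(\boldsymbol\beta)$ and $k_i^2(\boldsymbol\beta)$ to meet $\partial E_1$ in strictly more points than $\boldsymbol\beta$, contradicting the count in the next step. Symmetrically $k_2$ is a jump of $\beta_2$ over $\beta_1$ supported in the same collar-plus-$E_1$ region; any normal jump move of $\beta_3$, if it exists, is supported near $E_2$ or $E_3$ and is irrelevant here.

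With the moves localized the rest is inspection. For $k_1$ the arcs $\beta_2$ and $\beta_3$ are left untouched, so $|\beta_2\cap\partial E_1|$ is trivially preserved and one only has to watch $\beta_1$. Tracking the $p_1$-arc of $\beta_1$ through the collar and through $E_1'$, the move $k_1$ replaces it by an arc that agrees with it outside a neighbourhood of $\partial E_1\cup E_1'$ and inside runs once more around $E_1'$, past the adjacent $\beta_2$-strand; isotoping the result back into a window, the extra portion is pushed parallel to $\partial E_1$ and so creates only cancelling pairs of intersection points, whence $|\beta_1\cap\partial E_1|$ is unchanged, while the leftmost intersection of $\beta$ now connects to the next point $b_{i+1}$ of the barycentric disk, i.e.\ the twisting datum goes from $q_1$ to $q_1+1$ (with $p_1,p_2,q_2,p_3,q_3$ all unchanged, since nothing outside the collar-plus-$E_1$ region is touched). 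Running the identical inspection for $k_2$ — now the red strand is carried over the blue one, around $E_1'$ the opposite way — gives $q_1\mapsto q_1-1$ and preserves $|\beta_1\cap\partial E_1|$ and $|\beta_2\cap\partial E_1|$ for the same reason.

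I expect the real work to be in the second step: verifying rigorously that the \emph{standard} normal jump move of \cite{3}, applied to this degenerate configuration, really is the local twist described above and produces exactly the increment $\pm1$ rather than a larger shift or a move that changes some $p_i$. This forces one to re-open the definitions of the standard normal jump move and of the Dehn parametrization of $E_1$, and to bookkeep the window $\omega_1$ and the auxiliary points $b_1,\dots,b_{2n+2}$ carefully. By contrast the geometric assertion that a jump move confined to a collar of $\partial E_1$ cannot change $|\beta\cap\partial E_1|$ is a routine innermost-disc and minimal-position argument once the localization is in hand.
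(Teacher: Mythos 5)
Your proposal follows essentially the same route as the paper's proof: both arguments come down to observing that, because $\beta_3$ misses $\partial E_1$, the normal jump moves $k_1$ and $k_2$ are the unique standard ones and are realized as local twists of the puncture-arcs around the barycentric disk $E_1'$, which visibly preserve the intersection count with $\partial E_1$ and shift the connecting datum $q_1$ by $+1$ and $-1$ respectively (the paper carries out this inspection via Figure~\ref{c37}, while you supply the localization of the band and the exclusion of $k_i^2$ more explicitly). One small correction to your setup: the hypotheses do not force $\beta_1$ and $\beta_2$ to have one endpoint each in $E_1$ --- the labelling convention also permits $\beta_1$ to join $p_1$ to $p_2$, in which case $\beta_2$ contains $p_3$ and meets $\partial E_1$ an even number of times --- but this only changes which bridge arc owns the two puncture-arcs inside $E_1$ and does not affect your twist computation.
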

\begin{proof}
\begin{figure}[htb]
\includegraphics[scale=.80]{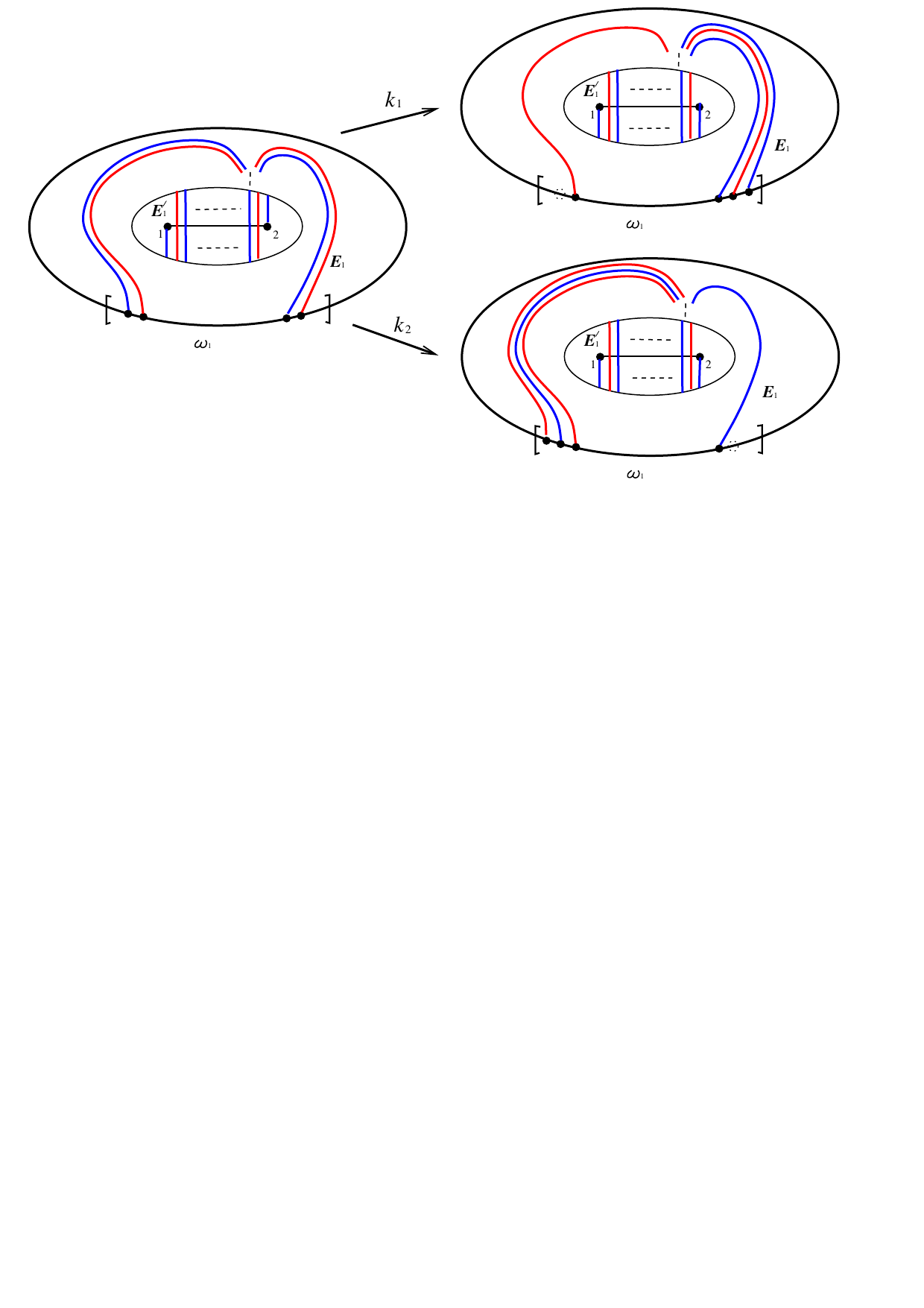}
\vskip -420pt
\caption{Connecting pattern $q_1$ changes by $k_1$ and $k_2$}\label{c37}
\end{figure}

Applying the normal mapping $k_1$ to $\boldsymbol{\beta}$ changes $\beta_1$ to $\beta_1'$ so that $(\beta_1',\beta_2,\beta_3)$ is a normal coordinate of $T$. We note that there exists the standard normal jump move $k_1^1$. We note that $\beta_1'$ is unique by $k_1^1$ since  $(\beta_1,\beta_2,\beta_3)$ is a minimal coordinate of $T$.  In other words, there is no $\beta_1''$ so that $\beta_1''$ is not isotopic to $\beta_1'$ or $\beta_1$ and  $(\beta_1'',\beta_2,\beta_3)$ is a normal form. Since $k_1$ preserves $\beta_2$, there is only way to have $\beta_1'$ as in the right upper diagram of Figure~\ref{c37}. We note that $k_1$ preserves $p_1$ but it increases $q_1$ by $1$. Similarly, the normal mapping $k_2$ changes $\beta_2$ to $\beta_2'$ and $\beta_2'$ is unique. Moreover, $k_2$ preserves $p_1$ but it decreases $q_1$ by $1$ as in the right lower diagram of Figure~\ref{c37}. This completes the proof.
\end{proof}

\begin{Lem}\label{T6-1}
Suppose that $\boldsymbol{\beta}=(\beta_1,\beta_2,\beta_3)$ is a normal coordinate of a rational $3$-tangle $T$ such that $\boldsymbol{\beta}\cap \partial E_1=\emptyset$. i.e., $\beta_1\subset E_1$. Let $(0,0,p_2, q_2, p_3, q_3)$ be the Dehn's parametrization of $\beta$. Then $p_2=p_3$ and $q_2+q_3=c$, where $c$ is a fixed integer depending on $T$.
\end{Lem}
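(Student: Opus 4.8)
The plan is to push everything into the complement of $E_1$. Since $\beta_1\subset E_1$ joins the two punctures of $E_1$ and none of the three arcs meets $\partial E_1$, the union $\beta:=\beta_2\cup\beta_3$ lies in the four-punctured disk $D:=\Sigma_{0,6}\setminus\operatorname{int} E_1$ (with boundary $\partial E_1$ and the four punctures carried by $E_2\cup E_3$), and $P:=D\setminus(\operatorname{int} E_2\cup\operatorname{int} E_3)$ is a pair of pants with boundary circles $\partial E_1,\partial E_2,\partial E_3$. First I would record the combinatorics of $\beta$ relative to this decomposition: inside $E_2$ (resp.\ $E_3$) it is the standard family of arcs described by $(p_2,q_2)$ (resp.\ $(p_3,q_3)$), while $\beta\cap P$ is a disjoint family of essential arcs of $P$, and since $\beta$ misses $\partial E_1$ each of them has both endpoints on $\partial E_2\cup\partial E_3$; writing $a,b,d$ for the numbers of such arcs of types $(\partial E_2,\partial E_2)$, $(\partial E_2,\partial E_3)$, $(\partial E_3,\partial E_3)$ respectively, we get $p_2=2a+b$ and $p_3=2d+b$, so $p_2=p_3\iff a=d$. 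Now an essential $(\partial E_2,\partial E_2)$-arc and an essential $(\partial E_3,\partial E_3)$-arc of $P$ must cross (the first separates $\partial E_1$ from $\partial E_3$ in $P$) while all arcs of $\beta\cap P$ are pairwise disjoint, so $a=0$ or $d=0$; say $d=0$. If $a\ge 1$, the $a$ parallel $(\partial E_2,\partial E_2)$-arcs have $2a$ nested endpoints on $\partial E_2$ and the sub-regions of $P$ they cut off consecutively are disjoint from $\beta$; inspecting the innermost such arc $s$ one finds that the intersection of $\beta$ with $\partial E_2$ sitting next to an endpoint of $s$ along the window $\omega_2$ again belongs to a $(\partial E_2,\partial E_2)$-arc (nothing coming out of $E_3$ can reach there without crossing $s$), so two consecutive intersections on $\omega_2$ lie on the same bridge arc, contradicting normality. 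Hence $a=0$, symmetrically $d=0$, and $p_2=p_3$.

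For the second assertion, put $p:=p_2=p_3$. With $p$ fixed, normality rigidifies the combinatorial type of $\beta$ in $P$ (only $(\partial E_2,\partial E_3)$-arcs, $b=p$ of them) and the patterns inside $E_2,E_3$, so by Theorem~\ref{T-1} the isotopy class of $\{\beta_2,\beta_3\}$ in $D$ is then recorded by $(q_2,q_3)$. I claim $q_2+q_3$ depends on $T$ alone. Any two normal coordinates of $T$ with $\boldsymbol{\beta}\cap\partial E_1=\varnothing$ are joined by normal jump moves (Theorem~\ref{T0}); since $\beta_1\cap\omega=\varnothing$ no $k_1$ ever occurs, and — keeping the coordinate in this class, equivalently realising each move inside $D$ so that $\{\beta_2,\beta_3\}$ is replaced by another bridge system of the same rational $2$-tangle relative to $\beta_1$ — the moves that arise are jumps of $\beta_2$ over $\beta_3$ and of $\beta_3$ over $\beta_2$. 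By a local analysis exactly like the one in Lemma~\ref{T6} (Figure~\ref{c37}), each such move leaves $q_2+q_3$ unchanged, sending $(q_2,q_3)$ to $(q_2+1,q_3-1)$ or $(q_2-1,q_3+1)$. Therefore $q_2+q_3$ is constant along the whole chain and equals a fixed integer $c=c(T)$, which completes the proof.

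The pair-of-pants half ($p_2=p_3$) is short; the real content is in the second half, and I expect the main obstacle to be twofold: (i) showing that the chain of normal jump moves connecting two coordinates with $\boldsymbol{\beta}\cap\partial E_1=\varnothing$ can be taken to remain in this class — equivalently, that normal bridge systems of a rational $2$-tangle are connected through the $\beta_2$-over-$\beta_3$ and $\beta_3$-over-$\beta_2$ moves, the two-strand analogue of Theorem~\ref{T0}; and (ii) the bookkeeping that each such move changes $(q_2,q_3)$ by $(\pm 1,\mp 1)$ while keeping $p_2=p_3$. Both follow the pattern of arguments already established in the paper, but they are where the work lies.
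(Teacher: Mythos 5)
Your proof follows the same two-step route as the paper: first equate $p_2$ and $p_3$ by analyzing how $\beta_2\cup\beta_3$ sits in the complement of the $E_i$'s, then observe that the normal jump moves available when $\beta_1\subset E_1$ trade $(q_2,q_3)$ by $(\pm1,\mp1)$, so their sum is invariant. On the first step you are in fact more careful than the paper: the paper deduces $|(\beta_2\cup\beta_3)\cap\omega_2|=|(\beta_2\cup\beta_3)\cap\omega_3|$ directly from the location of the four endpoints, which implicitly assumes there are no returning arcs of type $(\partial E_2,\partial E_2)$ or $(\partial E_3,\partial E_3)$ in the pair of pants; your disjointness observation plus the innermost-arc/normality argument is precisely the justification that is missing there (only take care that the two endpoints of the extremal returning arc are adjacent along $\partial E_2$ on the side facing $\partial E_1$, and hence in the window, rather than for the ``innermost'' arc on the $\partial E_3$ side, whose endpoints may be separated by endpoints of $(\partial E_2,\partial E_3)$-arcs). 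On the second step you and the paper do the same thing, namely invoke the local computation of Lemma~\ref{T6} (Figures~\ref{c37} and~\ref{c36}), and the obstacle you flag as (i) --- that any two normal coordinates with $\boldsymbol{\beta}\cap\partial E_1=\emptyset$ are connected by jump moves that stay in that class --- is exactly the point the paper also leaves unargued: it simply asserts that one ``can keep having'' such coordinates by alternating the two moves. So your proposal is correct modulo the same connectivity gap as the published proof, and it has the merit of naming that gap explicitly.
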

\begin{figure}[htb]
\includegraphics[scale=.78]{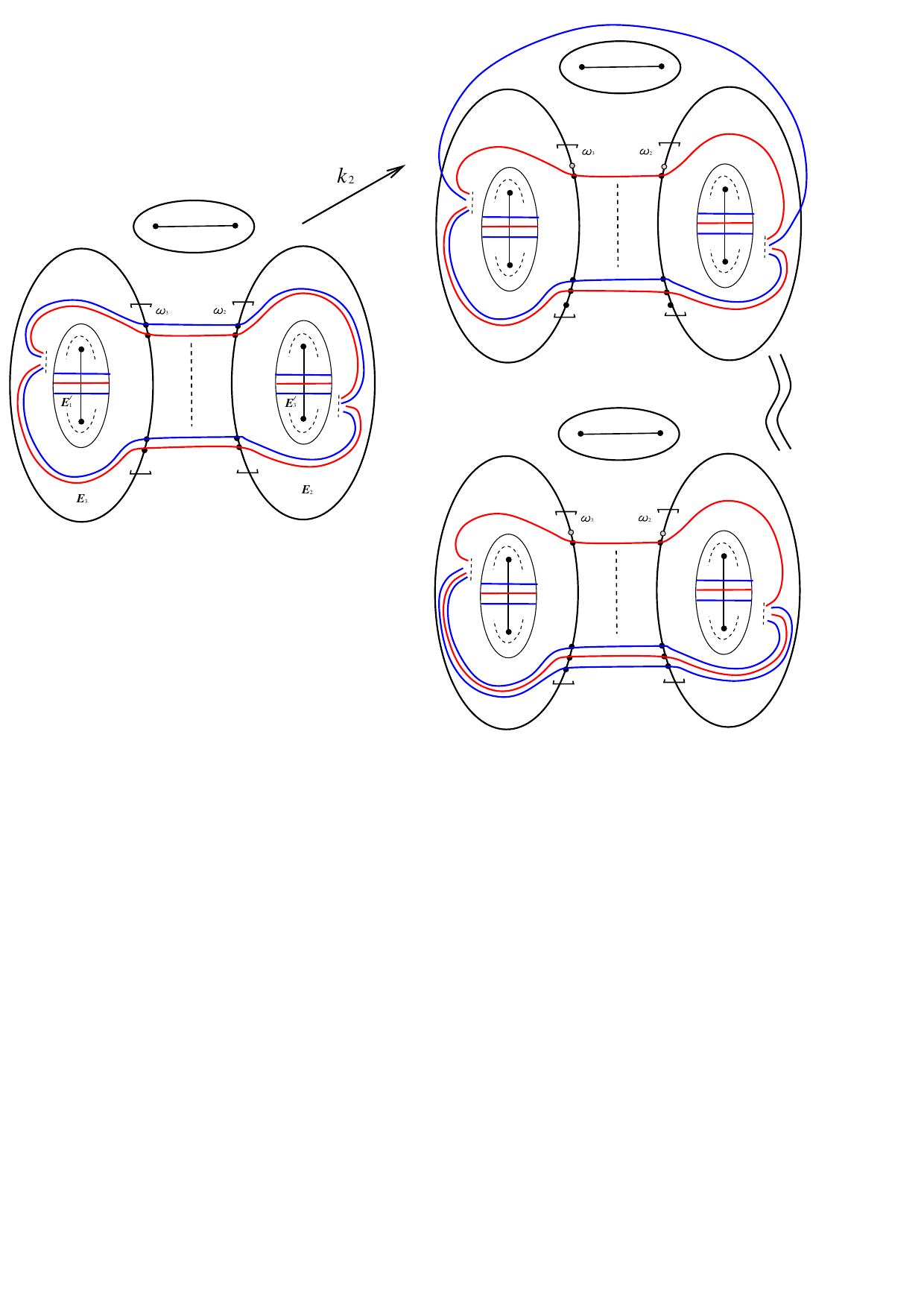}
\vskip -270pt
\caption{Normal jump move $k_2$ to the special case}\label{c36}
\end{figure}
\begin{proof}
Since $\boldsymbol{\beta}\cap \partial E_1=\emptyset$, $\beta_2\cup\beta_3$ meets only $\omega_2$ and $\omega_3$. Moreover, the four endpoints $3,4,5$ and $6$ of $\beta_2\cup\beta_3$ are in $E_1\cup E_3$. Therefore,  $|(\beta_2\cup\beta_3)\cap \omega_2|=|(\beta_2\cup\beta_3)\cap \omega_3|$. This implies that $p_2=p_3$.\\

Now, we claim that  $q_2+q_3$ is preserved for all minimal normal coordinates of $T$. Since  $\boldsymbol{\beta}$ is a normal coordinate of a rational $3$-tangle $T$ such that $\boldsymbol{\beta}\cap \partial E_1=\emptyset$, we can keep having a minimal normal coordinate of $T$ with respect to $\partial E_1$ by appying $k_1$ and $k_2$ alternatively. Moreover, the applying the normal jump moves preserves $p_2(=p_3)$. (Refer to Figure~\ref{c36}.)  By a similar argument of Lemma~\ref{T6}, each normal jump move changes $q_2$ and $q_3$ to $q_2\pm 1$ and $q_3\mp 1$. Therefore, $q_2+q_3$ would be preserved.

\end{proof}

\begin{Thm}
For a given rational $3$-tangle $T$, $\mathcal{N}(T)$ is contractible.

\end{Thm}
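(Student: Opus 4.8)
The plan is to use the weight $w(\boldsymbol{\beta})=|\boldsymbol{\beta}\cap\partial E_1|$ as a Morse‑type height function on $\mathcal{N}(T)$ and to flow the whole complex down onto the subgraph $\mathcal{N}_0\subseteq\mathcal{N}(T)$ spanned by the minimal normal coordinates of $T$ with respect to $\partial E_1$. Since $\mathcal{N}(T)$ is a connected graph — it is one–dimensional, and connectedness is Theorem~\ref{T0} — it suffices to produce a strong deformation retraction of $\mathcal{N}(T)$ onto $\mathcal{N}_0$ and to check separately that $\mathcal{N}_0$ is contractible.

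First I would establish contractibility of $\mathcal{N}_0$. By Theorem~\ref{T1} a normal coordinate lies in $\mathcal{N}_0$ exactly when $w$ attains its global minimum on it, so $\mathcal{N}_0$ is the ``valley floor''. If the minimum is strict, Corollary~\ref{C2} gives $\mathcal{N}_0=\{\mathrm{pt}\}$. Otherwise one runs through the cases settled above: when $\beta_i\cap\partial E_1\neq\emptyset$ for every $i$, Lemma~\ref{T3} forbids all three normal mappings from preserving $w$ at once, and Theorems~\ref{T4}, \ref{T5} and~\ref{T5-1} pin down $\mathcal{N}_0$ explicitly — its vertices are obtained from one another by alternately applying the standard normal mappings $k_1^1$ and $k_2^1$, their number is the finite count stated there, and consecutive ones are joined by a single edge; when some $\beta_i$ is trapped in a disk $E_j$, so that by Lemmas~\ref{T6} and~\ref{T6-1} the value of $w$ is constant along the available moves, the same alternating description exhibits $\mathcal{N}_0$ as an arc, parametrised by $q_1$ (resp.\ by $q_2-q_3$ when $\boldsymbol{\beta}\cap\partial E_1=\emptyset$), possibly bi‑infinite. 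In every case $\mathcal{N}_0$ is a point or an arc, hence contractible. One must also verify, using the trichotomy for normal jump moves extracted from Theorem~\ref{T-2} together with Lemma~\ref{L5}, that no vertex of $\mathcal{N}_0$ carries a further edge inside $\mathcal{N}_0$ that would close up a loop, so that ``arc'' is genuine and not ``circle''.

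Next I would build the descending retraction. Let $\boldsymbol{\beta}\notin\mathcal{N}_0$; then $\boldsymbol{\beta}$ is not minimal, so some standard normal mapping strictly lowers $w$, while by Lemma~\ref{L1} a second normal mapping of the same index, if present, only raises $w$, and by Lemma~\ref{L} the strictly lowering normal jump move of a prescribed index is unique. Fixing a rule — say, take the smallest index admitting a strict descent — therefore attaches to each $\boldsymbol{\beta}\notin\mathcal{N}_0$ a canonical descending edge. Because $w$ is $\mathbb{Z}_{\geq 0}$–valued and strictly drops along such edges, iteration terminates, and by Theorem~\ref{T1} it can only terminate at a vertex where no move lowers $w$, i.e.\ in $\mathcal{N}_0$. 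Processing these descending edges in order of decreasing $w$ is a sequence of elementary collapses — each chosen edge has a free end at the moment it is collapsed — and it retracts $\mathcal{N}(T)$ onto $\mathcal{N}_0$. A midpoint vertex lying on a $1$–cell $\{\beta_i,\beta_j\}$ of $\mathcal{B}(T)$ is adjacent only to normal coordinates that agree in the pair $\{\beta_i,\beta_j\}$ and differ by a normal jump move of the third arc, so it is swept along with one of these collapses and causes no trouble.

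The main obstacle I anticipate is making the descending flow genuinely simplicial: one must show that the chosen descending edges of two adjacent vertices are \emph{coherent} — after finitely many steps the two flow lines merge — so that the iterated collapse is well defined and introduces no new loop in the process. This is precisely where the uniqueness assertions of Lemmas~\ref{L} and~\ref{L1}, the ``some $k_i$ must strictly increase $w$'' content of Lemma~\ref{T3}, and the flat–region bookkeeping of Theorems~\ref{T5}, \ref{T5-1} and Lemmas~\ref{T6}, \ref{T6-1} have to be combined carefully. The symmetry analysis in the proof of Theorem~\ref{T1} — the discussion of the Type 1–3 configurations of coloured dots on $\omega_1$ and how they evolve under a composition of normal mappings — is the model for tracking the intersection pattern with $\omega_1$ along the flow and hence for verifying coherence.
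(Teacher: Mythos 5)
There is a genuine gap, and it sits exactly at the point your proposal defers. Since $\mathcal{N}(T)$ is one--dimensional and connectedness is already Theorem~\ref{T0}, contractibility is \emph{equivalent} to the absence of an embedded loop; the paper proves this head-on by taking a loop $\boldsymbol{v}_1,\dots,\boldsymbol{v}_n$ with a minimal number of vertices and deriving a contradiction (if the weight $|\boldsymbol{v}_i\cap\partial E_1|$ is non-constant along the loop, Theorem~\ref{T1} applied at a minimal vertex forbids returning to it; if it is constant, Lemmas~\ref{L0}, \ref{L-1} exclude the $k_j^2$ moves and Lemma~\ref{T6} shows the alternating $k_1^1,k_2^1$ moves change $q_1$ monotonically, so the loop cannot close). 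Your descending-retraction scheme does not establish this. The assertion that processing the chosen descending edges in order of decreasing $w$ is a sequence of elementary collapses ``each with a free end'' is false in the presence of a hypothetical cycle: a weight-maximal vertex of that cycle lies on at least two edges, so neither of them has a free vertex, and collapsing your one chosen descending edge leaves the cycle intact. More basically, a well-defined descending flow terminating in a connected, contractible valley floor $\mathcal{N}_0$ is perfectly compatible with $\mathcal{N}(T)$ containing a circle: two distinct descending paths from a common non-minimal vertex down to the arc $\mathcal{N}_0$ close up a loop. The ``coherence'' of flow lines that you flag as the anticipated obstacle is therefore not a technical verification to be added at the end --- it \emph{is} the theorem, and nothing in the preceding construction reduces it.

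A second, related omission: a loop all of whose vertices have the same weight $|\boldsymbol{\beta}\cap\partial E_1|$ is invisible to your height function, and this constant-weight case is where the paper spends most of its effort (its Case~2, with the two subcases governed by which $k_i$ preserve the weight). Your proposal analyzes the minimal level set $\mathcal{N}_0$ via Theorems~\ref{T4}, \ref{T5}, \ref{T5-1} and Lemmas~\ref{T6}, \ref{T6-1}, which is the right toolbox, but you use it only to show $\mathcal{N}_0$ is an arc, not to exclude a circle inside a level set; for that you need the strict monotonicity of the twisting parameter $q_1$ (resp.\ $q_2-q_3$) under the alternating moves, deployed as an obstruction to closing up, together with the exclusion of the second normal jump moves $k_j^2$ via Lemmas~\ref{L0} and~\ref{L-1}. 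As written, the proposal reorganizes the known lemmas around a Morse-theoretic picture but does not supply a proof of acyclicity.
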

\begin{proof}
We assume that there exists a loop in $\mathcal{N}(T)$ for a contradiction. Let $v_1, v_2,..,v_{n}$ be the distinct sequence of normal forms of $T$ for the loop such that $v_{i+1}$ is obtained from $v_i$ by a normal jump move and $v_1$ is obtained from $v_{n}$ by a normal jump move, where $1\leq i\leq n-1$ and $n>2$. We assume that the sequence has a minimal number of terms(vertices) to make a loop.  Then, by Lemma~\ref{L5}, we note that the previous term and the next term of each term are obtained by $(k_s)^{-1}$ and $k_t$ for distinct $s,t\in\{1,2,3\}$. If not, then we can reduce the number of terms to have a different loop. Let $\boldsymbol{v}_1,\boldsymbol{v}_2,...,\boldsymbol{v}_{n}$ be the corresponding normal coordinates of $v_1, v_2,..,v_{n}$ respectively. We want to point out that the bridge arcs in the same position of the normal coordinates have the same endpoints. 
 We note that there are two cases for this as follows.
\vskip 10pt
\textbf{Case 1} : $|\boldsymbol{v}_i\cap \partial E_1|\neq |\boldsymbol{v}_{j}\cap \partial E_1|$ for some $i,j\in\{1,2,...,n\}$.\vskip 10pt
If  $|\boldsymbol{v}_i\cap \partial E_1|\neq |\boldsymbol{v}_{j}\cap \partial E_1|$ for some $i,j\in\{1,2,...,n\}$ then we have $|\boldsymbol{v}_1\cap \partial E_1|<|\boldsymbol{v}_{j}\cap \partial E_1|$ or $|\boldsymbol{v}_1\cap \partial E_1|>|\boldsymbol{v}_{j}\cap \partial E_1|$  for some $j\in\{2,...,n\}$. Theorem~\ref{T1} then implies that $|\boldsymbol{v}_1\cap \partial E_1|<|k_s(\boldsymbol{v}_{n-1})\cap \partial E_1|=|\boldsymbol{v}_{1}\cap \partial E_1|$ or $|\boldsymbol{v}_1\cap \partial E_1|>|k_s(\boldsymbol{v}_{n-1})\cap \partial E_1|=|\boldsymbol{v}_{1}\cap \partial E_1|$ for some $s\in\{1,2,3\}$.
This makes a contradiction for the construction of the loop.
\vskip 10pt
\textbf{Case 2} : $|\boldsymbol{v}_1\cap \partial E_1|=|\boldsymbol{v}_{i}\cap \partial E_1|$ for all $i\in\{2,...,n\}$.\vskip 10pt

 We note that three bridge arcs of $\boldsymbol{v}_1$ should meet with $\partial E_1$. Otherwise,  $\boldsymbol{v}_1,\boldsymbol{v}_2,...,\boldsymbol{v}_{n}$ cannot make a loop. There is no $\boldsymbol{v}_k$ so that $k_s(\boldsymbol{v}_{k-1})=\boldsymbol{v}_{k}$ for some $k\in\{2,...,n\}$ if $\beta_s$ is disjoint with $\partial E_1$ since $|\boldsymbol{v}_k\cap \partial E_1|\neq |\boldsymbol{v}_{k-1}\cap \partial E_1|$ by Lemma~\ref{T6}. 
We have two subcases $(1)$ and $(2)$ for Case $2$  without loss of generality. We note that it is impossible to have the case that $|\boldsymbol{v}_1\cap\partial E_1|<|k_s(\boldsymbol{v}_1)\cap\partial E_1|$ for all $s\in\{1,2,3\}$ to have a loop. If there is a loop then every $\boldsymbol{v}_i$ needs to be obtained from  $\boldsymbol{v}_{i-1}$ by $k_j^2$ for some $j$. However, because of Lemma~\ref{L0}, the number of elements for this loop is at most two. This makes a contradiction. \vskip 10pt

$(1)$~ $|\boldsymbol{v}_1\cap\partial E_1|=|k_1(\boldsymbol{v}_1)\cap\partial E_1|$ and $|\boldsymbol{v}_1\cap\partial E_1|<|k_i(\boldsymbol{v}_1)\cap\partial E_1|$ if $i\in\{2,3\}$.\vskip 10pt

By the assumption of this case, we note that $k_1(\boldsymbol{v}_{n})=\boldsymbol{v}_{1}$ to have a loop. If $k_j^2(\boldsymbol{v}_{n})=\boldsymbol{v}_{1}$ for some $j\in\{2,3\}$ then $\boldsymbol{v}_{2}=k_1(\boldsymbol{v}_{1})$ by Lemma~\ref{L0}. However, $|\boldsymbol{v}_1\cap\partial E_1|\neq |k_1(\boldsymbol{v}_1)\cap\partial E_1|$ because of the argument in Lemma~\ref{L-1}.  So, $(k_1)^{-1}(\boldsymbol{v}_{1})=k_1^s(\boldsymbol{v}_{1})=\boldsymbol{v}_{n}$ for some $s\in\{1,2\}$. 
Let $v_n=\{\beta_1',\beta_2,\beta_3\}$ be the normal form of $\boldsymbol{v}_{n}$. If there exist subarcs of $\beta_1'\cup\beta_2\cup\beta_3$ connecting $E_i$ and $E_j$ for all distinct $i,j\in\{1,2,3\}$ then $k_1^1(\boldsymbol{v}_{n})=\boldsymbol{v}_{1}$ and $k_j^2(\boldsymbol{v}_{1})=\boldsymbol{v}_{2}$ for some $j\in\{2,3\}$. This implies that $k_1^1(\boldsymbol{v}_{1})=\boldsymbol{v}_{n}$. However, by the argument in Lemma~\ref{L0} and Lemma~\ref{L-1}, we note that $k_1^1(\boldsymbol{v}_{1})\neq \boldsymbol{v}_{n}$ if $\boldsymbol{v}_{1}$ has the normal mapping $k_j^2$. This makes a contradiction. Therefore, it is impossible to have a loop. \vskip 10pt

$(2)$~ $|\boldsymbol{v}_1\cap\partial E_1|=|k_i(\boldsymbol{v}_1)\cap\partial E_1|$ if $i=1,2$ and $|\boldsymbol{v}_1\cap\partial E_1|<|k_3(\boldsymbol{v}_1)\cap\partial E_1|$.

\vskip 10pt
We first note that $\boldsymbol{v}_2$ and $\boldsymbol{v}_n$  cannot be obtained from $\boldsymbol{v}_1$ by $k_j^2$ for some $j\in\{1,2,3\}$ by the argument mentioned above. This implies that  $\boldsymbol{v}_2$ and $\boldsymbol{v}_n$  are obtained from $\boldsymbol{v}_1$ by $k_1^1$ and  $k_2^1$, or $k_2^1$ and  $k_1^1$ respectively. Now, we focus on $\boldsymbol{v}_2$. By the previous argument in this case, we also note that $|\boldsymbol{v}_2\cap\partial E_1|=|k_i(\boldsymbol{v}_2)\cap\partial E_1|$ if $i=1,2$ and $|\boldsymbol{v}_2\cap\partial E_1|<|k_3(\boldsymbol{v}_2)\cap\partial E_1|$. By continuing this procedure, we note that  $\boldsymbol{v}_{k+1}$ is obatined from $\boldsymbol{v}_k$ by $k_1^1$ or $k_2^1$ alternatingly for all $k\in\{1,2,...,n-1\}$. However, by Lemma~\ref{T6}, we cannot make a loop from  $\boldsymbol{v}_1,\boldsymbol{v}_2,...,\boldsymbol{v}_{n}$. 
The  alternating mappings of $k_1$ and  $k_2$ keep increasing (or decreasing) the connecting pattern $q_1$ by $1$ as in the diagram of Figure~\ref{c37}.  Therefore, it is impossible to have a loop if it satisfies $(2)$ and this completes the proof of the statement that $\mathcal{N}(T)$ is contractible.
\end{proof}

 \section{The representative of normal coordinates of $T$}\label{s6}

 In this section, we  discuss how to assign the representative of  normal coordinates of a rationtal $3$-tangle $T$ with a certain rule. It completes the classification of rational $3$-tangles as an effectiveness of the contractibility of the normal complex of a rational tangle $T$. Let $\boldsymbol{\beta}=(\beta_1,\beta_2,\beta_3)$  be a minimal normal coordinate of a rational $3$-tangle $T$ with respect to $\partial E_1$. We want to point out  that $\boldsymbol{\beta}$ is in minimal general position in terms of $\partial E$. We note that there is no $k_{i}^2$ such that $|k_i^2(\boldsymbol{\beta})\cap \partial E_1|=|\boldsymbol{\beta}\cap \partial E_1|$ for any $i\in\{1,2,3\}$. If there exists $k_i^2$ with $\boldsymbol{\beta}$ then we have either $|\boldsymbol{\beta}\cap \partial E|<|k_i^2(\boldsymbol{\beta})\cap \partial E|$ or  $|\boldsymbol{\beta}\cap \partial E|>|k_i^1(\boldsymbol{\beta})\cap \partial E|$ by the diagrams of Figure~\ref{outlet2}. Then, by Lemma~\ref{L-1}, we also have  either $|\boldsymbol{\beta}\cap \partial E_1|<|k_i^2(\boldsymbol{\beta})\cap \partial E_1|$ or  $|\boldsymbol{\beta}\cap \partial E_1|>|k_i^1(\boldsymbol{\beta})\cap \partial E_1|$. This makes a contradiction. Therefore, $k_i$ means $k_i^1$ with $\boldsymbol{\beta}$  which is a minimal normal coordinate of $T$. 

\begin{figure}[htb]
\includegraphics[scale=.75]{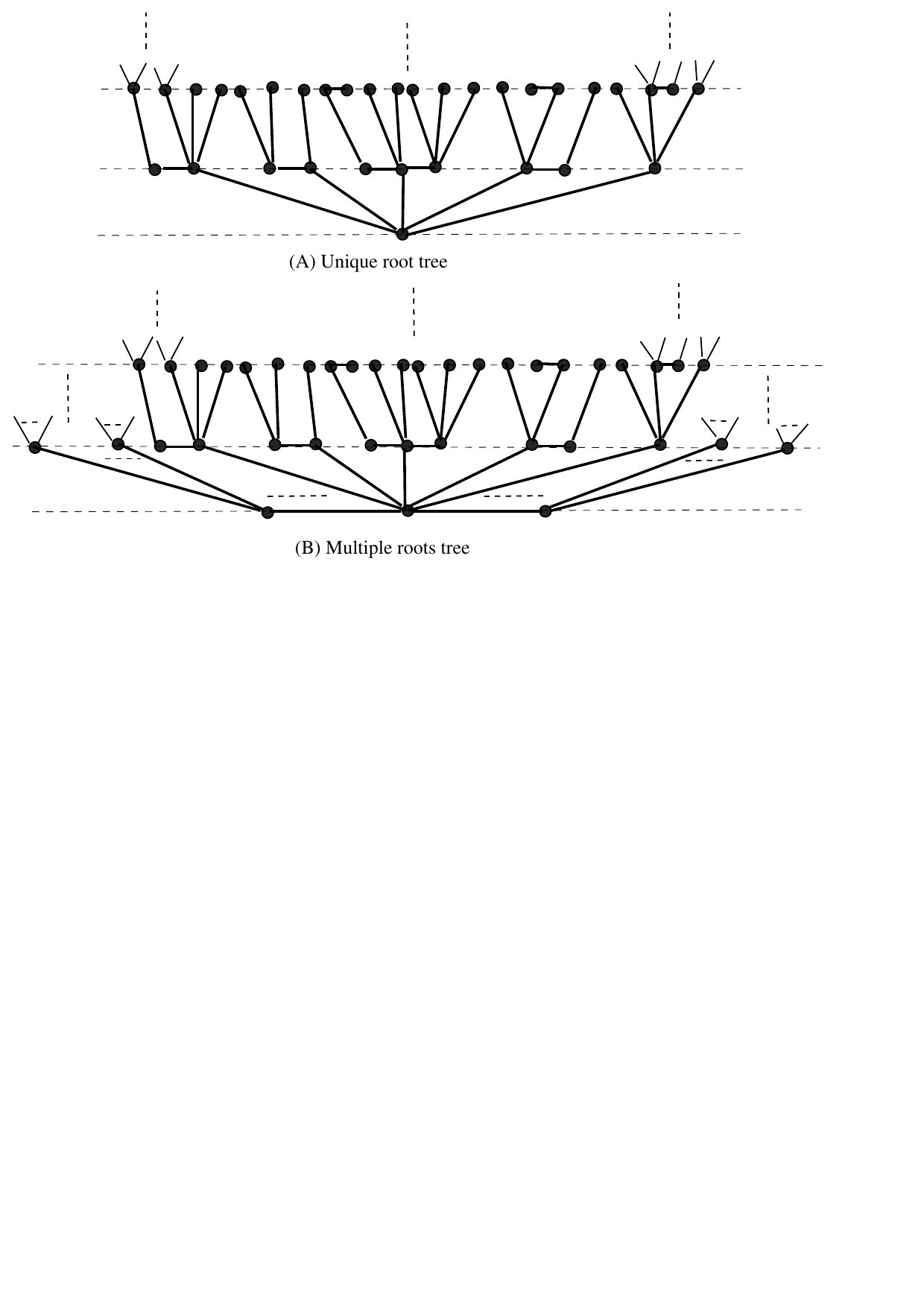}
\vskip -370pt
\caption{Hierarchy of trees for rational 3-tangles}\label{c9}
\end{figure}

First of all, we consider the case that $\boldsymbol{\beta}\cap \partial E_i= \emptyset$ for some $i$. Without loss of generality we assume that  $\boldsymbol{\beta}\cap \partial E_1= \emptyset.$  Recall that $E_1$ contains $\beta_1$ and $|(\beta_2\cup\beta_3)\cap \partial E_2|=|(\beta_2\cup\beta_3)\cap \partial E_3|$. Moreover, by Lemme~\ref{T6-1}, $p_2=p_3$ and $q_2+q_3=c$ for all minimal normal coordinates of $T$ with respect to $\partial E_1$, where $c$ is a fixed constant. In this case, we especially choose the minimal normal coordinate of $T$ so that $q_2=\displaystyle{\left[{(c+1)/ 2}\right]}$, where $[*]$ is the greatest integer function. Clear, $q_3=\displaystyle{\left[{c/ 2}\right]}$. We note that the choice is unique. So, this assigning is well-defined. \\

Secondly, we assume that $\boldsymbol{\beta}\cap \partial E_1\neq \emptyset.$ 
Then, we have the following three cases to assign the representative of $T$. Without loss of generality, we assume that a minimal normal coordinate $\boldsymbol{\beta}$ of $T$ with respect to $\partial E_1$ has the inequality that $|\boldsymbol{\beta}\cap \partial E_1|<|k_3(\boldsymbol{\beta})\cap \partial E_1|$  since it is impossible to have $|\boldsymbol{\beta}\cap \partial E_1|=|k_i(\boldsymbol{\beta})\cap \partial E_1|$ for all $i=1,2,3$ by Lemma~\ref{T3}.  Recall that the colors for $\beta_1,\beta_2$ and $\beta_3$ are blue, red and green respectively.\\

\textbf{Case 1 :}   $\boldsymbol{\beta} \cap \partial E_1\neq \emptyset$, $|\boldsymbol{\beta}\cap \partial E_1|<|k_i(\boldsymbol{\beta})\cap \partial E_1|$ for all $i=1,2,3$. \\

In this case, we assign the minimal normal coordinate $\boldsymbol{\beta}$ as the representative of normal coordinates of $T$. Since it is unique, it is well-defined.\\

\textbf{Case 2 :}   $\boldsymbol{\beta} \cap \partial E_1\neq \emptyset$, $|\boldsymbol{\beta}\cap \partial E_1|=|k_1(\boldsymbol{\beta})\cap \partial E_1|$ and $|\boldsymbol{\beta}\cap \partial E_1|<|k_i(\boldsymbol{\beta})\cap \partial E_1|$ for  $i=2,3$. \\

We note that $\boldsymbol{\beta}$ should have $\textbf{GRG}$ and $\textbf{GB-BG}$ type. (Refer to~Theorem~\ref{T5-1}.) We investigate $\textbf{GRG}$ type and $\textbf{GB-BG}$ type in terms of  the window $\omega_1$.  If the leftmost one is \textbf{GRG} type then we choose $\boldsymbol{\beta}$ as the representative of normal coordinates of $T$. Otherwise, we apply $k_1$ and $k_2$ alternatively to get all of minimal normal coordinates of $T$ with respect to $\partial E_1$ by Theorem~\ref{T5-1}. Among them, we choose the minimal normal coordinate of $T$ with respect to $\partial E_1$ which contains either the leftmost $\textbf{GRG}$ type or  the leftmost $\textbf{GB-BG}$ containing minimal intersections between the green dots. Recall that that the intersection number between a pair of consecutivce green dots for $\textbf{GB-BG}$ type during each process to find all of the minimal normal coordinates of $T$ with respect to $\partial E_1$ is decreased by $2$. Therefore, there exists unique minimal formal coordinate of $T$ with respect to $\partial E_1$ satisfying the condtion above. Therefore, the assigning is well-defined.\\

\textbf{Case 3:}    $\boldsymbol{\beta} \cap \partial E_1\neq \emptyset$, $|\boldsymbol{\beta}\cap \partial E_1|=|k_i(\boldsymbol{\beta})\cap \partial E_1|$ for $i=1,2$ and $|\boldsymbol{\beta}\cap \partial E_1|<|k_3(\boldsymbol{\beta})\cap \partial E_1|$.\\

By Theorem~\ref{T5}, all minimal normal coordinates of $T$ with respect to $\partial E_1$ have the same condition with $\boldsymbol{\beta}$ above. We have the two subcases as follows. \\

$\bullet$ \textbf{subcase 1:} $\beta_3\cap\partial E_1= \emptyset$. In this case, we assume that $\beta_j\not\in E_i$ for any $i, j\in\{1,2,3\}$ to exclude the case that $\boldsymbol{\beta}\cap \partial E_k=\emptyset$ for some $k$. We choose the minimal normal coordinate of $T$ with respect to $\partial E_1$ so that $q_1=0$. We note that it is unique by an argument in Lemma~\ref{T6} and this implies that the assigning is well-defined.\\

$\bullet$ \textbf{subcase 2:} $\beta_3\cap\partial E_1\neq \emptyset$. By Theorem~\ref{T5}, we note that $n_{\textbf{GRG}}=n_{\textbf{GBG}}=0$ and $n_{\textbf{GR-RG}}=n_{\textbf{GB-BG}}$. By an argument in Theorem~\ref{T1}, we note that every minimal normal coordinates of $T$ with respect to $\partial E_1$ have the same condition with $\boldsymbol{\beta}$. Moreover, they can be obtained from $\boldsymbol{\beta}$ by appying $k_1$ and $k_2$ alternatively for a certain number of times. Among them, we take the minimal normal coordinate of $T$ with respect to $\partial E_1$ so that the leftmost  ${\textbf{GR-RG}}$ or ${\textbf{GB-BG}}$ type has the minimal intersections between the two green dots. We note that it is unique by a similar argument in \textbf{Case 2} and this implies that the choice is well-defined.\\

 The following theorem   solves the last puzzle to classify rational $3$-tangles.
\begin{Thm}
Suppose that $\boldsymbol{\beta}=(\beta_1,\beta_2,\beta_3)$ and $\boldsymbol{\beta}'=(\beta_1',\beta_2',\beta_3')$  are the representatives of normal coordinates of $T$ and $T'$ with respect to $\partial E_1$ respectively with the regulation above. Let $(p_1, q_1, p_2, q_2, p_3, q_3)$ and $(p_1', q_1', p_2', q_2', p_3', q_3')$ be the Dehn's parameters of  $\boldsymbol{\beta}$ and $\boldsymbol{\beta}'$ respectively. Then $(p_1,q_1 , p_2, q_2, p_3, q_3)=(p_1', q_1', p_2', q_2', p_3', q_3')$ if and only if $T$ and $T'$ are isotopic.
\end{Thm}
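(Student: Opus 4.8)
The plan is to derive the theorem from two facts that are already in place, with essentially no new argument: (i) the recipe of Section~\ref{s6} assigns to each rational $3$-tangle a canonical representative normal coordinate, and it does so reading off only data that is intrinsic to the tangle together with the \emph{fixed} disks $E_1,E_2,E_3$, windows $\omega_i$, and puncture labels; and (ii) by Theorem~\ref{T-1} (together with the remark after it, that disjoint simple arcs are classified the same way) the ordered Dehn parameters classify the three disjoint arcs of a normal form up to isotopy of $\Sigma_{0,6}$ rel the six punctures, while — as recorded in the introduction — a collection of disjoint bridge arcs determines the rational $3$-tangle. The whole proof is then a matter of invoking these correctly.

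I would prove the ``only if'' implication first, as it is the lighter one. Suppose $(p_1,q_1,\dots,p_3,q_3)=(p_1',q_1',\dots,p_3',q_3')$. By the injectivity of $\phi$ in Theorem~\ref{T-1}, applied to the three disjoint arcs of a normal form, $\boldsymbol\beta$ and $\boldsymbol\beta'$ lie in the same isotopy class of ordered arc systems of $\Sigma_{0,6}$ rel the punctures; choose a homeomorphism $\psi$ of $\Sigma_{0,6}$, fixing the punctures and isotopic to the identity rel punctures, with $\psi(\boldsymbol\beta)=\boldsymbol\beta'$. Extending $\psi$ to a homeomorphism $\widehat\psi$ of $B$ gives $T=(B,\tau)\cong(B,\widehat\psi(\tau))$ as pairs, and $\widehat\psi(\tau)$ has arc system $\boldsymbol\beta'$; thus $(B,\widehat\psi(\tau))$ and $T'=(B,\tau')$ share an arc system and are therefore isotopic by the fact quoted in (ii). The one gap — that $\widehat\psi$ restricts on $\partial B$ to $\psi$, not to the identity — is closed by the standard collaring trick: since $\psi$ is isotopic to the identity rel punctures, it extends over a collar $\partial B\times[0,1]$ of $\partial B$ in $B$ to a homeomorphism that is the identity away from the collar and fixes $\tau$ pointwise (because it fixes the six punctures), and composing with this homeomorphism repairs the boundary. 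Hence $T$ and $T'$ are isotopic. Note that the representatives enter here only as a convenient way to name the $6$-tuples; any arc systems of $T$ and $T'$ with equal Dehn parameters would do.

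For the ``if'' implication, let $K\colon(B,\tau)\to(B,\tau')$ be a homeomorphism of pairs with $K|_{\partial B}=\mathrm{id}$. If $\{\beta_1,\beta_2,\beta_3\}$ is an arc system of $T$ with disjoint bridge disks $\{D_1,D_2,D_3\}$, then $\{K(D_1),K(D_2),K(D_3)\}$ are disjoint disks with $\partial K(D_i)=K(\tau_i)\cup\beta_i$ (using $K|_{\partial B}=\mathrm{id}$, so $K(\beta_i)=\beta_i$), so $\{\beta_1,\beta_2,\beta_3\}$ is an arc system of $T'$ as well; by symmetry $T$ and $T'$ have literally the same set of arc systems in $\Sigma_{0,6}$, with the same labelling (the labelling of a normal form being dictated by the six punctures, which $K$ fixes). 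Since this set, together with the fixed $E_1,E_2,E_3$ and $\omega_i$, determines the set of minimal normal coordinates with respect to $\partial E_1$ and every combinatorial quantity the Section~\ref{s6} rule reads off — which of $k_1,k_2,k_3$ realizes equality, the counts $n_{\textbf{GRG}},n_{\textbf{GBG}},n_{\textbf{GR-RG}},n_{\textbf{GB-BG}}$, the numbers $\xi_1,\xi_2$, the leftmost relevant type along $\omega_1$, the values of $q_1$ or of $q_2+q_3$ — and since that rule returns a \emph{unique} normal coordinate from exactly this data (Lemma~\ref{T3} and Theorems~\ref{T4}, \ref{T5}, \ref{T5-1} with Lemmas~\ref{T6}, \ref{T6-1}, resting on the uniqueness results Theorem~\ref{T1} and Corollary~\ref{C2}; and recalling that on a minimal normal coordinate $k_i$ is the standard $k_i^1$, as noted at the start of Section~\ref{s6}), applying it to $T$ and to $T'$ produces the same representative. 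Hence $\boldsymbol\beta$ and $\boldsymbol\beta'$ are isotopic and their Dehn parameters agree. The ``without loss of generality'' relabellings in Section~\ref{s6} introduce no choice, since which of $k_1,k_2,k_3$ plays each role is itself determined by $T$.

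I expect the substantive difficulty to lie not in this final argument but upstream, in the well-definedness of the Section~\ref{s6} recipe, which has already been carried out; the ``if'' direction above is precisely the assertion that that recipe is an isotopy invariant. Within the present proof the only step meriting care is the collaring argument in the ``only if'' direction — upgrading an isotopy of $\Sigma_{0,6}$ rel punctures to a homeomorphism of the $3$-ball that is the identity on all of $\partial B$, not merely on the punctures — and a single careful sentence there should suffice; the remainder is bookkeeping about which structures are fixed by a boundary-fixing homeomorphism.
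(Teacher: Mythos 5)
Your proposal is correct and takes essentially the same approach as the paper: the forward direction is exactly the paper's appeal to Dehn's parametrization classifying arc systems (which the paper declares ``clear'' and you flesh out with the collaring extension into $B$), and the reverse direction is the paper's observation that the representative-selection rule of Section 6 depends only on data intrinsic to the tangle and the fixed $E_i$, hence is an isotopy invariant, with uniqueness guaranteed by the same minimality and contractibility results you cite. There is no substantive difference in route, only in the level of detail supplied for the ``clear'' step.
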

\begin{proof}

It is clear that  if $(p_1,q_1 , p_2, q_2, p_3, q_3)=(p_1', q_1', p_2', q_2', p_3', q_3')$ then  $T$ and $T'$ are isotopic.\\

For the opppsite direction, we note that if $T$ and $T'$ are isotopic then their representatives of them with respect to $\partial E_1$ are the same by the arguments in \textbf{Case 1, Case 2} and \textbf{Case 3}. $\boldsymbol{\beta}'$  can be obtained from  $\boldsymbol{\beta}$ by a number of sequence of normal mappings, but  $\mathcal{N}(T)$ is contractible. This implies $(p_1,q_1 , p_2, q_2, p_3, q_3)=(p_1', q_1', p_2', q_2', p_3', q_3')$ and it completes the proof.

\end{proof}

 \section*{Acknowledgements}

 This research was supported by Basic Science Research Program through the National Research Foundation of Korea (NRF)  funded by the the Ministry of Education (RS-2023-00242412).


\begin{thebibliography}{1}
 
\bibitem{2} H. Cabrera,  {\em On the classification of rational 3-tangles},
J. Knot Theory Ramifications 12 (2003), no. 7, 921–946.

\bibitem{0} J. Conway, {\em An enumeration of knots and links, and some of their algebraic properties},
in Computational Problems in Abstract Algebra (Pergamon, Oxford, Press, 1970),
pp. 329–358.
\bibitem{1} B. Kwon, {\em An algorithm to classify rational $3$-tangles},
  J. Knot Theory Ramifications 24 (2015), no. 1, 1550004 (62 pages).
\bibitem{3} B. Kwon; J. H. Lee, {\em Normal form of rational $3$-tangles}, Topology and Its Applications, Volume 369 (2025), 109388



  


\end{thebibliography}
\end{document}